\def\ignore#1{}
\definecolor{mygreen}{RGB}{0,160,50}
\numberwithin{equation}{section}
\newtheorem*{rep@theorem}{\rep@title}
\newcommand{\newreptheorem}[2]{%
\newenvironment{rep#1}[1]{%
 \def\rep@title{#2 \ref{##1}}%
 \begin{rep@theorem}}%
 {\end{rep@theorem}}}
\DeclareMathOperator*{\esssup}{ess\,sup}
\newtheorem{theorem}{Theorem}[section]
\newtheorem{lemma}[theorem]{Lemma}
\newtheorem{proposition}[theorem]{Proposition}
\theoremstyle{definition}
\theoremstyle{remark}
\newcounter{thmenumerate}
\newcounter{xenumerate}
\newcommand{\m}[1]{\marginpar{\tiny{#1}}}
\newcommand\dev{\operatorname{\mathrm dev}}
\newcommand\var{\operatorname{\mathrm var}}
\newcommand\E{\operatorname{\mathbb E{}}}
\renewcommand\Pr{\operatorname{\mathbb P{}}}
\newcommand\Bi{\operatorname{Bi}}
\newcommand\eps{\varepsilon}
\renewcommand\phi{\varphi}
\newcommand\g{\gamma}
\def\ui{^{(1)}}
\def\ut{^{(2)}}
\def\uk{^{(k)}}
\newcommand\cA{\mathcal A}
\newcommand\cF{\mathcal F}
\newcommand\cG{\mathcal G}
\newcommand\cJ{\mathcal J}
\newcommand\cL{\mathcal L}
\newcommand\cN{\mathcal N}
\newcommand\wX{\widehat{X}}
\newcommand\wS{\widehat{S}}
\newcommand\wP{\widehat{P}}
\newcommand\wQ{\widehat{Q}}
\newcommand\wa{\widehat{a}}
\newcommand\wbeta{\widehat{\beta}}
\newcommand\weta{\widehat{\alpha}}
\renewcommand\P{{\mathbb P}}
\newcommand\px{p_{\rm exit}}
\newcommand\N{{\mathbb N}}
\newcommand\Z{{\mathbb Z}}
\newcommand\R{{\mathbb R}}
\def\Def{\ :=\ }
\def\tp{{\widetilde p}}
\newenvironment{proofof}[1]{\noindent {\bf
Proof of #1}.}{\hfill $\square$\par\smallskip\par}
\newcommand{\eqs}{\begin{eqnarray*}}
\newcommand{\ens}{\end{eqnarray*}}
\newcommand{\eq}{\begin{equation}}
\newcommand{\en}{\end{equation}}
\newcommand{\eqa}{\begin{eqnarray}}
\newcommand{\ena}{\end{eqnarray}}
\def\ex{{\mathbb E}}
\def\t{\tau}
\def\a{\alpha}
\def\th{\theta}
\def\Le{\ \le\ }
\def\HG{{\rm HG\,}}
\def\law{{\mathcal L}}
\def\tY{{\widetilde Y}}
\def\L{\Lambda}
\def\d{\delta}
\def\e{\varepsilon}
\def\Ref#1{(\ref{#1})}
\def\D{\Delta}
\def\Blb{\left\{}
\def\Brb{\right\}}
\def\dtv{d_{TV}}
\def\quarter{\tfrac14}
\def\half{\tfrac12}
\def\pr{{\mathbb P}}
\def\un{^{(n)}}
\def\giv{\,|\,}
\def\Eq{\ =\ }
\def\r{\rho}
\def\b{\beta}
\def\a{\alpha}
\def\g{\gamma}
\def\d{\delta}
\def\m{\mu}
\def\n{\nu}
\def\non{\nonumber}
\def\bx{{\mathbf x}}
\def\bu{{\mathbf u}}
\def\bv{{\mathbf v}}
\def\bc{{\mathbf c}}
\def\bm{{\mathbf m}}
\def\by{{\mathbf y}}
\def\bz{{\mathbf z}}
\def\bw{{\mathbf w}}
\def\bb{{\mathbf b}}
\def\bJ{{\mathbf J}}
\def\wxn{\wX\un}
\def\tipi{{\widetilde\pi}}
\def\ps{\psi}
\def\hcA{{\widehat\cA}}
\def\tr{{\rm Tr\,}}
\def\z{\zeta}
\def\wS{{\widehat S}}
\def\s{\sigma}
\def\wZ{{\widehat Z}}
\def\FF{{\mathcal F}}
\def\SSS{E}
\def\wtS{{\widetilde S}}
\def\adb{}
\def\grb{} 
\def\adbn{} 
\def\adbng{} 
\def\grbb{} 
\def\grbc{} 
\def\grbd{} 
\begin{document}
\title[Concentration and cut-off]
{Long-term concentration of measure and cut-off}


\author{A. D. Barbour} \thanks{ADB: Work
supported in part by Australian Research Council Grants Nos DP150101459 and DP150103588, and by the ARC Centre of Excellence for
Mathematical and Statistical Frontiers, CE140100049.  Thanks to the mathematics departments of the University of Melbourne and Monash University for their kind hospitality.}
\address{Institut f\"ur Mathematik, Universit\"at Z\"urich, Winterthurertrasse 190, CH-8057 Z\"urich}
\email{a.d.barbour@math.uzh.ch}

\author{Graham Brightwell}
\address{Department of Mathematics, LSE}
\email{g.r.brightwell@lse.ac.uk}

\author{Malwina Luczak} \thanks{MJL: Research partly supported by an EPSRC Leadership Fellowship EP/J004022/2 and partly by ARC Future Fellowship FT170100409.}
\address{School of Mathematics and Statistics, University of Melbourne}
\email{mluczak@unimelb.edu.au}

\keywords{Markov chains, concentration of measure, coupling, cut-off}
\subjclass[2000]{60J75, 60C05, 60F15}

\begin{abstract}
We present new concentration of measure inequalities for Markov chains, generalising results for chains that are contracting in Wasserstein distance.
These are particularly suited to establishing the cut-off phenomenon for suitable chains.
We apply our discrete-time inequality to the well-studied Bernoulli-Laplace model of diffusion,
and give a probabilistic proof of cut-off, recovering and improving the bounds of Diaconis and Shahshahani.
We also extend the notion of cut-off to chains with an infinite state space, and illustrate this in a second example,
of a two-host model of disease in continuous time.  We give a third example, giving concentration results for the supermarket model, illustrating
the full generality and power of our results.
\end{abstract}

\maketitle

\section{Introduction}\label{S:intro}

We have two main aims in this paper.  The first is to develop some new concentration of measure inequalities for Markov chains,
both in discrete and continuous time, and the second is to introduce a wider perspective on the cut-off phenomenon for convergence to equilibrium of Markov chains.
Our past work suggests a strong connection between long-term concentration of measure, rapid mixing, and cut-off: this paper is an attempt to formalise, explain
and illustrate this.

Our concentration of measure inequalities generalise and extend earlier results applicable for chains {\em contracting in Wasserstein distance}, which
means that there is a metric on the state space so that the chain makes only short steps with respect to the metric, and a coupling of two copies of
the chain so that the distance between the two copies decreases in expectation -- in the language of Ollivier~\cite{Ollivier}, this means that the chain has
{\em positive coarse Ricci curvature}.  For discrete-time Markov chains with positive coarse Ricci curvature, Ollivier proves that any real-valued function
of the Markov chain that is Lipschitz with respect to the metric remains well-concentrated around its expectation for all time, and in equilibrium; a
similar result follows from results of Luczak~\cite{l08} proved independently at around the same time.  Paulin~\cite{Paulin} gives a more general framework,
obtaining concentration results, and bounds on the mixing time, in cases where the ``multi-step coarse Ricci curvature'' is positive, even if the coarse
Ricci curvature is not.  The concentration results proved in these papers, as well as in the present paper, are of the ``Gaussian then exponential'' type, akin to
Bernstein's Inequalities:
the probability of deviations of at least $m$ from the mean is of order $e^{-cm^2}$ for small $m$ and $e^{-cm}$ for large $m$ -- Ollivier gives
examples where this is the best possible form of the concentration inequality.

Our new results in discrete time do not rely on the existence of a well-behaved metric on the state space, and require only conditions regarding the
function of interest.  Thus we obtain stronger concentration results for functions of the chain that evolve much more slowly
than the total transition rate of the chain, as long as they are contractive, in a suitable sense.  We recover essentially the same result as Ollivier
in the case of positive coarse Ricci curvature, and we can also obtain results very similar to those of Paulin, but our results can also be used to prove concentration
of measure in other settings.  The application we give in the final section of our paper gives a concentration result that we do not
know how to obtain by other means.

We also give analogous concentration inequalities for continuous-time Markov chains.  These are entirely new, although, for chains contracting
in Wasserstein distance, similar results could be obtained via the methods and results of Ollivier~\cite{Ollivier}, Luczak~\cite{l08} and Paulin~\cite{Paulin}.  Veysseire~\cite{Veysseire} gives definitions and results for coarse Ricci curvature in continuous time, but does not prove
any results that are closely related to ours.

We now turn to the cut-off phenomenon.  For a Markov chain
$(X(t))$, with initial state $X(0)=x$, consider the total variation distance between the law of the process at time~$t$ and the equilibrium
distribution.  The chain is said to exhibit the cut-off phenomenon if this distance falls from near~1 to near~0 over a window of time that is much
shorter than the mixing time.  In previous work, it is assumed that the state space is finite, and the starting state $x$ is chosen to maximise the
mixing time.  We present a version of the definition allowing for an infinite state space, and for variation of the mixing time over a region of
potential initial states, with a cut-off window of width that is uniform across this region.

Our concentration of measure inequalities, combined with coupling arguments, are well-suited to proving cut-off, and we illustrate this with two
examples of independent interest.  The first is the well-known Bernoulli-Laplace model of diffusion: there are initially $n$ red balls in one urn and
$n$ black balls in another, and at each time step one ball from each urn is chosen uniformly at random and the two balls are exchanged.  Cut-off
was proved for this model in 1987 by Diaconis and Shahshahani~\cite{ds87} using algebraic techniques: we provide a probabilistic proof, essentially recovering the
bound of Diaconis and Shahshahani for the upper tail of the distribution of the mixing time, while providing a sharper bound for the lower tail.

Our second application concerns a continuous-time model of a disease with two types of host, each infecting the other;
the disease is supported at a low level in a population by immigration of both types of infected host from outside.  This example illustrates both
the application of our new continuous-time concentration inequality and our new concept of cut-off, as the state space is infinite and the mixing
time varies significantly depending on the initial conditions.

In both of the sample applications above, the chain we examine is contractive in Wasserstein distance, and variants of the results we obtain
could also be obtained from concentration inequalities in earlier work.  We also present a third application which uses the full power of our new
continuous-time inequality; this treats the supermarket model, a well-known queueing system, with a certain range of parameter values.
In this example, we utilise facts about the equilibrium distribution from a paper of Brightwell, Fairthorne and Luczak~\cite{BFL}, alongside our long-term
concentration result, to show tight concentration in equilibrium of the number of empty queues.

\subsection{Concentration of measure inequalities}

Our general concentration inequality for discrete-time Markov chains appears as Theorem~\ref{thm.concb-general}, and the special case where the
chain is contracting in Wasserstein distance with respect to a suitable metric as Corollary~\ref{ADB-contract-disc-lem}.  Part~(a) of
Theorem~\ref{ADB-contract-disc-lem} is very similar to Theorem~32 of Ollivier~\cite{Ollivier} -- that result is for the equilibrium distribution of
the chain, whereas ours is for finite-time distributions, but Ollivier's Remark~34 indicates that the proof in his paper transfers to the finite-time
case.  A similar result for chains contracting in Wasserstein distance also follows readily from Theorem~4.5 of Luczak~\cite{l08}.  We give more details
after we have given precise definitions and statements of theorems.

There is another quite different recent strand of work providing tools to show concentration of measure and rapid mixing for a given function of a Markov chain, useful in
circumstances where the function mixes more rapidly than the chain itself.  See Watanabe and Hayashi~\cite{WH} and Rabinovitch, Ramdas, Jordan and Wainwright~\cite{rrjw}.

Results similar to Theorem~\ref{thm.concb-general} appear in earlier works of the third author, some unpublished, and a number of other
applications are to be found in these papers, as well as in Gheissari, Lubetzky and Peres~\cite{GLP}.
The flavour of the inequality is similar to that of Luczak~\cite{l08}, but Theorem~\ref{thm.concb-general} can be much more powerful when the chain makes
frequent transitions that do not alter the value of the function of interest.

One example where this is relevant is the supermarket model, as studied
in the final section of this paper, where the number of queues of length~$k$ only changes infrequently for some values of~$k$.

Another example is the alternative routing model of Gibbens, Hunt and Kelly~\cite{GHK}.  Here, there are links of limited capacity between each pair of
nodes in a phone network; requests for pairs of nodes to be connected arrive according to a Poisson process, and these can be met either by using the
direct link or by using some path of two links.  Different protocols have been proposed and studied for choosing the route; one such is to use the
direct link if it has spare capacity, and if not then to inspect $d\ge 1$ links of two routes, and use one of those with most spare capacity.  In an
unpublished preprint of Luczak~\cite{L2012}, an earlier version of Theorem~\ref{thm.concb-general} is used to prove a differential equation approximation for this model, extending earlier results of Crametz and Hunt~\cite{ch} and Graham and M\'el\'eard~\cite{gm}.  The equilibrium behaviour of the model
is studied via a similar approach in an unpublished preprint of Brightwell and Luczak~\cite{BL2013}.
The same methods can be used to treat other routing protocols.  The key principle is that quantities such as the number of occupied links incident with
a given node change far less often than the overall state of the network.
Our new result, Theorem~\ref{thm.concb-general}, improves on the earlier version in Luczak~\cite{L2012} (Theorem~2.3) by weakening and simplifying its hypotheses.

The corresponding inequality for continuous-time Markov chains is Theorem~\ref{thm.concb-continuous}, and the special case for chains that are
contracting in Wasserstein distance is Theorem~\ref{ADB-contract-cts-lem}.  Our proof for continuous time uses different methods to those used
for discrete time (although both proofs draw on principles of concentration of measure for martingales), and it is perhaps a little surprising that the resulting theorems are nearly exact analogues of each other.  In Brightwell and Luczak~\cite{BL2013},
a continuous-time model is analysed (somewhat awkwardly) by applying discrete-time concentration of measure inequalities from~\cite{L2012} to its
jump chain; it seems that this analysis would be eased by direct application of our new continuous-time inequalities, and we plan to produce an
improved version of~\cite{BL2013} in the future.

Our notion of contraction in Wasserstein distance is very different in flavour from that of contraction in total variation distance, as studied by
Marton~\cite{Marton96} and others subsequently.  In particular, for a chain to exhibit contraction in total variation distance, it is necessary that,
from any two states, there is a positive probability that two coupled chains started in these states coalesce in a single step.

\subsection{Cut-off}

We now discuss the cut-off phenomenon in the convergence to equilibrium for sequences~$X\un$ of \grb {Markov} chains.

Let $\law_{x}(X\un (t))$ denote the distribution of~$X\un$ when $X\un (0)=x$, and let $\pi\un$ be the equilibrium distribution of $X\un$.
Let \grbc{$S\un$} denote the state space of the chain $X\un$.

In earlier papers (for instance, Diaconis and Shahshahani~\cite{ds87} and Levin, Luczak and Peres~\cite{llp}), cut-off is defined as follows, in the
case where the state space $S\un$ is finite for each $n$.  The {\em worst-case distance to stationarity} for the chain $X\un$ at time $t$ is
$$
d_n(t) = \max_{x\in \grbc{S}\un} \dtv \Bigl(\law_x\bigl( X\un(t) \bigr) , \pi^{(n)} \Bigr),
$$
and the sequence $X\un$ of chains is said to exhibit cut-off at time $t_n$ with {\em window width} $w_n$ if $w_n = o(t_n)$ and
$$
\lim_{s \to \infty} \liminf_{n \to \infty} d_n(t_n - sw_n) =1; \quad \lim_{s \to \infty} \limsup_{n \to \infty} d_n(t_n + sw_n) =0.
$$
In other words, for a large constant $s$, at time $t_n + s w_n$, the chain $X\un$ is nearly in equilibrium, whatever the starting state; on the other hand,
there is a starting state $x \in \grbc{S}\un$ such that the chain $X\un$ starting from state $x$ is very far from equilibrium at time $t_n - sw_n$.

In many cases where cut-off, with window width $w_n$, can be proven, the situation is typically as follows, with a proof involving two separate arguments.
The state space has a metric, and the Markov chain makes jumps that are small with respect to this metric.
The equilibrium distribution is concentrated around some point $y$ (suitably scaled with $n$) in the state space.  If the chain is started at some
``distant'' point $x$, one shows that its trajectory is concentrated around its expectation, up until some time $t_n(x)$ when the expectation becomes
suitably close to $y$.  Once in the neighbourhood of $y$, one seeks a coupling with a copy of the chain
in equilibrium, where coalescence takes place in time of order $w_n$.  One example of such a proof was given by Levin, Luczak and Peres~\cite{llp}, and
our examples in Sections~\ref{DS} and~\ref{parasites} both illustrate this general approach.

Similar behaviour is often to be found in examples where the state space is infinite, and there is no ``most distant'' starting point from equilibrium.
For instance, in a population model, there may be no effective upper bound on the initial size of a population.  Thus we find it useful to introduce a
more general notion of cut-off, where the mixing time $t_n(x)$ depends on the initial state, but the window width $w_n$ is independent of the starting
state.  The proof scheme above can then be applied, provided we restrict the class of allowed initial states to exclude (a)~states $x$ too close to the
point $y$ around which the equilibrium is concentrated, where the ``travel time'' $t_n(x)$ from $x$ to $y$ will be of similar or smaller order to the
time $w_n$ required for coalescence of the coupled chains in the neighbourhood of $y$, and (b)~possibly also states $x$ extremely distant from
$y$, where the fluctuation in the travel time exceeds the window width $w_n$.

We now give our formal definition of cut-off, which extends the previous definition, and in particular allows for an infinite state space.
For~$\SSS_n$ a subset of the state space~$S\un$ of~$X\un$,
let $(t_n(x),\,x\in \SSS_n)$ be a collection of non-random times, and let $(w_n)$ be a sequence of numbers such
that $\lim_{n\to\infty}\inf_{x \in \SSS_n} t_n(x)/w_n = \infty$.    We say that~$X\un$ exhibits
{\em cut-off} at time $t_n(x)$ on~$\SSS_n$ with {\em window width}~$w_n$, if there exist (non-random)
constants~$(s(\e),\,\e>0)$ such that, for any $\e > 0$ and for all~$n$ large enough,
\eqa
   &&\dtv\Bigl(\law_{x}\bigl(X\un (t_n(x) - s(\e)w_n)\bigr),\pi^{(n)}\Bigr) \ >\ 1-\e, \non\\
   &&\dtv\Bigl(\law_{x}\bigl(X\un(t_n(x) + s(\e)w_n)\bigr),\pi^{(n)}\Bigr) \ <\ \e,
       \label{ADB-cutoff}
\ena
uniformly for all $x \in \SSS_n$.

In some examples, the travel time $t_n(x)$ can be taken not to depend on $x$, as long as $x \in \SSS_n$.
We say that~$X\un$ exhibits cut-off at~$t_n$ on~$\SSS_n$ with window width~$w_n$, for a sequence $(t_n,\,n\ge1)$,
if the $t_n(x)$ in the definition above can be set equal to $t_n$ for all $n$ and all $x \in \SSS_n$.
An illustration of this last concept comes in Section~\ref{DS}; the idea here is that the expected ``travel times'' from all suitably
distant starting states are nearly equal.


Our concentration of measure results are suited to showing that a Markov chain closely follows an almost deterministic trajectory until it reaches the neighbourhood
near where the equilibrium is concentrated.  In order to complete a proof of cut-off, one needs to show that convergence to equilibrium is rapid once
that neighbourhood has been reached.  Proposition~\ref{hitting} gives conditions guaranteeing that a Markov chain taking non-negative real values,
with a non-positive drift in all positive states, reaches~0 quickly with high probability.  This implies an upper bound on the
coalescence time for the two copies of the chain in a contracting coupling. 
We give such a result only in continuous time, and apply it in our continuous-time sample application in Section~\ref{parasites}.
Our proof of Proposition~\ref{hitting} is based on the proof of a discrete-time analogue appearing as Proposition~17.19 of Levin, Peres
and Wilmer~\cite{LPW}.  Our application in Section~\ref{DS} requires a sharper coupling result specific to the model; using some version of
Proposition~17.19 from~\cite{LPW} would give weaker bounds on the tail of the distribution of the mixing time.

\subsection{Applications}

We give three examples.  The first two feature chains that are contracting in Wasserstein distance, illustrating both our methods and the cut-off
phenomenon.  In the third example, we prove results about concentration of the equilibrium distribution by using the full strength
of our new concentration inequalities.

Section~\ref{DS} concerns the Bernoulli--Laplace model of diffusion, originally investigated in the context of cut-off
by Diaconis and Shahshahani~\cite{ds87}.
In this discrete-time model, there are two urns each containing $n$ balls, with $n$ red and $n$ black balls in
total: at each time step, one ball is chosen uniformly at random from each urn and the two are exchanged.  The state of the system after $r$ steps
is captured by the number $X(r)$ of red balls in the left urn, and one compares the distribution of $X(r)$ with the stationary distribution (which is
concentrated around $n/2$).  Diaconis and Shahshahani prove cut-off for $X(r)$ at time $\frac14 n \log n$ with window width $n$.  Indeed, their proof
establishes cut-off not only for the most distant starting states (where $X(0) = 0$ or $n$) but on any set
$E_n(\eps) = \{ j : |j-\frac n2| \ge \eps n\}$.  They also give specific exponential rates for the tail of the distribution of the mixing time.
The methods used by Diaconis and Shahshahani are algebraic: we give an alternative proof, using our concentration of measure results.  Our proof gives the same
exponential rate for the upper tail as in~\cite{ds87}, although our proof does not give information about the extreme end of the tail, where the
total variation distance between the distribution at time $r$ and the equilibrium distribution is below $n^{-1/2} \log^2 n$.  Our methods yield a doubly
exponential rate for the lower tail, improving on the results of Diaconis and Shahshahani.

In Section~\ref{parasites}, we consider a toy model of a subcritical two-host infection,
maintained by immigration of infectives from outside, at rates that are constant multiples of a scale parameter~$n$.
Our model is appropriate in circumstances where the number of infectives is small compared to the total population size,
and the expected number of infectives of each \grbc{type of host} satisfies a linear equation with a fixed point $n \bc \in \R_+^2$.
We consider an arbitrary starting state $\bx$ within an annular region $E_n(\z) = \{ \by : n \z \le | \by - n \bc |  \le n/\z \}$,
where $\z \in (0,1)$, and we show cut-off at $t_n(\bx)$ with window width~1 over this region.  Here the travel time $t_n(\bx)$ is bounded
between two constants times $\log n$, but varies over the region $E_n(\z)$, for any $\z \in (0,1)$.

In Section~\ref{supermarket}, we consider the supermarket model.  In this $n$-server queueing model, customers arrive according to a Poisson process
at rate $\lambda n$, where $\lambda<1$, and inspect $d\ge 1$ queues before joining a shortest queue among these~$d$.  The service time of each customer
is exponential of mean~1.  We consider a parameter regime where $\lambda$ tends to~1 as $1 - n^{-\alpha}$, and $d$ grows as $n^\beta$, where $\alpha$ and
$\beta$ are constants satisfying certain inequalities.  We choose the precise parameter range so that, as shown by Brightwell, Fairthorne and
Luczak~\cite{BFL}, the maximum queue length in equilibrium is~2 with high probability, and most queues have length exactly~2.  For this model, we study the
distribution of the number of empty queues, and show that it is concentrated within order $n^{\frac12(1-\beta)}$ of its mean $n^{1-\alpha}$.
The application is chosen to illustrate the power of our general results; most transitions of the chain do not affect the number of empty queues, so that our
methods give stronger concentration results than we are able to obtain by any other means.  The techniques we use will extend readily to other parameter ranges.

Further consequences of inequalities Theorem~\ref{thm.concb-general} and Theorem~\ref{thm.concb-continuous}
will be explored in future work.


\section{Concentration inequalities: discrete time}
\label{S:conc}



In this section, we first state and prove a general concentration of measure inequality designed for the analysis of discrete-time Markov chains,
generalizing results of Luczak~\cite{l08}.  We then show how to recover a version of a result of Ollivier~\cite{Ollivier} for
contracting chains, which is perhaps more appealing and still fairly widely applicable.  Next, we outline how to use the inequality when
we have a coupling of two copies of the chain which is ``approximately contracting'' in the function of interest.  Finally, we give a toy example to
illustrate the application of the inequalities.

\subsection{Main result}

Here and throughout, we use $\Z_+$ to denote the non-negative integers.
Let $X=(X(i))_{i \in \Z_+}$ be a discrete-time Markov chain with a discrete state space~$\grbb{S}$ and
transition probabilities $P(x,y)$ for $x,y \in S$.
We allow $X$ to be lazy; that is, we allow $P(x,x) > 0$ for $x \in S$.

For $x \in S$, we set
\eqa
   N(x) &:=& 
   \{y \in S: P(x,y) > 0\}.\non
\ena
For $k \in \Z_+$ and a function $f\colon S \to \R$, define the function $P^k f$ by
$$
     (P^k f)(x) \Def \E_x [f(X(k))],  \quad x \in S,
$$
whenever it exists,
where $\E_x$ and~$\P_x$ are used to denote conditional expectation and probability given $X(0) = x$.

\begin{theorem}
\label{thm.concb-general}
Let $P$ be the transition matrix of a discrete-time Markov chain $(X(i))_{i \in \Z_+}$ with
discrete state space $S$.
Let $\wtS$ be a subset of $S$.
Let $f\colon S \to \R$ be a function such that $(P^i f)(x)$ exists for all $x\in S$
and $i\in \Z_+$, and satisfying, for all $i\in\Z_+$,
\eqa
    \big|(P^i f)(x) - (P^i f)(y)]\big| \Le \beta,\quad
                x \in \wtS ,\ y \in N(x); \label{cond-gen-5}\\
    \sum_{y \in N(x)} P(x,y) \bigl((P^i f)(x) - (P^i f)(y)\bigr)^2 \Le \alpha_i, \quad
                x \in \wtS,  \label{cond-gen-4}
\ena
where $\beta$ and $(\alpha_i)_{i \in \Z_+}$ are  positive constants.
 Set $a_k := \sum_{i=0}^{k-1} \alpha_i$, $k\ge1$.
Define $A_k := \{X(i) \in \wtS \mbox{ for } 0 \le i \le k-1\}$,
the event that $(X(i))$ stays in $\wtS$ for the first $k-1$ steps.
Then, for all $x_0 \in \wtS$ and all $m \ge 0$,
$$
 \Pr_{x_0} \Bigl( \bigl \{ \bigl|f(X(k))-(P^k f)(x_0) \bigr|\ge m \bigr \} \cap A_k \Bigr)
   \Le 2e^{-m^2/(2a_k + 4\beta m/3)}.
$$
\end{theorem}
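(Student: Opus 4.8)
The plan is to realize $f(X(k)) - (P^k f)(x_0)$ as $M_k - M_0$ for a Doob-type martingale $(M_i)$ run \emph{along the time axis}, and then apply a Bernstein/Freedman-type tail bound for martingales with bounded increments and controlled conditional variances. Write $g_j \Def P^j f$ (which exists everywhere, by hypothesis) and set, for $0 \le i \le k$,
\[
  M_i \Def (P^{k-i}f)(X(i)) = g_{k-i}(X(i)),
\]
so that $M_0 = (P^kf)(x_0)$ and $M_k = f(X(k))$. Since $g_{k-i} = P g_{k-i-1}$, we have $\E_{x_0}[g_{k-i-1}(X(i+1)) \mid \mathcal F_i] = g_{k-i}(X(i))$ with $\mathcal F_i = \sigma(X(0),\dots,X(i))$; hence $(M_i)$ is an $(\mathcal F_i)$-martingale, and its increment
\[
  D_{i+1} \Def M_{i+1} - M_i = \sum_{y \in N(X(i))} P(X(i),y)\bigl(g_{k-i-1}(X(i+1)) - g_{k-i-1}(y)\bigr)
\]
is a convex combination of centred terms.

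Next I would handle the fact that the hypotheses \Ref{cond-gen-5}--\Ref{cond-gen-4} are assumed only on $\wtS$ by \emph{stopping}. Let $\tau \Def \min\{i \ge 0 : X(i) \notin \wtS\}$, so that $\{\tau > i\} = \{X(0),\dots,X(i) \in \wtS\} \in \mathcal F_i$, and consider the stopped martingale $(M_{i\wedge\tau})$, whose increments are $M_{(i+1)\wedge\tau} - M_{i\wedge\tau} = D_{i+1}\1_{\{\tau > i\}}$. On $\{\tau > i\}$ we have $X(i) \in \wtS$, while $X(i+1)$ and every $y$ with $P(X(i),y) > 0$ lie in $N(X(i))$; so \Ref{cond-gen-5} applied with function index $k-i-1$, together with the triangle inequality, gives $|g_{k-i-1}(X(i+1)) - g_{k-i-1}(y)| \le 2\beta$ for each such $y$, whence $|D_{i+1}\1_{\{\tau > i\}}| \le 2\beta$ almost surely. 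Likewise, since $g_{k-i}(X(i))$ is the $P(X(i),\cdot)$-average of $g_{k-i-1}$, the conditional variance is at most the conditional second moment about $g_{k-i-1}(X(i))$, so by \Ref{cond-gen-4},
\[
  \E_{x_0}\bigl[(D_{i+1}\1_{\{\tau > i\}})^2 \mid \mathcal F_i\bigr] \Le \1_{\{\tau > i\}}\sum_{y \in N(X(i))} P(X(i),y)\bigl(g_{k-i-1}(y) - g_{k-i-1}(X(i))\bigr)^2 \Le \alpha_{k-i-1},
\]
and summing over $0 \le i \le k-1$ gives total conditional variance at most $\sum_{j=0}^{k-1}\alpha_j = a_k$. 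Since the stopped increments are bounded, no integrability issue arises.

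The final step is the routine exponential-supermartingale computation. For $0 < \theta < 3/(2\beta)$ and a centred $D$ with $|D| \le 2\beta$ and $\E[D^2 \mid \mathcal F_i] \le v$, the standard Bennett estimate gives $\E[e^{\theta D} \mid \mathcal F_i] \le \exp(v\,\psi(\theta))$ with $\psi(\theta) = \tfrac{\theta^2/2}{1 - 2\beta\theta/3}$; applying this to $D = D_{i+1}\1_{\{\tau > i\}}$ and iterating with the tower property yields $\E_{x_0}[e^{\theta(M_{k\wedge\tau} - M_0)}] \le \exp(a_k\,\psi(\theta))$, and the same with $\theta$ replaced by $-\theta$. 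Markov's inequality then gives $\Pr_{x_0}(\pm(M_{k\wedge\tau} - M_0) \ge m) \le \exp(-\theta m + a_k\psi(\theta))$, and optimising over $\theta \in (0, 3/(2\beta))$ produces the bound $\exp\bigl(-m^2/(2a_k + 4\beta m/3)\bigr)$; a union bound over the two signs costs the factor $2$. Finally, on $A_k = \{\tau \ge k\}$ we have $k \wedge \tau = k$, so $M_{k\wedge\tau} = f(X(k))$ while $M_0 = (P^kf)(x_0)$; thus $\{|f(X(k)) - (P^kf)(x_0)| \ge m\} \cap A_k \subseteq \{|M_{k\wedge\tau} - M_0| \ge m\}$, and the theorem follows.

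The main obstacle is precisely the matching of the $\wtS$-restricted hypotheses to the event $A_k$: one must introduce $\tau$ with the correct indexing, so that $\{\tau > i\}$ forces $X(i) \in \wtS$ (the state at which \Ref{cond-gen-5}--\Ref{cond-gen-4} must be evaluated to control the $(i+1)$-st increment), and then check that the increment and conditional-variance bounds for the \emph{stopped} process hold unconditionally. Once this set-up is in place, the concentration estimate is the classical Freedman/Bernstein argument, which could equally well be quoted from the literature.
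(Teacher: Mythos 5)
Your proposal is correct and follows essentially the same route as the paper: the same Doob martingale $Z_i=(P^{k-i}f)(X(i))$ along the time axis, the same increment bound $2\beta$ obtained by passing through $N(X(i))$ with the triangle inequality, the same conditional-variance bound $\alpha_{k-i-1}$ from \Ref{cond-gen-4}, and the same Bernstein/Freedman exponential-supermartingale optimisation yielding $2\exp\bigl(-m^2/(2a_k+4\beta m/3)\bigr)$. The only (harmless) difference is how the restriction to $A_k$ is localised: you stop the martingale at $\tau=\min\{i\ge0: X(i)\notin\wtS\}$ and bound the stopped increments unconditionally, whereas the paper keeps the unstopped martingale and inserts the indicator of the good event into a two-sided extension of McDiarmid's inequality (its Lemma~\ref{thm.mart-b}); both devices control exactly the same quantities and give the bound on the same event.
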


The conditions of the theorem are what is needed to fit into the framework of bounded differences (Bernstein-like) inequalities, and the expression in the assumption on~$f$
is, as we shall see, exactly what emerges when we bound conditional variances.

Evidently $a_k$ increases with~$k$.  Under a contractivity assumption, as we shall see shortly, the $\alpha_i$ can be taken to tend to~0 exponentially, so that the
$a_k$ are uniformly bounded: this means that we have a concentration of measure bound that is uniform in~$k$.  The result can also be applied in circumstances where the
$\alpha_i$ either converge more slowly to~0, or increase not too rapidly: in these cases, we obtain tighter concentration of $f(X(k))$ for smaller values of~$k$.

Theorem~\ref{thm.concb-general} improves on Theorem~4.5 of Luczak~\cite{l08} by using~\Ref{cond-gen-4}
to define~$\a_i$, instead of the cruder bound
\[
    L^2 \sum_{y \in N(x)} P(x,y) W\bigl(\law_x(X(i)),\law_y(X(i))\bigr)^2,
\]
where $f$ is assumed to be a Lipschitz function with Lipschitz constant~$L$, and~$W$ denotes the Wasserstein distance (both defined
with respect to the same metric on the state space $S$).
This is particularly important in contexts in which~$f(X(i))$ evolves significantly more
slowly than~$X(i)$ itself, because many of the transitions of~$X$ do not change the value of~$f$.
An example where this is relevant is the supermarket model, discussed in the final section of this paper, as well as the alternative routing model
of Gibbens, Hunt and Kelly~\cite{GHK} and its generalisation, as studied in Brightwell and Luczak~\cite{BL2013}.  (These particular examples are set up as continuous-time
Markov chains, for which our companion inequality,
Theorem~\ref{thm.concb-continuous}, is more naturally applicable, though it is also natural to consider their discrete-time analogues.)
Theorem~\ref{thm.concb-general} also improves on Theorem~2.3 of Luczak~\cite{L2012}, by weakening and
simplifying its hypotheses.

In the case where the hypotheses of Theorem~\ref{thm.concb-general} are satisfied with $\wtS = S$, we can immediately derive a bound on the variance of
$f(X(k))$, valid for any fixed starting state $x_0$.  Indeed, we have
\begin{eqnarray}
\var(f(X(k)) &=& \int_{r=0}^\infty \Pr_{x_0} \Big( \big(f(X(k)) - (P^k f)(x_0) \big)^2 \ge r \Big) \, dr \nonumber \\
&\le& 2 \int_{r=0}^\infty \exp \Big( \frac{-r}{2a_k + 4\beta\sqrt r/3} \Big) \, dr  \nonumber\\
&\le& 2 \int_{r=0}^\infty e^{-r/4a_k} + e^{-3\sqrt r/8\beta} \, dr \nonumber \\
&=& 2 (4a_k + 128\beta^2/9) \le 8a_k + 29 \beta^2. \label{variance}
\end{eqnarray}


\subsection{Proof of Theorem~\ref{thm.concb-general}}

To prove Theorem~\ref{thm.concb-general}, we use a slight extension of a result of McDiarmid~\cite{cmcd98}.
Inequality~\eqref{ineq-cmcd-1} in Lemma~\ref{thm.mart-b} below is a `two-sided' version of inequality~(3.28)
in Theorem 3.15 of McDiarmid~\cite{cmcd98}; inequality~\eqref{ineq-cmcd-2} is a slight extension of inequality~(3.29) of McDiarmid~\cite{cmcd98},
in that we work with a non-deterministic bound on $|Z_i - Z_{i-1}|$, and is also two-sided.

For a square integrable random variable $Y$ and
a $\sigma$-field $\cG \subseteq \cF$, we use $\var (Y \mid \cG)$ to denote
the {\it conditional variance} of $Y$ on $\cG$.
\ignore{
is defined to be the $\cG$-measurable random variable given by
$$
    \var (Y \mid \cG) \Def \E \Big (\big(Y - \E (Y \mid \cG)\big)^2 \mid \cG \Big ).
$$
}


\begin{lemma}
\label{thm.mart-b}
Let $(\Omega, \cF, \P)$ be a probability space equipped with a filtration
$\{\emptyset, \Omega \} = \cF_0 \subseteq \cF_1 \subseteq \cdots \subseteq \cF_k$
in $\cF$. \ignore{, where $\cF_k$ is finite.}
Let $Z$ be an $\cF_{k}$-measurable random variable with $\E Z = \mu$,
and let $Z_i = \E (Z \mid \cF_i)$, for $i=0, \dots, k$.
Let $\gamma$ and $\delta$ be constants such that $\sum_{i=1}^k \var(Z_i \mid \cF_{i-1})(\omega) \le \delta$ a.s.\
and $|Z_i(\omega) - Z_{i-1}(\omega)| \le \gamma$ a.s.\ for all $i=1, \dots, k$.
Then for any $m \ge 0$,
\begin{equation}
\label{ineq-cmcd-1}
  \P \big(\big|Z-\mu\big|\ge m\big) \Le 2e^{-m^2/(2 \delta+ 2\gamma m/3 )}.
\end{equation}

More generally, the following holds. For $\delta, \gamma \ge 0$, let
$$
   A(\delta, \gamma) \Def \Bigl\{\sum_{i=1}^k \var(Z_i \mid \cF_{i-1}) \le \delta\Bigr\}
     \cap \bigl\{\big|Z_i-Z_{i-1}\big| \le \gamma,\, 1\le i \le k\bigr\}.
$$
For any $m \ge 0$ and any values $\delta, \gamma \ge 0$,
\begin{eqnarray}
  \P \Bigl(\big\{\big|Z-\mu \big|\ge m \big\} \cap A (\delta, \gamma)
\Bigr)
        \Le 2e^{-m^2/(2\delta+ 2\gamma m/3)}. \label{ineq-cmcd-2}  
\end{eqnarray}
\end{lemma}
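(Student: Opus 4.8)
\textbf{Proof plan for Lemma~\ref{thm.mart-b}.}
The plan is to prove the one-sided tail bound
$$
\P\bigl(\{Z-\mu\ge m\}\cap A(\delta,\gamma)\bigr)\Le e^{-m^2/(2\delta+2\gamma m/3)},
$$
and then deduce \eqref{ineq-cmcd-2} by applying it also to the martingale $(-Z_i)$ --- which has the same conditional variances $\var(-Z_i\mid\cF_{i-1})=\var(Z_i\mid\cF_{i-1})$ and the same increments $|{-Z_i}+Z_{i-1}|=|Z_i-Z_{i-1}|$, hence the very same event $A(\delta,\gamma)$ --- and taking a union bound, which produces the factor $2$. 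Inequality \eqref{ineq-cmcd-1} is then the special case in which the a.s.\ hypotheses force $A(\delta,\gamma)=\Omega$ up to a null set. (The cases $\gamma=0$, $\delta=0$ and $m=0$ are degenerate and checked directly, so one may assume $\gamma,\delta,m>0$.)

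Write $D_i\Def Z_i-Z_{i-1}$; since $(Z_i)$ is a martingale, $\E(D_i\mid\cF_{i-1})=0$, and $V_i\Def\var(Z_i\mid\cF_{i-1})=\E(D_i^2\mid\cF_{i-1})$ is $\cF_{i-1}$-measurable. Fix $h\in(0,3/\gamma)$ and set $\tilde c\Def\frac{h^2/2}{1-h\gamma/3}$. The elementary one-variable bounds $e^t\le 1+t+t^2/2$ for $t\le0$ and $e^t\le 1+t+\frac{t^2/2}{1-t/3}$ for $0\le t<3$ combine, on setting $t=hx$, to give the \emph{one-sided} estimate $e^{hx}\le 1+hx+\tilde c\,x^2$ for every $x\le\gamma$. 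Consequently, using $\E\bigl(D_j\1\{D_j\le\gamma\}\mid\cF_{j-1}\bigr)=-\E\bigl(D_j\1\{D_j>\gamma\}\mid\cF_{j-1}\bigr)\le0$ (as $\gamma\ge0$) and $\E\bigl(D_j^2\1\{D_j\le\gamma\}\mid\cF_{j-1}\bigr)\le V_j$,
$$
\E\bigl(\1\{D_j\le\gamma\}\,e^{hD_j}\,\big|\,\cF_{j-1}\bigr)\Le \P(D_j\le\gamma\mid\cF_{j-1})+\tilde c\,V_j\Le e^{\tilde c V_j}.
$$

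Now define $Y_0\Def1$ and, for $1\le j\le k$,
$$
Y_j\Def\prod_{i=1}^{j}\1\{D_i\le\gamma\}\,\1\Bigl\{\sum_{l=1}^{i}V_l\le\delta\Bigr\}\,e^{hD_i-\tilde c V_i}.
$$
Since $Y_j=Y_{j-1}\cdot\1\{D_j\le\gamma\}\,\1\{\sum_{l\le j}V_l\le\delta\}\,e^{hD_j-\tilde c V_j}$, and $Y_{j-1}$, $\1\{\sum_{l\le j}V_l\le\delta\}$ and $V_j$ are all $\cF_{j-1}$-measurable, the moment bound above yields $\E(Y_j\mid\cF_{j-1})\le Y_{j-1}$; thus $(Y_j)$ is a non-negative supermartingale with $\E Y_k\le Y_0=1$. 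On $A(\delta,\gamma)$ every indicator equals $1$ and $\sum_{l\le k}V_l\le\delta$, so there $Y_k=e^{h(Z-\mu)-\tilde c\sum_{l\le k}V_l}\ge e^{h(Z-\mu)-\tilde c\delta}$, i.e.\ $e^{h(Z-\mu)}\1_{A(\delta,\gamma)}\le e^{\tilde c\delta}Y_k$. Hence, since $\1\{Z-\mu\ge m\}\le e^{h(Z-\mu-m)}$ for $h>0$,
$$
\P\bigl(\{Z-\mu\ge m\}\cap A(\delta,\gamma)\bigr)\Le \E\bigl(e^{h(Z-\mu-m)}\1_{A(\delta,\gamma)}\bigr)= e^{-hm}\E\bigl(e^{h(Z-\mu)}\1_{A(\delta,\gamma)}\bigr)\Le e^{-hm+\tilde c\delta}\E Y_k\Le e^{-hm+\tilde c\delta}.
$$
Choosing $h=\dfrac{m}{\delta+\gamma m/3}\in(0,3/\gamma)$ makes $-hm+\tilde c\delta=-\dfrac{m^2}{2\delta+2\gamma m/3}$, which is the required one-sided bound.

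The only genuine obstacle is that the increment bound $|D_i|\le\gamma$ is imposed only on the random set $A(\delta,\gamma)$, and $\{D_i\le\gamma\}$ is $\cF_i$- but not $\cF_{i-1}$-measurable, so it cannot be handled by a predictable stopping time in the usual Freedman-type way. The device above circumvents this by carrying the indicators $\1\{D_i\le\gamma\}$ inside the supermartingale $Y_j$ and invoking the one-sided exponential estimate, for which the martingale-difference property of $D_j$ alone --- and not any two-sided bound on $D_j$ --- suffices to control $\E(\1\{D_j\le\gamma\}e^{hD_j}\mid\cF_{j-1})$. Everything else is routine: the variance localisation is taken care of by the predictable indicator $\1\{\sum_{l\le i}V_l\le\delta\}$, the passage to the lower tail is by symmetry, and the final line is a one-parameter optimisation.
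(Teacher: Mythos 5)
Your proposal is correct, but it takes a genuinely different route from the paper's. The paper proves the lemma by inserting the indicator $I$ of $A(\delta,\gamma)$ into McDiarmid's machinery: his Lemma~3.16 bounds $\E\bigl(Ie^{h(Z-\mu)}\bigr)$ by an essential supremum of the product of conditional moment generating functions, each controlled via $\E(e^{hY_i}\mid\cF_{i-1})\le e^{h^2 g(h\dev_i^+)\var_i}$ with $g(x)=(e^x-1-x)/x^2$ (his Lemma~2.8), giving the Bennett-type exponent $h^2g(h\gamma)\delta$, which is then optimised as in his Theorem~2.7 to reach the stated Bernstein form. You instead construct the explicit bounded supermartingale $Y_j$ carrying the non-predictable truncation indicators $\1\{D_i\le\gamma\}$ together with the predictable indicators $\1\{\sum_{l\le i}V_l\le\delta\}$, and control $\E\bigl(\1\{D_j\le\gamma\}e^{hD_j}\mid\cF_{j-1}\bigr)$ by the one-sided estimate $e^{hx}\le 1+hx+\tilde c\,x^2$ for $x\le\gamma$, which uses only $\E(D_j\mid\cF_{j-1})=0$ and no lower bound on the increments; this is closer in spirit to Freedman-style truncation arguments. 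What your route buys is self-containedness and a fully explicit treatment of exactly the delicate point you flag: the event $\{|Z_i-Z_{i-1}|\le\gamma\}$ is $\cF_i$- rather than $\cF_{i-1}$-measurable, so on $A(\delta,\gamma)$ it constrains the realised increment but not the conditional quantity $\dev_i^+=\esssup(Y_i\mid\cF_{i-1})$, whereas the paper's chain $\esssup\bigl(I\sum_i g(h\dev_i^+)\var_i\bigr)\le g(h\gamma)\delta$ is really the asserted ``slight extension'' of McDiarmid's inequality to non-deterministic increment bounds; your truncation device proves that extension directly rather than appealing to it. What the paper's route buys is brevity, given McDiarmid's lemmas, and (before the final weakening) the marginally sharper Bennett exponent. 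Your explicit optimal choice $h=m/(\delta+\gamma m/3)$, the passage to the lower tail by applying the one-sided bound to $(-Z_i)$ (which leaves $A(\delta,\gamma)$ unchanged) with a union bound, and the derivation of \eqref{ineq-cmcd-1} from \eqref{ineq-cmcd-2} when the hypotheses hold almost surely are all correct.
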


The proof is that of Theorem 3.15 (inequalities~(3.28) and (3.29)) in McDiarmid~\cite{cmcd98}, except that we use the indicator of the
event~$A(\delta,\gamma)$ instead of the event  $\Bigl\{\sum_{i=1}^k \var(Z_i \mid \cF_{i-1}) \le \delta \Bigr\}$.  The proof is rather like a
stopping argument, avoiding some technicalities.

\begin{proof}
Following McDiarmid~\cite{cmcd98}, we use Lemma 3.16~\cite{cmcd98}, which is as follows.  If $(Y_i)$ is a martingale difference sequence with
respect to a filtration $(\cF_i)$, where each $Y_i$ is bounded above, if~$I$ is an indicator random variable,
and if~$h$ is a real number, then
$$
   \E \Bigl(I e^{h \sum_{i=1}^k Y_i} \mid \cF_0 \Bigr)
          \Le \esssup \Bigl(I \prod_{i=1}^k \E (e^{hY_i} \mid \cF_{i-1}) \mid \cF_0 \Bigr).
$$
(The statement in~\cite{cmcd98} involves the supremum instead of the essential supremum: the notionally stronger
version is obtained by changing the $Y_i$ on a set of measure~0.  The proof is fairly straightforward by induction over a single-step
inequality.)

Now, for any random variable $X$ such that $X \le b$ and $\E X=0$,
we have $\E (e^X) \le e^{g(b) \var X}$, where $g(x) := (e^x-1-x)/x^2$ (see Lemma 2.8 in McDiarmid~\cite{cmcd98}).
So, for any $h$, defining the (possibly infinite) $\cF_{i-1}$ random variables $\var_i := \var (Z_i \mid \cF_{i-1})$
and $\dev_i^+ := \esssup (Z_i - Z_{i-1}\mid \cF_{i-1})$, we have
$$
   \E (e^{h(Z_i - Z_{i-1})} \mid \cF_{i-1} ) \Le e^{h^2 g(h \dev_i^+) \var_i}.
$$

Let $I$ be the indicator of the event $A(\delta, \gamma)$. It then follows that
\begin{eqnarray*}
   \E (I e^{h (Z - \mu)} ) & \le & \esssup \Bigl(I \prod_{i=1}^k e^{h^2 g(h \dev_i^+) \var_i}\Bigr) \\
      &\le& e^{h^2 \esssup \bigl(I \sum_{i=1}^k g(h \dev_i^+) \var_i\bigr)} \Le  e^{h^2 g(h \gamma) \delta}.
\end{eqnarray*}
Hence
\begin{eqnarray*}
\P ( \{Z- \mu \ge m \} \cap A(\delta,\gamma) ) & = & \P (I e^{h(Z-\mu)} \ge e^{hm} ) \\
 &\le& e^{-hm} \E (I e^{h(Z-\mu)}) \Le e^{-hm + h^2 g(h \gamma) \delta}.
\end{eqnarray*}
Optimising in~$h$, we set $h = \frac{1}{\gamma} \log(1 + \frac{m \gamma}{\delta})$ and use the
inequality $(1+x) \log (1+x) -x \ge x^2/(2+2x/3)$ for $x \ge 0$, as in the proof of Theorem~2.7 in McDiarmid~\cite{cmcd98}.

We obtain that
$$
\P ( \{Z- \mu \ge m \} \cap A(\delta,\gamma) ) \le e^{-m^2/(2\delta+ 2\gamma m/3)}.
$$
The same proof gives the same upper bound on $\P ( \{Z- \mu \le - m \} \cap A(\delta,\gamma) )$, and the result follows.
\end{proof}

\begin{proofof}{Theorem~\ref{thm.concb-general}}
We start 
by assuming that $\wtS = S$.
Let $({\mathcal F}_i)$ denote the natural filtration of $(X(i))_{i \in \Z_+}$.
We fix a function $f\colon S \rightarrow \R$, a natural number $k$, and an initial state $x_0 \in S$.
We consider the evolution of $(X(i))_{i \in \Z_+}$ for $k$ steps, conditional on $X(0)=x_0$.
%
Define the random variable $Z := f(X(k))$. Then,
for $i =0, \ldots, k$, $Z_i$ is given by
$$
   Z_i \Eq \E_{x_0} [f(X(k)) \mid \cF_i] \Eq (P^{k-i} f)(X(i)).
$$

To apply Lemma~\ref{thm.mart-b}, we need to bound the conditional variances $\var (Z_i \mid \cF_{i-1})$,
for $1 \le i \le k$.
Conditional on the event $X(i-1) = x_{i-1}$, $Z_i$ takes the value $(P^{k-i}f)(x)$ with probability $P(x_{i-1},x)$.
Since $\var Z \le \E\{(Z-c)^2\}$ for any $c\in\R$, it follows that
\begin{eqnarray}
  \lefteqn{\var (Z_i \mid X(i-1)= x_{i-1}) } \nonumber \\
    &&\Le  \sum_{x\in N(x_{i-1})} P(x_{i-1},x) \Bigl( (P^{k-i}f)(x) - c_{i-1} \Bigr)^2, \label{eq.var.1}
\end{eqnarray}
with $c_{i-1} := (P^{k-i}f)(x_{i-1})$.  Using Assumption~(\ref{cond-gen-4}), this yields
\eqa
    \lefteqn{\var (Z_i \mid X(i-1)= x_{i-1}) }\non\\
    &&\Le
           \sum_{x\in N(x_{i-1})} P(x_{i-1},x) \Big ( (P^{k-i}f)(x) - (P^{k-i}f)(x_{i-1}) \Big )^2 \nonumber \\
    &&\Le  \alpha_{k-i}, \label{eq.var}
\ena
uniformly in $x_{i-1} \in S$. It thus follows that
$$
    \sum_{i=1}^k \var (Z_i \mid \cF_{i-1}) \Le  \sum_{j=0}^{k-1} \alpha_j \Eq  a_k,
$$
so we set $\delta = a_k$.

We also need a uniform upper bound on $|Z_i - Z_{i-1}|$.
We note that
$$
Z_{i-1} = \E\bigl\{\E(f(X(k)) \giv \cF_i) \giv \cF_{i-1} \bigr\}
         =\! \sum_{z \in N(X(i-1))} P(X(i-1),z) (P^{k-i}f)(z).
$$
Note that, from Assumption~\Ref{cond-gen-5}, if $y,z \in N(x)$  for some $x \in S$, then
\eq\label{ADB-Pf-diff}
    \bigl|(P^i f)(y) - (P^i f)(z)\bigr| \Le 2 \beta.
\en
It then follows from~\Ref{ADB-Pf-diff} that, on the event $\{X(i-1)=x_{i-1}\}$,
\begin{eqnarray}
  \bigl| Z_i - Z_{i-1} \bigr|
            & = & \bigl| (P^{k-i}f)(X(i)) - \sum_{z \in N(x_{i-1})} P(x_{i-1},z) (P^{k-i}f)(z) \bigr| \nonumber \\
     & \le & \sum_{z \in N(x_{i-1})} P (x_{i-1},z) \bigl| (P^{k-i}f)(X(i)) - (P^{k-i}f)(z) \bigr| \nonumber \\
     & \le & 2 \beta, \label{eq.abs}
\end{eqnarray}
uniformly in $x_{i-1} \in S$, since, in the last sum, both~$X(i)$ and~$z$ belong to~$N(x_{i-1})$.
Accordingly, we take $\gamma = 2 \beta$.

Theorem~\ref{thm.concb-general} now follows from inequality~\eqref{ineq-cmcd-1} in Lemma~\ref{thm.mart-b},
in the case where $\wtS=S$.

In general, for each $i$, ~\eqref{eq.var} and~\eqref{eq.abs} hold if $x_{i-1} \in \wtS$, and so all the above bounds hold
on the event $A_k=\{X(i) \in \wtS \mbox{ for } i=0, \ldots, k-1\}$. Thus
$A_k \subseteq A(\delta,\gamma)$, as defined in Lemma~\ref{thm.mart-b},
and the full statement of Theorem~\ref{thm.concb-general} follows from inequality~\eqref{ineq-cmcd-2} in Lemma~\ref{thm.mart-b}.
\end{proofof}

\subsection{Contracting chains}

We next show how to use Theorem~\ref{thm.concb-general} to recover a version of Ollivier's results on chains with positive coarse Ricci curvature.

Let $d(\cdot, \cdot)$ be a metric on the state space $S$ of a discrete-time Markov chain $X = (X(i))_{i\ge0}$.
A Markovian coupling $(X\ui,X\ut)$ of two copies of the chain is {\em contracting} with respect to the metric if, for some positive
constant~$\rho$ and for all $x,y \in S$,
\eq\label{ADB-contract-disc-def}
    \E [d(X^{(1)}(1),X^{(2)}(1)) | (X^{(1)}(0), X^{(2)}(0)) = (x,y)] \le (1-\rho) d(x,y).
\en
If condition~(\ref{ADB-contract-disc-def}) holds for all $x,y$ in some subset $\wtS$ of $S$, then we say that the coupling is
contracting on $\wtS$.

The existence of a coupling satisfying~(\ref{ADB-contract-disc-def}) for all pairs of states is equivalent to the inequality
\eq\label{ollivier}
\sup_{x,y \in S} \frac{ W_d({\mathcal L}_x(X(1)), {\mathcal L}_y(X(1))) }{d(x,y)} \le 1-\rho,
\en
where $W_d$ denotes the Wasserstein distance between two measures with respect to the metric~$d$ on a space~$S$: $W_d(\mu,\nu)$ is the infimum of
$\E d(X,Y)$ over all pairs $(X,Y)$ of $S$-valued random variables, with ${\mathcal L}(X)  = \mu$ and
${\mathcal L}(Y) =\nu$.  Ollivier~\cite{Ollivier} defines a Markov chain to have {\em coarse Ricci curvature} at
least $\rho$ if~(\ref{ollivier}) holds: we prefer to say that the Markov chain is {\em contracting in Wasserstein distance}.

In the case where $d$ is a graph distance -- i.e., $d(x,y)$ is the length of a shortest path in a graph between vertices $x$ and $y$ --
inequality~(\ref{ollivier}) is equivalent to
\eq\label{GLP}
\sup_{x \sim y} W_d({\mathcal L}_x(X(1)), {\mathcal L}_y(X(1))) \le 1-\rho,
\en
where $\sim$ denotes adjacency in the graph.  Gheissari, Lubetzky and Peres~\cite{GLP} call a chain satisfying~(\ref{GLP}) $(1-\rho)$-contracting.
We prefer to use the term {\em contracting in Wasserstein distance} to avoid confusion with the concept of contraction introduced by Marton~\cite{Marton96},
which is contraction in total variation distance.

For a Markov chain that is contracting in Wasserstein distance with respect to a metric $d$,
we now prove concentration of measure for any real-valued function $f$ on the state space that is Lipschitz with respect to~$d$.
Part~(a) of the theorem below applies when the Markov chain is contracting on the entire state space;
part~(b) is for when the contraction is only on some ``good set''.

For an event $A$, we let $\overline A$ denote its complement.

\begin{theorem}\label{ADB-contract-disc-lem}
Let $X$ be a discrete-time chain on discrete state space~$S$ with transition matrix $P$.  Suppose that
$d(\cdot,\cdot)$ is a metric on $S$, and let $f\colon S \to \R$ be a function such that, for some constant~$L$,
$|f(x) - f(y)| \le L d(x,y)$ for all $x,y\in S$.  Suppose also that $D$ is a positive constant such that $d(x,y) \le D$ whenever $P(x,y) > 0$.

\smallskip

\noindent
(a) If~$X$ is contracting in Wasserstein distance, with constant~$\r$, and $D_2$ is a constant such that, for all $x \in S$,
\begin{equation} \label{Dtwo}
\sum_{y \in N(x)} P(x,y) d(x,y)^2 \le D_2
\end{equation}
then, for all $x \in S$, $m \ge 0$, and $k \in \N$,
\begin{eqnarray*}
  \Pr_{x} \Big ( \left| f(X(k))-\E_{x} [f(X(k))] \right|\ge m \Big) \phantom{ebweuybrubare} \\
    \Le 2\exp\left( - \frac {m^2}{2L^2D_2/(2\rho - \r^2) + 4LD m/3}\right).
\end{eqnarray*}

\noindent
(b) More generally, suppose that $X$ is contracting in Wasserstein distance on a subset ${\widehat S}$ of $S$, with constant $\r$, and let
$\wtS$ be a further subset of $S$ such that $\wtS^+ := \wtS \cup \bigcup_{x\in \wtS} N(x) \subseteq {\widehat S}$.  Suppose that (\ref{Dtwo}) holds for all $x \in \wtS$.
For $k$ a positive integer, let $A_k = \{X(j) \in \wtS \mbox{ for } 0 \le j \le k-1\}$, and define
$$
e_k := \sup_{y \in \wtS^+} \Pr_y( X(i) \notin {\widehat S} \mbox{ for some } i < k). 
$$
Then, for all $x \in \wtS$ and $m \ge 0$, 
\begin{eqnarray*}
\Pr_{x} \Big ( \{\left| f(X(k))-\E_{x} [f(X(k))] \right|\ge m \} \cap A_k \Big) \phantom{rqwygfyVYCw} \\
\le 2\exp\left( - \frac {m^2}{4L^2(D_2/\r + 12k^3 D^2 e_k^2)  + 4LDm(1+6ke_k)/3}\right).
\end{eqnarray*}
\end{theorem}

Note that we may always take $D_2 = D^2$, but sometimes it is possible to take $D_2$ significantly smaller.  In part~(b), we would expect to be able to choose the various
sets so that $e_k$ is very small.  In order to apply part~(b) effectively, one would need to know that $\Pr(\overline{A_k})$ is small, and this will not be true if the
starting state is ``close to the boundary'' of $\wtS$: a natural approach is to have {\em three} nested sets of states $S^* \subset \wtS \subset \widehat S$, with the
starting state restricted to $S^*$, and with the probability of escaping from one set to the next over the time interval of interest being small; then we obtain concentration
of measure over that time interval, uniformly over starting states in~$S^*$.

\begin{proof}
For part~(a), we apply Theorem~\ref{thm.concb-general} with $\wtS = S$.  For states $x$ and $y$ with $y \in N(x)$, let $(X^{(1)}(i))$ and $(X^{(2)}(i))$
be copies of the chain with $X^{(1)}(0) = x$ and $X^{(2)}(0) = y$, coupled so that $\E [ d(X^{(1)}(i),X^{(2)}(i)) ] \le d(x,y) (1-\rho)^i$ for each
$i \in \Z^+$.  Then we have
$$
|  (P^if)(x) - (P^if)(y)| = |\E f(X^{(1)}(i)) - \E f(X^{(2)}(i))| \le
$$
$$
\E |f(X^{(1)}(i)) - f(X^{(2)}(i))| \le L \E d(X^{(1)}(i),X^{(2)}(i)) \le L d(x,y) (1-\rho)^i,
$$
whenever $y \in N(x)$ and $i \in \Z^+$.  Thus we may take $\beta = LD$ in~(\ref{cond-gen-5}) and $\a_i = (1-\r)^{2i}L^2D_2$ in~(\ref{cond-gen-4}) for
each $i \in \Z^+$.
Since then $a_k \le L^2D_2/(2\r-\r^2)$ for all $k\ge1$, the inequality follows.

For part~(b), our plan is to apply Theorem~\ref{thm.concb-general} to the ``inner'' set $\wtS$, so we need bounds on
$|(P^if)(x) - (P^if)(y)|$ valid whenever $x \in \wtS$ and $y \in N(x) \subseteq \wtS^+$.  Accordingly, we fix such a pair $(x,y)$, and $k \in \N$.
We now consider two copies $(X^{(1)}(i))$ and $(X^{(2)}(i))$ of the chain, with $X^{(1)}(0)=x$ and $X^{(2)}(0)=y$, with a contractive coupling on $\widehat S$
with constant~$\rho$.
For $i \ge 1$, let $B_i$ be the event that both copies of the chain are in $\widehat S$ for all $j<i$, and note that $\Pr(\overline {B_i}) \le 2 e_i$.
We claim that, for each $i$,
$$
\E [ d(X^{(1)}(i),X^{(2)}(i)) I[B_i] ] \le d(x,y) (1-\rho)^i.
$$
This is true for $i=0$.  If the inequality is true for $i-1$, then
\begin{eqnarray*}
\lefteqn{\E [ d(X^{(1)}(i),X^{(2)}(i)) I[B_i] ]} \\
&=& \E [\E [ d(X^{(1)}(i),X^{(2)}(i)) I[B_i] \mid X^{(1)}(i-1), X^{(2)}(i-1) ]] \\
&\le& \E [ (1-\rho) d (X^{(1)}(i-1),X^{(2)}(i-1)) I[B_{i-1}] ] \\
&\le& (1-\rho) d(x,y) (1-\rho)^{i-1},
\end{eqnarray*}
as claimed.
As each step of either chain increases the distance between them by at most $D$, we also have the bound
$$
\E [ d(X^{(1)}(i),X^{(2)}(i)) I[\overline{B_i}] ] \le (2i+1) D \Pr (\overline{B_i}) \le 6i D e_i,
$$
for $i \ge 1$ and also for $i=0$, and therefore
$$
\E [ d(X^{(1)}(i),X^{(2)}(i)) ] \le (1-\rho)^i d(x,y) + 6iD e_i.
$$
Hence we have
$$
| (P^if)(x) - (P^if)(y)| \le L \big( (1-\rho)^i d(x,y) + 6i D e_i \big),
$$
whenever $x \in \wtS$ and $y \in N(x)$.  Additionally we have that
\begin{eqnarray*}
\lefteqn{\sum_y P(x,y) | (P^if)(x) - (P^if)(y)|^2 }\\
&\le& 2L^2 \Big( (1-\rho)^{2i} \sum_y P(x,y) d(x,y)^2 + 36i^2 D^2 e_i^2 \Big),
\end{eqnarray*}
for all $x \in \wtS$.
Thus we can apply Theorem~\ref{thm.concb-general} with $\b = LD ( 1 + 6k e_k)$, for $k \ge 1$, and $\a_i = 2L^2 ((1-\r)^{2i}D_2 + 36i^2 D^2 e_i^2)$
for each~$i$.  Since then $a_k \le 2L^2( D_2 /\r + 12 k^3 D^2 e_k^2)$, the inequality follows.
\end{proof}

Both parts of Theorem~\ref{ADB-contract-disc-lem} follow directly, with essentially the same proof as here, from Theorem~4.5 of Luczak~\cite{l08}.
Part~(a) of the result is also very similar to Theorem~33 of Ollivier~\cite{Ollivier}.  Ollivier's result is for the equilibrium distribution,
although he notes in Remark~39 that a similar result can be obtained for the finite-time distributions.  Ollivier's bounds are stated in terms of a
quantity called the coarse diffusion constant $\sigma(x)$, at a state~$x$, which is closely related to our $D_2$, and a quantity called the local
dimension $n_x$, that is of constant order in most applications with discrete state spaces.  Our proof of Theorem~\ref{thm.concb-general} could be
reworked to use the coarse diffusion constant directly (when bounding the conditional variances, we could instead use that
$\var(Z) = \frac12 \E (Z_1-Z_2)^2$, where $Z_1$ and $Z_2$ are independent copies of $Z$ -- see the proof of Lemma~4.6 in~\cite{l08}).  The conclusion
of our result translates to essentially the same as Ollivier's, with different constants.
The concentration result is of the ``Gaussian-then-exponential'' type.

\subsection{Approximately $f$-contracting chains}

We next illustrate how Theorem~\ref{thm.concb-general} can be applied in other settings, without even a metric on the state space.
One can obtain a result by analysing the direct effect a coupling has on the function $f$ of interest, if
the coupling is ``approximately $f$-contracting'', as we now describe.
As before, let $(X^{(1)})$ and $(X^{(2)})$ be two coupled copies of the Markov chain, and let $f: S \to \R$ be any function.
Suppose that $\sum_{y\in N(x)} P(x,y) |f(x)-f(y)|^2 \le F^2$ for any $x \in S$, and that, for all states $x,y \in S$,
\begin{eqnarray*}
\E [|f(X^{(1)}_1) - f(X^{(2)}_1) | | (X^{(1)}_0, X^{(2)}_0) = (x,y) ]
& \le & (1- \rho) |f(x) - f(y)| \\
&&\mbox{} +  \varepsilon (x,y),
\end{eqnarray*}
for some constant $\rho > 0$, and some ``error function'' $\varepsilon$.
(An example where there is a need for such an error function is in Lemma~3.1 of~\cite{BL2013}.)

An induction argument then gives that, for all $x,y \in S$ and every $k \in \N$,
$$
\E_{x,y} [|f(X^{(1)}_k) - f(X^{(2)}_k) ] \le (1-\rho )^k |f(x) - f(y)| + \eta_k (x,y),
$$
where
$$
\eta_k (x,y) = \sum_{i=0}^k (1-\rho)^{k-i} \E_{x,y} [\varepsilon (X^{(1)}_i, X^{(2)}_i) ].
$$
A convenient assumption, which is satisfied in the example from~\cite{BL2013}, is that
$\E_{x,y} [\varepsilon (X^{(1)}_i, X^{(2)}_i) ] \le  \eps_0 (1-\rho)^i$, for all $i$ and all $x,y \in S$ with $y \in N(x)$, so that $\eta_k(x,y) \le \eps_0 (k+1) (1-\rho)^k$
for each $k$ and each $x$ and $y$ with $y \in N(x)$.
It follows in this case that, for $x \in S$ and every $i \in \N$,
\begin{eqnarray*}
\lefteqn{\sum_{y \in N(x)} P(x,y) |(P^i f)(x) - (P^i f)(y)|^2} \\
 &\le& \sum_{y\in N(x)} 2 P(x,y) \big[(1-\rho)^{2i} |f(x)-f(y)|^2 + \eta_i (x,y)^2\big] \\
&\le& 2 F^2 (1-\rho)^{2i} + 2 \eps_0^2 (i+1)^2 (1-\rho)^{2i}.
\end{eqnarray*}
So we may take $\alpha_i = 2 (F^2 + \eps_0^2 (i+1)^2) (1-\rho)^{2i}$ in Theorem~\ref{thm.concb-general}, and hence $a_k = a = 2F^2/\rho + 4 \eps_0^2/\rho^3$
for all~$k$.
Also we may take $\beta = G + \eps_0$, where $G$ is a uniform bound on $|f(x) - f(y)|$ for all $x \in S$ and $y \in N(x)$.  Applying Theorem~\ref{thm.concb-general}
with these constants then gives a concentration inequality valid for all $x \in S$ and all $m \ge 0$:
$$
\pr_{x} \Big( |f(X_k) - (P^kf)(x)| \ge m  \Big) \le 2 e^{-m^2/(2 a + 4 \beta m/3)}.
$$

\subsection{A toy example}

Many of the chains we might be interested in have stationary distributions, and under suitable conditions our results on long-term concentration of measure imply
concentration of measure in equilibrium.  This is explored in Corollary~4.2 of Luczak~\cite{l08}, giving circumstances
where the chain is guaranteed to have a stationary distribution, and where concentration results carry over to equilibrium.  The main focus of the paper of
Ollivier~\cite{Ollivier} is also concentration of measure in equilibrium.  In the example in Section~\ref{supermarket} of this paper, we use facts from elsewhere about
the equilibrium distribution, as well as our long-term concentration results, to prove concentration of measure of a suitable function in equilibrium.

We finish this section with a very simple class of examples, illustrating very different circumstances when our results can be applied.  These examples have no
stationary distributions, and our results can be applied to show concentration of measure within a window whose width may be constant, or may increase with time.

Consider the discrete-time chain $X(k)$ with state space $\Z_+$, $X(0)=0$, and transition probabilities $p(i,i)=p(i,i+1)=1/2$.  This is thus a pure-birth chain,
stepping up with probability 1/2 at each time.  We also consider a function $f: \Z_+ \to \R$, and we are interested in the long-term behaviour of $f(X(k))$.
Of course, this is easy to analyse directly since $X(k)$ has a Binomial distribution with parameters $(k,1/2)$.  If, for example, $f(x) = x^r$ for some constant~$r \in (0,1]$,
then $f(X(k))$ is concentrated within a window of width $c k^{r-1/2}$ around $(k/2)^r$.

We start by explaining why the hypotheses of Theorem~\ref{ADB-contract-disc-lem} are too restrictive to encompass these examples.
Consider a coupling of two copies of the chain, so that at each step either both copies move up, or neither moves up.  (Choosing a different coupling would not make
any difference.)  Suppose that this coupling is contracting, with constant $\rho >0$, with respect to some metric $d$ on $\Z_+$.  Then we have
$$
\frac12 \left( d(i,j) + d(i+1,j+1) \right) \le (1-\rho) d(i,j),
$$
for each pair $(i,j)$, which amounts to $d(i+1,i) \le (1-2\rho) d(i,i-1)$ for each $i \ge 1$.  If the function $f$ is Lipschitz with respect to $d$, with
constant~$L$, then $|f(i+1) - f(i)| \le L (1-2\rho)^i d(1,0)$.  This condition is only satisfied if
$(f(i))$ converges to a limit $f_\infty$, and moreover $|f(i) - f_\infty| \le C (1-2\rho)_i$ for some constant~$C$.  In particular, none of the functions $f(x)=x^r$
satisfy the hypotheses, even though a time-independent concentration result does hold when $r \le 1/2$.

We now show how to apply our more general result, Theorem~\ref{thm.concb-general}, to the class of functions $f(x) = x^r$, with $0<r \le 1$.  We note that
$f(x+1) - f(x)$ is non-increasing in~$x$, and that
$\Pr(X(i) \le i/3) \le e^{-i/36}$ from the Chernoff bound.  Then we have, for any $x$, and $i$ sufficiently large,
\begin{eqnarray*}
|P^if(x+1) - P^if(x)| &\le& P^if(1) - P^if(0) \\
&\le& 1 \times \Pr(X(i) \le i/3) + \left[ (i/3 +1)^r - (i/3)^r \right] \\
&\le& e^{-i/36} + r (i/3)^{r-1} \le 2 (i/3)^{r-1}.
\end{eqnarray*}
Hence we may take $\alpha_i = 2 (i/3)^{2r-2}$ for large enough~$i$, and then $a_k = \sum_{i=0}^{k-1} \alpha_i$ is at most a constant $C(r)$ for $r \in (0,1/2)$, and at most
$C(r) k^{2r-1}$ for $r > 1/2$.  We may also take $\beta=1$.
For $r < 1/2$, applying Theorem~\ref{thm.concb-general} with $\wS$ equal to the entire state space $\Z_+$, gives a uniform bound on the concentration:
$$
\Pr(|f(X(k) - \E_0 f(X(k))| \ge m) \le 2 e^{-m^2/(2C(r) + 4m/3)},
$$
for all~$k$, showing that $f(X(k))$ remains concentrated within a window of constant width around its mean for all~$k$.  Of course, this is still far from a sharp result.
For $r > 1/2$, we obtain that
$$
\Pr(|f(X(k) - \E_0 f(X(k))| \ge m) \le 2 e^{-m^2/(2C(r)k^{2r-1} + 4m/3)},
$$
so that $f$ is concentrated within $k^{r-1/2}$ of its expectation, which in this case is the correct order of magnitude.

\section{Concentration inequality: continuous time}
\label{ADB-cts-time}

We now state and prove a 
continuous-time version of Theorem~\ref{thm.concb-general}.
For definitions concerning continuous-time Markov chains, see Anderson~\cite{Anderson}, in particular pages 13 and 81
(we use the term ``non-explosive'' in place of~``regular'').

Let $\wX=(\wX(t))_{t \in \R^+}$ be a stable, conservative, non-explosive continuous-time Markov chain with a discrete state space~$S$
and $Q$-matrix $(\wQ(x,y): x,y\in S)$.  Let $\wP^t = e^{\wQ t}$ denote the transition probabilities of $\wX$.
Much as before, for a function $f: S \to \R$, we write $(\wP^t f)(x)$ to denote $\E_x f(\wX(t))$, whenever it exists.

For $x \in S$, we set
\eqa
   N(x) &:=& 
   \{y \in S: \wQ(x,y) > 0\}.\non
\ena

%
%

\begin{theorem}
\label{thm.concb-continuous}
Let $(\wQ(x,y): x,y\in S)$ be the $Q$-matrix of a stable, conservative, non-explosive continuous-time Markov chain
$(\wX(t))_{t \ge 0}$ with discrete state space~$S$. 
Writing $q_x = -\wQ(x,x)$, let
$\wS$ be a subset of $S$, for which $q := \sup_{x\in \wS} \{q_x \} < \infty$. 
Let $f\colon S \to \R$ be a function such that $(\wP^t f)(x) := \ex_x f(\wX(t))$
exists for all $t \ge 0$ and $x \in S$, and suppose that $\wbeta$ is a constant such that
\begin{equation} \label{eq.coupling-cts}
   \big|(\grb{\wP^s} f)(x) -  (\grb{\wP^s} f)(y)\big| \Le \wbeta,
\end{equation}
for all $s \ge 0$, all $x \in \wS$ and all $y \in N(x)$.
Assume also that the continuous function $\weta: \R^+ \to \R^+$ satisfies
\begin{equation}\label{ADB-cts-key-bnd}
    \sum_{y\in S} \wQ(x,y) \big( (\wP^s f)(x) -  (\wP^s f)(y) \big)^2 \Le \weta(s),
\end{equation}
for all $x \in \wS$ and all $s \ge 0$. Define $\wa_t := \int_{s=0}^t \weta(s) \, ds$.
Finally, let
$A_t := \{\wX(s) \in \wS \mbox{ for all } 0 \le s < t\}$.
Then, for all $x_0 \in \wS$, $t\ge0$ and $m \ge 0$,
\begin{equation*}
  \Pr_{x_0} \Bigl( \left \{ \bigl|f(\wX(t))-(\wP^t f)(x_0) \bigr| > m \right \} \cap A_t \Bigr)
     \Le 2e^{-m^2/(2\wa_t + 2\wbeta m/3)}.
\end{equation*}
\end{theorem}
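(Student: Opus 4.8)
The plan is to imitate the exponential-supermartingale argument underlying Lemma~\ref{thm.mart-b}, but to run it directly in continuous time via the extended generator of the space--time process $(\wX(s),s)$, so that no discretization is needed. Fix $t\ge0$ and $x_0\in\wS$, and write $G(x,s):=(\wP^{t-s}f)(x)$ for $x\in S$ and $0\le s\le t$. Under the stated hypotheses $r\mapsto(\wP^r f)(x)$ solves the backward Kolmogorov equation (this is where stability and finiteness of $(\wP^r f)(x)$ enter), so $\partial_s G(x,s)=-\sum_{y}Q(x,y)\bigl(G(y,s)-G(x,s)\bigr)$, and $M_s:=G(\wX(s),s)=\ex_{x_0}[f(\wX(t))\mid\cF_s]$ is a martingale with $M_0=(\wP^t f)(x_0)$ and $M_t=f(\wX(t))$. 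For a fixed $h>0$ I would then introduce the space--time function
\[
   \psi(x,s)\ :=\ \exp\Bigl(h\,G(x,s)-h^2 g(h\wbeta)\int_0^s\weta(t-u)\,du\Bigr),
\]
where $g(b):=(e^b-1-b)/b^2$ is the increasing function from the proof of Lemma~\ref{thm.mart-b}.

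The first step is to bound the action $(\cA\psi)(x,s)=\partial_s\psi(x,s)+\sum_y Q(x,y)\bigl(\psi(y,s)-\psi(x,s)\bigr)$ of the space--time generator on $\psi$. Factoring out $\psi(x,s)>0$ and using the backward equation to rewrite $\partial_s\psi$, one gets
\[
   (\cA\psi)(x,s)=\psi(x,s)\Bigl[\,\sum_y Q(x,y)\bigl(e^{v_y}-1-v_y\bigr)-h^2 g(h\wbeta)\,\weta(t-s)\Bigr],\qquad v_y:=h\bigl(G(y,s)-G(x,s)\bigr).
\]
For $x\in\wS$ and $y$ with $Q(x,y)>0$, \eqref{eq.coupling-cts} gives $|v_y|\le h\wbeta$, so the elementary inequality $e^v-1-v\le g(h\wbeta)v^2$ (valid for $|v|\le h\wbeta$, since $g$ is increasing) together with \eqref{ADB-cts-key-bnd} bounds the sum by $g(h\wbeta)h^2\sum_y Q(x,y)\bigl(G(y,s)-G(x,s)\bigr)^2\le g(h\wbeta)h^2\weta(t-s)$. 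Hence $(\cA\psi)(x,s)\le0$ for every $(x,s)\in\wS\times[0,t]$.

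Next, set $\tau:=\inf\{s\ge0:\wX(s)\notin\wS\}$. Since $q=\sup_{x\in\wS}q_x<\infty$ and the chain is non-explosive, $\wX$ makes only finitely many jumps before $\tau$ a.s., and a routine localization — stopping in addition at the exit times from an increasing sequence of finite subsets of $S$, applying Dynkin's formula on each, and letting the finite sets exhaust $S$ with the help of Fatou's lemma (using $\psi\ge0$) — shows that $\bigl(\psi(\wX(s\wedge\tau),s\wedge\tau)\bigr)_{0\le s\le t}$ is a nonnegative supermartingale. Therefore $\ex_{x_0}\bigl[\psi(\wX(t\wedge\tau),t\wedge\tau)\bigr]\le\psi(x_0,0)=e^{h(\wP^t f)(x_0)}$. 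On $A_t$ one has $\tau\ge t$, and since $\int_0^t\weta(t-u)\,du=\wa_t$ this yields $\1_{A_t}\psi(\wX(t),t)=\1_{A_t}\exp\bigl(hf(\wX(t))-h^2 g(h\wbeta)\wa_t\bigr)\le\psi(\wX(t\wedge\tau),t\wedge\tau)$; taking expectations,
\[
   \ex_{x_0}\Bigl[\1_{A_t}\,e^{h(f(\wX(t))-(\wP^t f)(x_0))}\Bigr]\ \le\ e^{h^2 g(h\wbeta)\wa_t}.
\]

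Finally, a Chernoff step gives $\Pr_{x_0}\bigl(\{f(\wX(t))-(\wP^t f)(x_0)\ge m\}\cap A_t\bigr)\le e^{-hm+h^2 g(h\wbeta)\wa_t}$, and optimizing over $h>0$ exactly as in the proof of Theorem~2.7 of McDiarmid~(1998) (using $g(b)\le(2(1-b/3))^{-1}$ for $0\le b<3$) produces $e^{-m^2/(2\wa_t+2\wbeta m/3)}$; running the same argument with $h<0$ controls the lower tail, and adding the two bounds gives the factor~$2$. I expect the main obstacle to be the rigorous justification of the supermartingale/Dynkin step when $f$, hence $G$, may be unbounded — this is precisely where non-explosivity and $q<\infty$ on $\wS$ are used, to keep the jump count finite and make the localization argument legitimate; the generator computation and the subsequent optimization are entirely routine.
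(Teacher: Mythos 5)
Your argument is correct, and it reaches the paper's bound by a genuinely different technical route. The paper works with the martingale $Z_s=(\wP^{t-s}f)(\wX(s))-(\wP^t f)(x_0)$, stops it on exit from $\wS$, bounds its predictable quadratic variation by $\wa_t$ using \eqref{ADB-cts-key-bnd}, and then invokes van de Geer's (1995) exponential supermartingale lemma for the process $\exp\{h\wZ_s-V^h_{s\wedge\tau_0}\}$, where $V^h$ is the (random) compensator of the jump-exponential sum, bounded via \eqref{eq.coupling-cts} by $h^2g(h\wbeta)\langle\wZ\rangle_t$. You instead build the supermartingale directly: you replace the random compensator by its deterministic majorant $h^2g(h\wbeta)\int_0^s\weta(t-u)\,du$ inside the exponential, verify the pointwise generator inequality $(\cA\psi)(x,s)\le0$ on $\wS\times[0,t]$ using exactly the same elementary bound $e^v-1-v\le g(h\wbeta)v^2$ together with \eqref{eq.coupling-cts} and \eqref{ADB-cts-key-bnd}, and then run Dynkin's formula with localization and Fatou. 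What your route buys is self-containedness: no appeal to compensators or to van de Geer's lemma, and the role of the hypotheses is visible in a single generator computation. What the paper's route buys is that the delicate analytic steps are outsourced: in particular, the differentiability of $s\mapsto(\wP^s f)(x)$ and the validity of the backward Kolmogorov equation at states of $\wS$ -- which you assert in one line as a consequence of ``stability and finiteness'' -- is precisely the content of the paper's Lemma~\ref{ADB-diffable}, proved there from \eqref{eq.coupling-cts} and $q<\infty$ on $\wS$ (and only on $\wS$, which is all you use); similarly, your ``routine localization'' for the possibly unbounded $\psi$ needs the observation that \eqref{eq.coupling-cts} controls $\psi$ at states reached in one jump from $\wS$, so the stopped expectations stay finite. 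With those two points spelled out at the level of detail of Lemma~\ref{ADB-diffable}, your proof is complete and gives the same constants.
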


Exactly as in the discrete case, a bound on the variance of $f(\wX(t))$ follows in the case where $\wS= S$.

In order to prove the theorem, we first need to show that, for any fixed $x \in \wS$, the
function~$(\wP^s f)(x)$ has zero quadratic variation on any finite $s$-interval.
This follows from the following lemma.

\begin{lemma}\label{ADB-diffable}
 Under the above assumptions, for each $x \in \wS$, $(\wP^s f)(x)$ is continuously differentiable with respect to~$s$.
\end{lemma}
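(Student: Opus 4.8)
The plan is to show that $t \mapsto (\wP^t f)(x)$ solves a Kolmogorov-type differential equation with a continuous right-hand side, so that it is $C^1$. Fix $x \in \wS$. The natural candidate for the derivative is $\frac{d}{dt}(\wP^t f)(x) = \sum_{y \in S} Q(x,y)\bigl((\wP^t f)(y) - (\wP^t f)(x)\bigr)$; note the series on the right is absolutely convergent because, by~\eqref{eq.coupling-cts}, each summand is bounded by $q_x$ copies of $\wbeta$ in total after grouping, and more precisely $\sum_{y} Q(x,y)|(\wP^t f)(y) - (\wP^t f)(x)| \le q_x \wbeta < \infty$ since $x \in \wS$. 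So first I would establish that this sum is well-defined, finite, and — using~\eqref{ADB-cts-key-bnd} together with the continuity of $\weta$ and dominated convergence — continuous in $t$ on $[0,\infty)$ (continuity of each term $(\wP^t f)(y)$ in $t$ follows from the fact that $\wX$ is non-explosive and the forward equations hold).

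The key step is to justify the backward equation $(\wP^{t+h} f)(x) - (\wP^t f)(x) = \int_0^h \sum_{y} Q(x,y)\bigl((\wP^{t+s} f)(y) - (\wP^{t+s} f)(x)\bigr)\,ds$, i.e.\ that $(\wP^t f)(x)$ is the integral of the continuous function identified above. Since $q_x < \infty$, the state $x$ is stable and the chain, conditioned to start at $x$, holds for an $\mathrm{Exp}(q_x)$ time before jumping to $y$ with probability $Q(x,y)/q_x$; conditioning on the time and location of the first jump gives the renewal-type identity
\[
   (\wP^{t} f)(x) = e^{-q_x t} f(x) + \int_0^t q_x e^{-q_x s} \sum_{y \ne x} \frac{Q(x,y)}{q_x}\,(\wP^{t-s} f)(y)\,ds,
\]
valid because $f$ is bounded along trajectories started at $x$ (which is what makes $(\wP^s f)(x)$ exist by hypothesis) and the interchange of sum and integral is legitimate by the absolute bound $\sum_y Q(x,y)|(\wP^s f)(y)| \le q_x(\|f\|\text{-type bound})$; more carefully one uses $|(\wP^{t-s}f)(y) - (\wP^{t-s}f)(x)| \le \wbeta$ and pulls out $(\wP^{t-s}f)(x)$. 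Substituting $u = t-s$ and differentiating this expression in $t$ — the integrand $q_x e^{-q_x(t-u)} \sum_y \frac{Q(x,y)}{q_x}(\wP^u f)(y)$ is continuous in $u$, so the fundamental theorem of calculus applies — yields exactly $\frac{d}{dt}(\wP^t f)(x) = -q_x(\wP^t f)(x) + \sum_{y \ne x} Q(x,y)(\wP^t f)(y) = \sum_y Q(x,y)\bigl((\wP^t f)(y) - (\wP^t f)(x)\bigr)$, which we have already shown is continuous. Hence $(\wP^t f)(x)$ is $C^1$.

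The main obstacle is the rigorous handling of the interchanges of summation and integration, and of differentiation under the integral sign, in the regime where $f$ itself need not be bounded on all of $S$ — only $(\wP^t f)(x)$ is assumed to exist, and $\wbeta$-Lipschitz-type control is available only for one-step differences from states in $\wS$. The way around this is to work throughout with the \emph{differences} $(\wP^s f)(y) - (\wP^s f)(x)$ rather than with $(\wP^s f)$ directly: the bound $|(\wP^s f)(y) - (\wP^s f)(x)| \le \wbeta$ for $y \in N(x)$ from~\eqref{eq.coupling-cts}, the square-summability from~\eqref{ADB-cts-key-bnd}, and the finiteness $q_x \le q < \infty$ together provide a uniform dominating function on $[0,T]$ for every finite $T$, which is all that is needed to legitimise each limiting operation. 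Once the $C^1$ claim is in hand, the zero quadratic variation of $s \mapsto (\wP^s f)(x)$ on any finite interval is immediate, since a continuously differentiable function is locally Lipschitz and hence of bounded variation with vanishing quadratic variation.
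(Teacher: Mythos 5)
Your overall skeleton is the same as the paper's: condition on the time and destination of the first jump to obtain a backward integral identity (your renewal identity is exactly the paper's equation \Ref{2} after the substitution $u=t-s$), show the integrand is well behaved, and then differentiate. However, the two steps that carry the real weight are asserted rather than proved, and as written they do not go through for the class of functions the lemma covers, namely $f$ possibly unbounded with only the existence of $(\wP^t f)(x)$ assumed.

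First, you justify the identity and the sum/integral interchanges by saying that ``$f$ is bounded along trajectories started at $x$'' and by invoking a ``$\|f\|$-type bound''; neither is available --- existence of $\E_x f(\wX(t))$ does not bound $f$ along paths, and $f$ need not be bounded on $N(x)$, let alone on $S$. Even after centering at $(\wP^{t-s}f)(x)$ as you suggest, the integrand still contains the term $q_x(\wP^{t-s}f)(x)$, so you need $\sup_{0\le v\le t}|(\wP^v f)(x)|<\infty$, which you never establish and cannot obtain from the $\wbeta$-bound on differences alone (getting it from continuity would be circular). The paper supplies exactly this missing ingredient: it reduces to $f\ge 0$ (treating $f^+$ and $f^-$ separately, which also legitimises the Tonelli-type interchanges), notes that $(\wP^t f)(x)\ge e^{-q_x s}(\wP^{t-s}f)(x)$, and hence derives the a priori bound \Ref{Pf-bound}, $(\wP^v f)(y)\le \wbeta + e^{q_x(t-v)}(\wP^t f)(x)$ for $y\in N(x)$ and $0\le v\le t$, which makes the integrand in \Ref{2} uniformly bounded on $[0,t]$. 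Second, you assert that $t\mapsto(\wP^t f)(y)$ is continuous for every $y$ ``because $\wX$ is non-explosive and the forward equations hold''. This is unsupported for unbounded $f$, and it is essentially circular: continuity of $(\wP^t f)(x)$ is part of what must be proved, and your dominated-convergence argument for continuity of the candidate derivative (hence your fundamental-theorem step) rests on it; the appeal to \eqref{ADB-cts-key-bnd} and the continuity of $\weta$ does not help here, since domination is not the issue. The paper instead extracts continuity of $(\wP^s f)(x)$, $x\in\wS$, as an \emph{output} of the integral equation --- a bounded integrand makes the indefinite integral, and hence $(\wP^s f)(x)$, continuous --- and only then upgrades, via the uniform convergence of $\sum_z Q(x,z)\{(\wP^v f)(z)-(\wP^v f)(x)\}$ coming from \eqref{eq.coupling-cts}, to continuity of the integrand and so to continuous differentiability. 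So you chose the right route, but the positivity reduction with its a priori bound, and the derivation (rather than assertion) of continuity, are genuinely missing.
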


\begin{proof}
We can suppose that $f(x) \ge 0$ for all~$x\in S$;  if not, it suffices to consider
the positive and negative parts $f^+$ and~$f^-$ of~$f$ separately.  This enables the exchange
of sums and integrals in the argument that follows.

First, by considering what happens up to time $s$, we have
\[
     (\wP^t f)(x) \ \ge\ e^{-q_xs}(\wP^{t-s} f)(x),\qquad 0 \le s \le t,\ x \in \wS.
\]
Thus, from~\Ref{eq.coupling-cts}, for $x \in \wS$ and~$y \in N(x)$, it follows that
\eq\label{Pf-bound}
    (\wP^v f)(y) \Le \wbeta + e^{q_x(t-v)} (\wP^t f)(x), \qquad 0 \le v \le t  .
\en
Now, since 
$$
   \wP^s(y,z) = \pr_y[\wX(s) = z],
$$
the Kolmogorov backward equations imply that,
for any $x \in \wS$ and $s > 0$, we have
\begin{eqnarray}
 (\wP^s f)(x) &=& \sum_{y\in S} f(y)
         \Bigl\{e^{-q_xs}\delta_{xy} + \int_0^s e^{-q_xu} \sum_{z \in S}\wQ(x,z) \wP^{s-u}(z,y)\,du \Bigr\} \nonumber\\
  &=& f(x) e^{-q_xs} + \int_0^s  e^{-q_x(s-v)}\sum_{z \in S}\wQ(x,z) (\wP^v f)(z)\,dv . \label{2}
\end{eqnarray}
In view of~(\ref{Pf-bound}), and because $\sum_{z \in S}\wQ(x,z) = q_x < \infty$, the integrand on the
right hand side of~(\ref{2}) is uniformly bounded on $[0,t]$ for any~$t< \infty$, implying that the
indefinite integral is continuous in~$s$.  From this, it follows immediately that $(\wP^s f)(x)$ is
continuous in~$s$ also.  But then, for $x \in \wS$,
\[
    \sum_{z \in S} \wQ(x,z)(\wP^v f)(z) \Eq q_x(\wP^v f)(x) + \sum_{z \in S} \wQ(x,z)\{(\wP^v f)(z) - (\wP^v f)(x)\}
\]
is a {\it uniformly\/} convergent sum, in view of~\Ref{eq.coupling-cts}, and so the integrand in~(\ref{2}) is
continuous; thus the indefinite integral is continuously differentiable with respect to~$s$, and
hence $(\wP^s f)(x)$ is also.
\end{proof}

\begin{proofof}{Theorem~\ref{thm.concb-continuous}}
Fix $\wX(0)=x_0 \in \wS$ and, for $0 \le s \le t$, define
\[
     Z_s \Def \ex\{f(\wX(t)) \giv \FF_s\} - (\wP^t f)(x_0) \Eq (\wP^{t-s} f)(\wX(s)) - (\wP^t f)(x_0);
\]
note that $Z_t = f(\wX(t)) - (\wP^t f)(x_0)$ and that $Z_0 = 0$.  Then $(Z_s)_{0\le s\le t}$ is a martingale,
and so is $(\wZ_s)_{0\le s\le t}$, where $\wZ_s := Z_{s\wedge\t_0}$, and
\[
    \t_0 \Def \inf\{s \ge 0\colon\,\wX(s) \notin \wS\}.
\]
We now use a supermartingale derived from~$\wZ$ to prove a concentration bound.

In view of Lemma~\ref{ADB-diffable}, the continuous part of~$Z$ has no quadratic variation
until~$\t_0$, and so
the predictable quadratic variation of~$\wZ$ is given by
\[
   \langle \wZ \rangle_t \Eq \int_0^{t\wedge\t_0} \sum_y q(\wX(s),y) \{(\wP^{t-s}f)(y) - (\wP^{t-s}f)(\wX(s))\}^2\,ds.
\]
Hence, by~\Ref{ADB-cts-key-bnd},
\eq\label{1}
    \langle \wZ \rangle_t  \Le \int_0^t \weta(t-s)\,ds \ =\ \wa_t \ <\ \infty.
\en

Let the jump times of~$\wX$ be denoted by $0 < \s_1 < \s_2 < \cdots$, and write
\[
    U_s^h \Def \sum_{i\colon \s_i \le (s\wedge \t_0)} (e^{h\D Z_i} - 1 - h\D Z_i)
     \Eq h^2 \sum_{i\colon \s_i \le (s\wedge \t_0)} (\D Z_i)^2 g(h\D Z_i),
\]
where~$g(x)= (e^x-1-x)/x^2$, as in the proof of Lemma~\ref{thm.mart-b}, and, for $i$ such that $\s_i \le \t_0$,
\[
     \D Z_i \Def Z_{\s_i} - Z_{\s_i-}
           \Eq (\wP^{t-\s_i}f)(\wX(\s_i)) - (\wP^{t-\s_i}f)(\wX(\s_i-)),
\]
using the continuity of $(\wP^s f)(x)$ in~$s\ge0$ for each $x\in \wS$.

Let $V^h$ denote the compensator of~$U^h$.  We first note that~$V^h_s$ is finite, at least for $s \le \t_0$.
This is because, for $0 \le v < s \le \t_0$, we have
\[
    0 \Le U^h_s - U^h_v \Le h^2 g(h\wbeta) \sum_{i\colon v < \s_i \le s}(\D Z_i)^2 \quad {\rm a.s.},
\]
by~\Ref{eq.coupling-cts}, as $g$ is increasing on $[0,\infty)$.  Hence, noting that $A_t = \{\t_0 \ge t\}$, we see that
\eq\label{4}
I[A_t] e^{V_t^h} \le I[A_t] \exp( h^2 g(h \wbeta) \wa_t),
\en
in view of~(\ref{1}).

Now $\wZ$ is a square integrable martingale, because of~(\ref{1}), and hence, from the proof
of Lemma~2.2 in van de Geer~\cite{vdG95},
$\exp\{h\wZ_s - V^h_{s\wedge\t_0}\}$ is a non-negative supermartingale with initial value~$1$,
since the continuous part of~$\wZ$ has no quadratic variation. Thus
\[
    1 \ \ge\ \ex(I[A_t] \exp\{h\wZ_t - V^h_{t\wedge\t_0}\}) \Eq \ex(I[A_t] \exp\{hZ_t - V^h_{t}\}).
\]
On the other hand, using 
(\ref{4}),
\[
    I[A_t] \exp\{hZ_t - V^h_{t}\} \ \ge\ I[A_t] e^{hZ_t} \exp\{-h^2 g(h\wbeta) \wa_t\}.
\]
Hence
\[
    e^{hm}\pr_x[\{Z_t \ge m\} \cap A_t] \Le \ex_x\{I[A_t] e^{hZ_t}\} \Le \exp\{h^2 g(h\wbeta) \wa_t\},
\]
or
\[
    \pr_x[\{f(\wX(t)) - (\wP^t f)(x) \ge m\} \cap A_t] \Le \exp\{h^2 g(h\wbeta) \wa_t - hm\}.
\]
We again optimise in $h$, as in the proof of Theorem 2.7 in~McDiarmid~\cite{cmcd98}, and then repeat the argument for a bound on
$\pr_x[\{f(\wX(t)) - (\wP^t f)(x) \le - m\} \cap A_t]$.
\end{proofof}

Let $(\wX(t))_{t\ge0}$ be a stable, conservative, non-explosive continuous-time chain with state space $S$,
and let $d(\cdot,\cdot)$ be a metric on $S$.  A Markovian coupling of two copies of $(\wX(t))_{t\ge0}$ is itself a contiuous-time Markov chain, with a generator
that we denote $\cA$.  The coupling is said to be
{\em contracting with respect to $d$, with constant $\rho > 0$}, if, for all $x,y \in S$,
\eq\label{ADB-contract-cts-def}
   \cA d(x,y)  \Le -\rho d(x,y).
\en
If the above holds for all $x$ and $y$ in some $\wS \subseteq S$, then we say that the coupling is contracting on $\wS$.
We say that $(\wX(t))_{t\ge0}$ is {\em contracting in Wasserstein distance} if there is a coupling satisfying~(\ref{ADB-contract-cts-def}) for all
$x,y \in S$.  This definition corresponds to that of positive coarse Ricci curvature for continuous-time chains given by Veysseire~\cite{Veysseire},
in the setting of jump chains.

The next result establishes concentration of measure for continuous-time chains that are contracting in Wasserstein distance.
We state our result only for the case when the Markov chain is contracting on the
entire state space, but there is not necessarily a global upper bound on the total transition rate out of a state.  We could also provide a
version for use when the contraction property only holds on a ``good set'', but it seems hard to cover all the possible cases where such a result
might be useful: an issue is that we need some mild control on the growth of $f$ in the unlikely event that the chain leaves the good set (in the discrete
case, we used that the chain makes a bounded number of steps of bounded distance) and the form of the bounds will depend on the manner of that control.

\begin{theorem}\label{ADB-contract-cts-lem}
Let $\wX$ be a stable, conservative, non-explosive
continuous-time Markov chain on a discrete state space $S$, with $Q$-matrix $\wQ := (\wQ(x,y): x,y\in S)$.  Suppose that
$d(\cdot,\cdot)$ is a metric on $S$, and let $f\colon S \to \R$ be a function such that, for some constant~$L$,
$|f(x) - f(y)| \le L d(x,y)$ for all $x,y\in S$.

Let $\wS$ be a subset of $S$, and let $q$ and $D$ be constants such that $-\wQ(x,x) \le q$ for all $x \in \wS$ and
$d(x,y) \le D$ whenever $x \in \wS$ and $y \in N(x)$.
For $t > 0$, let $A_t = \{\wX(s) \in \wS \mbox{ for } 0 \le s <t\}$.

Suppose that $\wX$ is contracting in Wasserstein distance, as in~\Ref{ADB-contract-cts-def}, with constant~$\r$.
Then, for all $x \in \wS$, $t > 0$ and $m \ge 0$,
\begin{eqnarray*}
\lefteqn{\Pr_{x} \Big ( \Big\{\left| f(\wX(t))-\E_{x} [f(\wX(t))] \right|\ge m \Big\} \cap A_t \Big)} \\
    &\Le& 2\exp\left( - \frac {m^2}{qL^2D^2/\rho  + 2LD m/3}\right).
\end{eqnarray*}
\end{theorem}

\begin{proof}
It follows from~\Ref{ADB-contract-cts-def} that, under a contracting coupling of two copies $\wX^{(1)}$ and $\wX^{(2)}$, the
process $\bigl\{e^{\r t}d(\wX^{(1)}(t), \wX^{(2)}(t))\bigr\}_{t \ge 0}$ is a non-negative local supermartingale.
Thus, if $(\wX^{(1)}(0), \wX^{(2)}(0)) = (x,y)$, then
\eq\label{ADB-contract-cts-basic}
\E d(\wX^{(1)}(t), \wX^{(2)}(t)) \le e^{-\rho t} d(x,y),\qquad t\ge0.
\en
We can now apply Theorem~\ref{thm.concb-continuous}, with
$$
  \wbeta   \Eq  L D, \quad
  \weta(s) \Eq e^{-2\rho s} q L^2 D^2,
$$
and so, for any $t > 0$,
$$
   \wa_t \Eq  q L^2 D^2 \int_{0}^t e^{-2\rho s} \, ds \Le \frac{q D^2 L^2}{2\rho}.
$$
The result now follows from Theorem~\ref{thm.concb-continuous}.
\end{proof}

Note that the upper bound in Theorem~\ref{ADB-contract-cts-lem} on the deviations of $f(\wX(t))$ from its expectation does not depend on~$t$.  As in the discrete case,
in many applications, the distribution of $\wX(t)$ will approach an equilibrium, and the bound above implies a bound on the concentration of $f(\wX(t))$ in equilibrium.
However, it might well be the case that $\Pr(A_t) \to 0$ as $t \to \infty$: eventually the chain leaves the good set, and once it does we cannot hope to say much about its
behaviour.


\section{Upper bounds on coalescence times}
\label{coalesce}

In this section, we prove an auxiliary result for continuous-time Markov chains, which we will use (primarily in Section~\ref{parasites}) to show that
a chain with a contracting coupling mixes rapidly once it enters a region $R$ of the state space where the equilibrium distribution is concentrated; this is
therefore a useful ingredient in a proof of cut-off, showing that the mixing time from any ``distant'' state is dominated by the ``travel time'' to reach $R$.

We study a function of a continuous-time Markov chain on the non-negative reals, with non-positive drift in all positive states,
and prove a lower bound on the hitting time of state~0.  For a contracting coupling $(X(t),Y(t))$ of two copies of a Markov chain with respect to the metric~$d$ on their
state space~$S$, we can apply our result below to the function $d(X(t),Y(t))$ of the Markov chain $(X(t),Y(t))$, in order to show that coalescence occurs quickly once
the distance between the two copies is reasonably small: we illustrate this method in Section~\ref{parasites}.

We deal only with the continuous-time case.  Proposition~17.19 of Levin, Peres and Wilmer~\cite{LPW} gives an analogous result for discrete-time chains, which can often
be used in a similar way to that described above; our proof of the proposition below follows theirs.

\begin{proposition} \label{hitting}
Let $X$ be a stable, conservative, non-explosive continuous-time Markov jump chain, with state space $S$ and $Q$-matrix $Q$.
Let $B$ and $\sigma^2$ be positive, and let $f\colon S \to \R_+$ be a function.  Set $S_0 := \{x\colon f(x) = 0\}$, and assume that:
\begin{itemize}
\item [(i)] the drift $\sum_y Q(x,y) \big( f(y) - f(x) \big)$ of $f$ is non-positive for all $x$ in $S \setminus S_0$;
\item [(ii)] $f(X)$ makes jumps of magnitude at most $B$;
\item[(iii)] $\sum_y Q(x,y) \big( f(y) - f(x) \big)^2 \ge \sigma^2$ for all $x\in S \setminus S_0$.
\end{itemize}
Define $T_* := \inf \{ t \colon f(X(t)) = 0 \}$, the hitting time of $S_0$.
Then, for any $t_0 \ge 2 B^2/\sigma^2$,
\begin{equation} \label{eq.jht}
\Pr (T_* \ge t_0) \le \frac{2\sqrt2 f(X(0))}{\sigma \sqrt{t_0}}.
\end{equation}
\end{proposition}

Notes:
\begin{enumerate}
\item [(a)] The nature of the underlying state space $S$ is not relevant, and we do not need to assume that the set $\{ f(x) : x \in S\}$ is discrete.

\item [(b)]
It is not a priori obvious that $S_0$ is non-empty or that $T_*$ is a.s.\ finite, but these follow from the result.

\item [(c)]
Suppose that $f(X_0) \ge B/2$.  In the case where $t_0 < 2 B^2/\sigma^2$, we then have
$\Pr (T_* \ge t_0) \le 1 \le \frac{2\sqrt2 f(X(0))}{\sigma \sqrt{t_0}}$, and so (\ref{eq.jht}) holds without any condition on $t_0$.
\end{enumerate}

The motivating example underlying the proposition is that of a simple random walk $X(t)$ on $\Z_+$
(with $f(x) = x$), making steps up and down each at rate $1$, until the walk hits~0, so that the sum in~(iii)
is equal to $2$ for each positive state.  In this case, the proposition says that
the walk hits~0 before time~$t_0$ with probability at least
$1 - \frac{2 X(0)}{\sqrt{t_0}}$, which is best possible up to a constant factor.  The proposition
then gives conditions, for more general processes, under which the same behaviour holds.

As mentioned already, we shall apply Proposition~\ref{hitting} to a Markovian coupling $(X,Y)$, where $X$ and $Y$ are two copies of a jump Markov chain with a state
space~$S$ equipped with a metric $d$, and $f\big((x,y)\big) = d(x,y)$.  The conclusion is equivalent to saying that the chains have coalesced by time $t_0$ with probability
at least $1 - 2\sqrt 2 d(X(0),Y(0))/\sqrt{t_0} \sigma$ (unless the two chains start within distance $B/2$ of each other, where $B$ is the maximum size $B$ of a jump in the
distance, and $t_0$ is less than $2 B^2/\sigma^2$).  If the coupling is contracting with respect to $d$, then condition~(i) is
satisfied.  A lower bound $\sigma^2$ on the expression in condition~(iii) can be obtained when, under the coupling, the distance between the two copies changes by at least
$\eta$ at rate at least~$r$, for suitable $\eta$ and $r$.

Our proof follows that of Proposition~17.19 in Levin, Peres and Wilmer~\cite{LPW}.

\begin{proof}
Let $D(t) = f(X(t))$, so that $T_* = \inf \{ t : D(t) = 0\}$.  For some $h \ge B \vee D(0)$ to be chosen later, let
$T_h = \inf \{ t : D(t) = 0 \mbox{ or } D(t) \ge h\}$.  We note that, for any $t_0 \ge 0$,
$$
\Pr (T_* \ge t_0) \le \Pr (T_h \ge t_0) + \Pr (D(t_0 \wedge T_h) \ge h).
$$
We now give bounds on the two terms on the right above.

By~(i), the process $(D(t \wedge T_h))$ is a supermartingale, and by (ii) it is bounded between $0$ and $h+B$.
Therefore, by the Optional Stopping Theorem, we have
$D(0) \ge \E D(t_0 \wedge T_h) \ge h \Pr(D(t_0 \wedge T_h) \ge h)$, and so $\Pr (D(t_0 \wedge T_h) \ge h) \le D(0)/h$.

For $t \ge 0$, we set $G(t) = D(t)^2 - 2h D(t) - \sigma^2 t$.  We claim that $(G(t \wedge T_h))$ is a submartingale.
For $s < t \wedge T_h$, we have
\begin{eqnarray*}
\lefteqn{\E[ G(t\wedge T_h) \mid X(s) ]}  \\
&=& G(s) + \E \int_{u=s}^{t\wedge T_h} \sum_y Q(X(u),y) \Big( f(y)^2 - f(X(u))^2 \\
&&\mbox{} - 2h \big( f(y) - f(X(u)) \big) \Big) - \sigma^2 \, du
\end{eqnarray*}
As
\begin{eqnarray*}
\lefteqn{f(y)^2 - f(X(u))^2 - 2h \big( f(y) - f(X(u)) \big)} \\
&=& \big( f(y) - f(X(u)) \big)^2 - 2 \big(h - f(X(u))\big) \big( f(y) - f(X(u)) \big),
\end{eqnarray*}
we have
\begin{eqnarray*}
\lefteqn{\sum_y Q(X(u),y) \big( f(y)^2 - f(X(u))^2 - 2h \big( f(y) - f(X(u)) \big) \big)} \\
&\ge& \sum_y Q(X(u),y) \big( f(y) - f(X(u)) \big)^2 \\
&& \mbox{} - 2 \big(h - f(X(u))\big) \sum_y Q(X(u),y) \big( f(y) - f(X(u)) \big) \\
&\ge& \sigma^2
\end{eqnarray*}
for all $u < T_h$, by (i) and~(iii), and so indeed $\E[ G(t\wedge T_h) \mid X(s) ] \ge G(s)$ for $s < t \wedge T_h$.

For $t \le T_h$, we have $2 h D(t) - D(t)^2 =(2 h - D(t)) D(t) \ge 0$, as $0 \le D(t) \le h + B \le 2h$ (since $h \ge B$) for $t \le T_h$.  Thus we have, for any
$t \ge 0$, $\E (2 h D(t \wedge T_h) - D(t \wedge T_h)^2) \ge 0$, and so
\begin{eqnarray*}
2 h D(0) &=& D(0)^2 - G(0) \ge - \E G (t \wedge T_h) \\
&=& \E \big (2 h D(t \wedge T_h) - D(t \wedge T_h)^2) \big) + \sigma^2 \E (t \wedge T_h) \\
&\ge& \sigma^2 \E(t \wedge T_h).
\end{eqnarray*}
Hence we obtain, for any $t\ge 0$, $\E (t \wedge T_h) \le 2hD(0)/ \sigma^2$.  Letting $t$ tend to infinity and applying the
Monotone Convergence Theorem, we obtain the same upper bound on $\E T_h$.  Therefore, for any $t_0 > 0$,
$$
\Pr (T_h \ge t_0) \le \frac{2h D(0)}{\sigma^2 t_0}.
$$

We conclude that
$$
\Pr (T_* \ge t_0) \le \frac{2h D(0)}{t_0 \sigma^2} +  \frac{D(0)}{h}.
$$
Optimising this bound by setting $h = \sigma \sqrt{t_0/2}$ now gives, provided $t_0 \ge 2 (B \vee D(0))^2/\sigma^2$ (so that $h \ge B \vee D(0)$),
$$
\Pr (T_* \ge t_0) \le \frac{2\sqrt 2 D(0)}{\sigma \sqrt{t_0}}.
$$
If $D(0) > \sigma \sqrt{t_0/2}$, then the result is trivial, so we obtain the bound above under the condition $t_0 \ge 2 B^2/\sigma^2$.
\end{proof}

We remark that the assumption of bounded jumps cannot be dropped.  Let $(X(t))$ be a chain on $\mathbb Q$ with $Q$-matrix $Q$ given by (a)~for $x<1$, $Q(x,x/2) = 1$ and
$Q(x,x+1/x) = x^2/2$, and (b)~for $x\ge 1$, $Q(x,x+1/2) = Q(x,x-1/2) = 1$.  Then $(X(t))$ is a non-explosive jump chain satisfying conditions~(i) and~(iii) with
$\sigma^2 = 1/2$.  From a state $x < 1$, the probability that all subsequent jumps are down is equal to $\prod_{k=0}^\infty 1/(1 + x^2/2^{2k+1}) > 0$.
Thus the chain makes a.s.\ finitely many visits to $[1,\infty)$ before entering $(0,1)$ and making only downward jumps thereafter, but $(X(t))$ can never reach~0.

Alternatively, consider the chain on $\mathbb Q$ with a $Q$-matrix such that $Q(x,x+1) =1$ for all $x$, $Q(x,x/2) = 2/x$ for $x\le 2$,
and $Q(x,x-1) = 1$ for $x \ge 2$.  This chain satisfies all of (i)-(iii), with $\sigma^2 = 1$, but is explosive: starting from a state~$x\le 2$, the probability that
the chain makes infinitely many downward jumps before the first upward jump is $\prod_{k=1}^\infty 2^k/(2^k + x) > 0$.  State~0 is not reached before the
explosion time.

\section{Bernoulli--Laplace diffusion model}\label{DS}

As our first example, we re-examine the Bernoulli--Laplace chain (Feller~\cite{Feller68}, Example XV.2(f)), for which cut-off was
first established in Diaconis and Shahshahani~\cite{ds87}.  In this model, there are two urns, the left urn initially containing~$n$ red balls, and the
right urn~$n$ black balls.  Then, at each time step, a ball is chosen at random in each urn, and the two balls are switched.

The state of the system at any time~$r \ge 0$ is captured by the number $X^{(n)}(r)$
of red balls in the left urn at time~$r$.  The chain~$X\un$ can be viewed as a discrete-time lazy random walk
with state space $\{0, \dots, n\} \subset \Z$, with state-dependent transition probabilities
\begin{eqnarray*}
  \Pr[X^{(n)}(r+1) = j+1 | X^{(n)}(r) = j]  & = & (1-j/n)^2,\\
  \Pr [X^{(n)}(r+1) = j-1 | X^{(n)}(r) = j] & = & (j/n)^2;\\
  \Pr [X^{(n)}(r+1) = j | X^{(n)}(r) = j]   & = & 1 - (1-j/n)^2 - (j/n)^2.
\end{eqnarray*}

Diaconis and Shahshahani examine the total variation distance between the distribution of $X^{(n)}(r)$ and its equilibrium distribution $\pi = \pi\un$,
a hypergeometric distribution with parameters $(2n, n, n)$, defined by
$$
   \pi^{(n)} (j) \Def \binom{n}{j} \binom{n}{n-j}\Big/ \binom{2n}{n}, \quad 0 \le j \le n.
$$
Analogously to earlier, we use $\law_j$, $\pr_j$ and~$\ex_j$ to refer to distributions conditional on $X\un(0)=j$, and we
also use $\law_{\pi^{(n)}}$, $\pr_{\pi^{(n)}}$ and~$\ex_{\pi^{(n)}}$ to refer to the equilibrium distribution.

Letting $r_n(\d) := \lfloor \quarter n \log n + \d n \rfloor$, Diaconis and Shahshahani~\cite{ds87}
show that there are universal constants $C_1,C_2 > 0$ such that
\eqa
    \dtv\bigl(\law_n(X\un(r_n(\d))),\pi\un \bigr) &\ge& 1\!-\! C_1e^{4\d}, \, -\quarter\log n \le \d < 0; \non\\
    \dtv\bigl(\law_n(X\un(r_n(\d))),\pi\un \bigr)  &\le& C_2e^{-2\d}, \quad \d \ge 0.\label{DS-2}
\ena
Their proofs, especially that of~(\ref{DS-2}), are based on algebraic techniques.  Although they only consider starting from state~$n$, which is easily seen to
maximise the mixing time, their proofs extend readily to cover other starting states.  The upper bound~(\ref{DS-2}) holds for any starting state.  If the chain
is started in a state $j$ in
$$
E_n(\eps) := \{ j : |j-\frac n2| \ge \eps n\},
$$
then a minor adjustment to their proof yields a bound of the form
\begin{equation} \label{DS-1b}
\dtv\bigl(\law_j(X\un(r_n(\d))),\pi\un \bigr) \ge 1\!-\! C_3e^{4\d}\eps^{-2}, \, -\quarter\log n \le \d < 0,
\end{equation}
for some universal constant $C_3$.

Thus, in the language introduced in Section~\ref{S:intro}, we have the following result.

\begin{theorem}\label{BL-thm}
For any $\eps > 0$, the Bernoulli--Laplace chain exhibits cut-off at $\quarter n\log n$ on $E_n (\eps)$ with window width~$n$.
\end{theorem}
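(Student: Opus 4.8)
The plan is to follow the two-phase scheme outlined in the introduction. Phase~1 (lower bound / ``not yet mixed''): starting from any $j \in E_n(\eps)$, I will show that $X^{(n)}(r)$ stays concentrated around a deterministic trajectory that decays geometrically towards $n/2$, so that at times $t \le \quarter n \log n - s(\eps) n$ the chain is still noticeably far from $n/2$ while the equilibrium $\pi^{(n)}$ is concentrated in a window of width $O(\sqrt n)$ around $n/2$; a Chebyshev-type argument then gives total variation distance $> 1-\eps$. Phase~2 (upper bound / ``already mixed''): for $t \ge \quarter n \log n + s(\eps)n$ I will construct a Markovian coupling of $X^{(n)}$ started at $j$ with a copy $Y^{(n)}$ started in equilibrium, show the coupling is contractive, use the concentration inequality to confine $X^{(n)}$ near $n/2$ by the relevant time, and then apply a coalescence bound (in the spirit of Proposition~\ref{hitting}, but sharper, as the authors flag) to conclude the two chains have coalesced within $O(n)$ steps with probability $> 1-\eps$.

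For the concentration input I would work with $f(j) = j - n/2$ (so $L = 1$, $D = 1$ since each step changes the count by at most $1$), and compute the conditional drift: from state $j$, the expected change in one step is $-\rho (j - n/2)$ with $\rho$ of order $1/n$ — explicitly, the probability of moving $-1$ minus the probability of moving $+1$ works out to a linear function of $j-n/2$ with slope $\tfrac 2n$ up to lower-order terms, giving $\rho \approx 2/n$ and hence $(1-\rho)^t \approx e^{-2t/n}$, which decays to $O(1)$ precisely at $t \asymp \tfrac 14 n\log n$ when started at distance $\asymp n$. This is exactly the contractive setup of Corollary~\ref{ADB-contract-disc-lem}: with $\beta = 1$ and $\alpha_i \asymp (1-\rho)^{2i}$ one gets $a_k \lesssim 1/(2\rho) \asymp n/4$, so $X^{(n)}(k)$ is within $O(\sqrt{n\log n})$ — in fact within $O(\sqrt n \cdot \sqrt{\log(1/\eps)})$ for the relevant tail — of its mean for all $k$. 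I also need to verify the coupling $(1-\rho)$-contraction bound~\eqref{ADB-contract-disc-def} by exhibiting an explicit coupling of the two ball-exchange steps (e.g.\ couple so that, whenever the two configurations differ in a bounded number of balls, the Hamming-type distance contracts in expectation by the factor $1 - 2/n + o(1/n)$).

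The main obstacle I expect is Phase~2, the coalescence estimate, and getting the window width exactly $n$ (rather than $n\log\log n$ or similar) together with the sharp exponential tail rates that match Diaconis--Shahshahani. Once $X^{(n)}$ and the equilibrium copy $Y^{(n)}$ are both within $O(\sqrt n)$ of $n/2$, the difference process $d(X^{(n)}(t),Y^{(n)}(t))$ is a near-critical chain started at height $O(\sqrt n)$ that must hit $0$; its per-step variance is $\Theta(1)$, so the hitting time is $\Theta(n)$ — but the crude Proposition~\ref{hitting} only gives a $\phi/\sqrt{t_0}$-type bound, whereas here I want the coalescence probability to approach $1$ as the number $s$ of extra windows grows, with Gaussian-tail dependence on $s$. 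I therefore anticipate needing a bespoke argument for the Bernoulli--Laplace difference chain: identify it (up to coupling) with a lazy birth-death chain on $\{0,1,\dots\}$ with down-rate exceeding up-rate by $\Theta(1/n)$ per unit of height, and bound its absorption time by a combination of a supermartingale (drift) argument and a diffusion/CLT comparison, extracting the sharp constants. The ``travel times are nearly equal'' remark for the $t_n$-version of cut-off will then follow from the Phase~1 concentration: $t_n(j) = \tfrac 14 n\log n + O(n)$ uniformly over $E_n(\eps)$, so the $x$-independent form applies. Finally I would assemble the pieces: the lower bound from Phase~1 and the upper bound from Phases~1--2 together yield~\eqref{ADB-cutoff} with $t_n = \tfrac14 n\log n$, $w_n = n$, and the constants $s(\eps)$ determined by the Gaussian tails appearing in both halves.
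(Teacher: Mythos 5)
Your proposal is correct in outline and would prove Theorem~\ref{BL-thm} as stated, but your Phase~2 is a genuinely different route from the paper's. Your Phase~1 (concentration via the adjacent-state coupling with $\rho=2/n$, the explicit mean recursion, and a Chebyshev separation from the hypergeometric equilibrium) is essentially identical to the paper's lower-bound argument. For the upper bound, however, the paper does \emph{not} run a coalescence/hitting-time argument on the difference chain: instead it recentres the chain as $Y(r)=X(r_n(0)+r)-n/2$, compares the law of the whole path of $Y$ to a linearized surrogate $\tY$ (an Ehrenfest-type chain with transition probabilities \eqref{tilde-Y-trans}) by a likelihood-ratio martingale estimate, couples two copies of $\tY$ via an explicit ball-pairing construction to get total variation decay of order $e^{-2r/n}$, and then passes back to the true equilibrium by a triangle inequality through $\tY$'s binomial stationary law. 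The point of that extra machinery is precisely to recover the sharp Diaconis--Shahshahani rate \eqref{DS-2}; the paper explicitly remarks that your route (confine both copies near $n/2$, then let the difference walk, which has step variance $\Theta(1)$ and nonpositive drift, hit $0$ from height $O(\sqrt n)$ in $O(n)$ steps) is ``relatively straightforward'' and yields coalescence by time $r_n(0)+sn$ with probability $1-O(s^{-1/2})$. Since the cut-off definition \eqref{ADB-cutoff} only requires the error to vanish as $s\to\infty$, that $O(s^{-1/2})$ bound already suffices for the theorem as stated, so the ``bespoke sharp-tail argument'' you anticipate is only needed if you also want the exponential rates \eqref{DS-1}--\eqref{DS-2}. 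Two points to nail down in your write-up: (i) the coupling used in the coalescence phase must be the diffusive one you describe (independent or mirror moves while the copies differ), not the synchronous contractive coupling used for the concentration input -- under the latter the distance only shrinks at rate $\Theta(d/n)$ per step, so coalescence from distance $\Theta(\sqrt n)$ would take $\Theta(n\log n)$ steps and ruin the window width $n$; and (ii) you need a uniform-in-time confinement estimate (as in \eqref{ADB-conc-process}) to keep both copies in the region where the difference-chain comparison is valid, plus a small fix for the possibility of the difference jumping over $0$ (e.g.\ switch to the adjacent-state coupling once the distance reaches $1$, which costs only a further $O(n)$ steps).
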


We use the results of the previous sections to give an alternative, coupling proof of Theorem~\ref{BL-thm}, yielding the bounds in the result below.

\begin{theorem} \label{thm.BL2}
Let $X^{(n)}(r)$ be a copy of the Bernoulli-Laplace chain.
For $\delta \in \R$, set $r_n(\d) := \lfloor \quarter n \log n + \d n \rfloor$.

(a) For $-\quarter\log n \le \d < 0$, we have
\begin{equation*} 
\dtv\bigl(\law_j(X\un(r_n(\d))),\pi\un \bigr) \ge 1\!-\! 4\exp\Big( - \frac{\eps^2}{32} e^{-4\d}\Big),
\end{equation*}
for any $\eps > 0$, any $j \in E_n(\eps)$, and $n \ge 4$.

(b) For $0 \le \d \le \quarter \log n - \log \log n$, we have
\begin{equation} \label{DS-2a}
\dtv\bigl(\law_j (X\un(r_n(\d))),\pi\un \bigr) \le 21 e^{-2\d},
\end{equation}
for any $j \in \{ 0, \dots, n\}$, and $n$ sufficiently large.
\end{theorem}

Thus our upper bound in Theorem~\ref{thm.BL2}(b) matches that of Diaconis and Shahshahani in (\ref{DS-2}), except that our proof requires a mild upper bound on $\d$,
and our lower bound in part~(a) improves on~(\ref{DS-1b}).  The inequalities above are more than enough to imply Theorem~\ref{BL-thm}.

Extensions and generalisations of the result of Diaconis and Shahshahani have also been obtained.
For instance, Donnelly, Lloyd and Sudbury~\cite{DLS94} showed cut-off for the separation distance mixing time for this model,
and recently Eskenazis and Nestoridi~\cite{EN} showed cut-off for the version where $k>1$ balls are exchanged at each step.  All of these papers
make some use of algebraic techniques.

\medskip

We now give a brief overview of our proof of Theorem~\ref{thm.BL2}.  The first step is to use our discrete-time concentration of measure inequality,  Theorem~\ref{ADB-contract-disc-lem}(a), to show that, for any starting state $j=X^{(n)}(0)$ and any $r$, $X^{(n)}(r)$ is well-concentrated around its mean.
An easy estimate for the mean then shows that, with high probability, $X^{(n)}(r)$ is far from $n/2$ for $r \le r_n(0)$, and this is enough to give part~(a).

The proof of~(b) is more complicated.  The concentration of measure result shows that $X^{(n)}(r)$ is unlikely to leave a neighbourhood of $n/2$ for a long period of
time after $r_n(0)$; while it is in this neighbourhood, we can approximate the transitions of the chain by the transitions of a simpler chain whose long-term behaviour is easy to analyse, and show that the two chains therefore have approximately the same distributions over a suitably long time interval.

\medskip

We proceed by stating and proving a sequence of lemmas.
In what follows we drop the superscript~$(n)$, writing $X(r)$ instead of $X\un(r)$, to lighten the notation.


\begin{lemma} \label{lemma1}
Let $X(r) = X^{(n)}(r)$ be a copy of the Bernoulli-Laplace chain, with $n \ge 4$.
For all starting states $j \in \{ 0, \dots, n\}$, all $r \in \Z_+$, and all $c$ with $0 \le c \le 3 \sqrt n /4$, we have
$$
   \Pr_j (|X(r) - n x_j(r) | \ge c \sqrt n ) \Le 2 e^{-c^2/2},
$$
where
\begin{equation} \label{ADB-new-mean}
x_j(r) = \E_j X(r) / n = \Big ( \frac{j}{n} - \frac12 \Big ) \Big (1 - \frac{2}{n} \Big )^r + \frac12.
\end{equation}
\end{lemma}

\begin{proof}
%
Our plan is to use Theorem~\ref{ADB-contract-disc-lem}, and accordingly our first step is to describe a contractive coupling.

We fix $n \ge 4$, and $j_0 \in \{0, \ldots, n-1\}$, and let $(X^{1}(r))$ and $(X^{2}(r))$ be two copies of the chain starting
in $j_0$ and $j_0+1$ respectively.  We describe a coupling of the chains such that $|X^1(r) - X^2(r)|$ remains equal to~1 until dropping to~0.
When the two chains are in adjacent states $j$ and $j+1$ with $1 \le j \le n-2$, say with $X^{1}(r) = j$ and $X^{2}(r) = j+1$, then the next step of
the coupling is as follows.  The two chains
jump together up by 1 with probability $(1-(j+1)/n)^2$ and down by 1 with probability $(j/n)^2$.
Additionally, the lower chain $X^{1}(r)$ jumps up by 1 alone with probability $(1-j/n)^2 - (1-(j+1)/n)^2 = (2n-2j-1)/n^2$, and the higher chain $X^{2}(r)$
jumps down by 1 alone at rate $((j+1)/n)^2 - (j/n)^2 = (2j+1)/n^2$.  This leaves probability $\frac{1}{n^2}\big( (n-j)^2 + (j+1)^2 \big)$
that both chains stay in their current state.  Note that indeed $X^2(r+1) - X^1(r+1)$ is either~1 or~0, and that
$$
\Pr(X^2(r+1) = X^1(r+1) \giv X^1(r) = j, X^2(r) = j+1)
$$
$$
= \frac{2n-2j-1}{n^2} + \frac{2j+1}{n^2} = \frac{2}{n},
$$
for $1\le j \le n-2$.

The rules above do not define a coupling in the case where $j=0$ or $j=n-1$.  In the case $j=0$, for instance, $X^1(r)$ jumps from~0 to~1 with probability~1,
and $X^2(r)$ jumps to one of~0, 1, or 2 with probabilities $(1/n)^2$, $2/n - 2/n^2$, and $(1-1/n)^2$ respectively.  There is thus no {\em monotone} coupling
possible.  However, when $X^1(r) = 0$ and $X^2(r) = 1$, the next step of the coupling is forced since $X^1(r+1) = 1$ with probability~1, and it is still the case
that $|X^2(r+1) - X^1(r+1)|$ is either~1 or~0.  We have
$$
\Pr(X^2(r+1) = X^1(r+1) \giv X^1(r) = 0, X^2(r) = 1) = \frac{2}{n} - \frac{2}{n^2},
$$
and similarly for $j=n-1$.  Hence our coupling is contractive with constant $\rho = 2/n - 2/n^2$.

We take $f(x) =x$ in Theorem~\ref{ADB-contract-disc-lem}(a), with $S = \{0, \dots, n\}$, $d(x,y) = |x-y|$, $L = D = D_2 = 1$, and
$\r = 2/n - 2/n^2$, so that $2/(2\r - \r^2) \le n$ for all $n \ge 4$.
Then, by Theorem~\ref{ADB-contract-disc-lem}(a), for all $j \in \{ 0, \dots, n\}$, all $r \in \Z_+$, and all $m > 0$, we have
$$
   \Pr_j (|X(r) - \E_j X(r) | \ge m ) \Le 2 e^{-m^2/(n + 4 m/3)}.
$$
If we set $m = c \sqrt n$, for $0 \le c \le 3 \sqrt n /4$, we obtain that $n +4m/3 \le 2n$, and so
$$
   \Pr_j (|X(r) - \E_j X(r) | \ge c \sqrt{n} ) \Le 2 e^{-c^2/2}.
$$

To complete the proof, it remains to verify the formula for $x_j(r) := \E_j X(r)/n$.  Observe that
$$
   \adbng{\E_j X(r+1) \Eq \E_j X(r) + \E_j (1-X(r)/n)^2 - \E_j (X(r)/n)^2,}
$$
so that
$$
    x_j(r+1) \Eq 1/n + x_j(r) (1-2/n),
$$
and hence
$$
   x_j(r) \Eq \Big ( \frac{j}{n} - \frac12 \Big ) \Big (1 - \frac{2}{n} \Big )^r + \frac12,
$$
as claimed
\end{proof}


A matching tail bound for the equilibrium distribution $\pi^{(n)}$ follows from Lemma~\ref{lemma1}.  In fact, unsurprisingly, sharper tail bounds on the
hypergeometric distribution are known: results of Hoeffding~\cite{hoeffding} (see Section~6 and Theorem~1) imply that, for any $c \ge 0$,
\begin{equation} \label{Chv}
\Pr_{\pi^{(n)}}(|X - \frac n 2| \ge c \sqrt n) \le 2 e^{-2c^2}.
\end{equation}
An alternative proof was given by Chv\'atal~\cite{Chv}.

It is now not hard to obtain the claimed lower bound on total variation distance for $r < \quarter n \log n$.

\medskip

\begin{proof} [Proof of Theorem~\ref{thm.BL2}(a)]
For $r = r_n(\delta) = \lfloor \quarter n \log n + \delta n \rfloor$,
and $\delta <0$, we have seen that both $X(r)$ and the equilibrium distribution are well-concentrated around their respective means.  We will show that,
if $j = X(0)$ is in $E_n(\eps)$ for some fixed $\eps >0$, so that $| j - \frac n2 | \ge \eps n$, then the means are still far apart at time~$r$.


From~(\ref{ADB-new-mean}), we have that, uniformly in $ -\quarter\log n \le \d \le 0$,
\begin{eqnarray}
   \Big |x_j(r_n(\delta)) - \frac12  \Big | &\ge& \eps \Big (1 - \frac{2}{n} \Big )^{\quarter n\log n + \d n}
           \ \ge\ \eps e^{2|\d|}\Big (1 - \frac{2}{n} \Big )^{\quarter n\log n} \nonumber \\
           &\ge& \half \eps n^{-1/2}e^{2|\d|}, \label{ADB-mean-bounds-lower}
\end{eqnarray}
for all $n \ge 4$ (so that $n^{1/2}\Big (1 - \frac{2}{n} \Big )^{\quarter n\log n} \ge 1/2$).

For fixed $\eps >0$ and $\delta$ with $ -\quarter\log n \le \d \le 0$, we set
$$
A := \Bigl[\frac{n}{2} - \frac14 \eps e^{2|\delta|} n^{1/2}, \frac{n}{2} + \frac14 \eps e^{2|\delta|} n^{1/2}\Bigr].
$$
By~(\ref{Chv}), we have
$$
  1- \pi^{(n)}(A) = \Pr_{\pi^{(n)}} \Bigl(|X - \E_{\pi^{(n)}} (X)| > \frac14 \eps e^{2|\delta|} n^{1/2} \Bigr)
\le 2 \exp( -\frac18 \eps^2 e^{4|\delta|}).
$$

Similarly, using \Ref{ADB-mean-bounds-lower} and Lemma~\ref{lemma1}, we have that, for any $j \in E_n (\eps)$,
\begin{eqnarray*}
       \Pr_j (X(r_n(\delta)) \in A) &\le& \Pr_j \Bigl(|X(r_n(\delta)) - \E_j (X(r_n(\delta)))| > \frac14 \eps e^{2|\delta|} n^{1/2} \Bigr) \\
       & \le& 2 \exp( -\frac{1}{32} \eps^2 e^{4|\delta|} ),
\end{eqnarray*}
for all $n \ge 4$.  Hence we have
\begin{eqnarray*}
   \dtv (\pi^{(n)}, {\mathcal L}_j (X(r_n(\delta))) &\ge&  \pi^{(n)} (A) - \Pr_j (X(r_n(\delta)) \in A) \\
    &\ge& 1 - 4 \exp( - \frac{1}{32} \eps^{2} e^{4|\delta|}),
\end{eqnarray*}
uniformly in $ -\quarter\log n \le \d \le 0$, which is the required result.
\end{proof}

Our proof of the lower bound above is actually very similar to that of Diaconis and Shahshahani: we have obtained an improved result by using
Lemma~\ref{lemma1}, giving Gaussian concentration for $X(r_n(\delta))$, instead of appealing to Chebyshev's inequality.

We now turn to the upper bound.  We start by using Lemma~\ref{lemma1} to show that, for a long period beyond time $r_n(0) = \lfloor \frac14 n \log n \rfloor$, the process
$X(r)$ is unlikely to leave an interval of width $C \sqrt{n \log n}$ around $n/2$.

\begin{lemma}\label{lemma2}
For $n \ge 2e^4$, any $s \in \Z_+$, and any starting state~$j$,
$$
    \Pr_j \Bigl(\max_{0\le r\le s}|X(\adbng{r_n(0)+r}) - n/2| \ge \adbng{4}\sqrt{\tfrac n2 \log(\tfrac n2 )} \Bigr)
              \Le 16 (s+1)n^{-3}.
$$
\end{lemma}

\begin{proof}
For $t \ge r_n(0) = \lfloor \frac14 n \log n \rfloor$ and any starting state $j$, we have from~(\ref{ADB-new-mean}) that
\begin{equation*}  
    \Big | x_j(t) - \frac12 \Big | \Le \half \left( 1 - \frac2n \right)^t \Le \half e^{-\frac2n (\frac14 n \log n - 1)} = \half n^{-1/2} e^{2/n}
    \Le \frac34  n^{-1/2},
\end{equation*}
for all $n \ge 5$.

Therefore, at times $r_n (0)+r$, $r\ge0$, for any starting state~$j$ and for $n \ge 5$, we have
\eq\label{ADB-second-mean-bnd}
    |\E_j X(\adbng{r_n(0)+r}) - \tfrac n2 | \Le \tfrac34  n^{1/2}.
\en
Combining this with Lemma~\ref{lemma1}, we have
for $c \le 3 \sqrt{n}/4$,
$$
    \Pr_j (|X(\adbng{r_n(0)+r}) - n/2| \ge (c + 3/4) \sqrt{n}) \Le 2 e^{-c^2/2}, \quad \adbng{r\ge0}.
$$
We apply this inequality with $c = 4\sqrt{\half\log(n/2)}-3/4$, which is greater than $\sqrt{6 \log(n/2)}$ for $n > 2e^4$ (since $2\sqrt 2 -3/8 > \sqrt 6$), and deduce that
$$
    \Pr_j \Bigl(|X(\adbng{r_n(0)+r}) - n/2| \ge \adbng{4}\sqrt{\tfrac n2 \log(\tfrac n2 )} \Bigr)
              \Le 16 n^{-3}, \quad r\ge0.
$$
The required result now follows.
\end{proof}

We remark here that it would be relatively straightforward to complete the proof of cut-off at this point: we can exhibit a coupling
between two copies of the chain both remaining close to $n/2$, such that the distance between the two copies is stochastically dominated by a simple
lazy random walk -- such a proof would show quickly that the two copies coalesce by time $r_n(0) + \delta n$ with probability $1 - O(\delta^{-1/2})$.
(A similar argument is used by Eskenazis and Nestoridi~\cite{EN}, based on a discrete-time analogue of Proposition~\ref{hitting}.)
In order to establish the bound~(\ref{DS-2a}), we need a more precise argument.

For the moment we assume, for simplicity of exposition, that $n = 4k$ for some positive integer~$k$.
We consider the walk $Y=Y^{(n)}$ defined by $Y(r) = X(r_n(0)+r) - n/2 = X(r_n(0)+r) - 2k$, $r\ge0$,
which describes the evolution of~$X$ beyond the time~$r_n(0)$.  The transitions of this walk are given by:
\begin{eqnarray}
  p_{j,j+1} & := & \Pr[Y(r+1) = j+1 | Y(r) = j] \Eq \frac14 - \frac j{4k} + \Bigl (\frac j{4k}\Bigr )^2; \non \\
  p_{j,j-1} & := & \Pr [Y(r+1) = j-1 | Y(r) = j] \Eq \frac14 + \frac j{4k} + \Bigl(\frac j{4k}\Bigr)^2;
        \non \\
  p_{j,j} & := & \Pr[Y(r+1) = j | Y(r) = j] \Eq \frac12 - 2\Bigl(\frac j{4k}\Bigr)^2, \label{Y-trans}
\end{eqnarray}
for $-2k \le j \le 2k$.

At least when $j/4k$ is small, $Y$ has transition probabilities close to those of the simpler process $\tY := (\tY^{(n)}(r),\,r\ge0)$,
with $\tY(0) = Y(0)$, and transition probabilities given by
\begin{eqnarray}
  \tp_{j,j+1} & = & \Pr[\tY(r+1) = j+1 | \tY(r) = j] \Eq  \frac14 - \frac j{4k} ;\non \\
  \tp_{j,j-1} & = &  \Pr[\tY(r+1) = j-1 | \tY(r) = j] \Eq \frac14 + \frac j{4k} ;
         \label{tilde-Y-trans}\\
  \tp_{j,j} & = &  \Pr [\tY(r+1) = j | \tY(r) = j] \Eq  \frac12 .\non
\end{eqnarray}
We shall use~$\tY$ as a surrogate for~$Y$ in the argument to come.

\ignore{
Starting with $\tY(0) = Y(0)$, the distribution of~${\tilde Y}(r)$ can be described as follows.
First, let~$M_r$ denote the number of balls that have {\it not\/} been drawn up to and
including step~$r$.  Conditional on~$M_r = m$,
$k+{\tilde Y}_r$ has the distribution of $Z_{m1} + Z_{m2}$, where $Z_{m1}$ and~$Z_{m2}$ are independent,
$Z_{m1}$ has the hypergeometric distribution $\HG(m,k+Y_0;2k)$ and $Z_{m2} \sim \Bi(2k-m,1/2)$.
\adb{In particular, as $r \to \infty$, $M_r \to 0$ a.s., showing that} the equilibrium distribution
of~${\tilde Y}$ is $\Bi(2k,1/2)* \delta_{-k}$.
\adb{Now the distribution of $Z_{m1} + Z_{m2}$ can be realized as that of a sum of
negatively associated indicator random variables \adbn{(Joag--Dev and Proschan (1983))}, from which it follows
that $\law(\tY_r | M_r=m)$  is concentrated about its mean. Indeed,
applying Proposition~7 of Dubhashi and Ranjan~(1998), we deduce that, for any $y > 0$ and any $m, r \ge 0$,
\[
   \Pr[|\tY_r - mY_0/(2k)| > y \giv M_r=m] \Le 2\exp\{-y^2/2(k + m|Y_0|/(2k) + y/3)\}.
\]
Since $0\le m\le 2k$ a.s., it follows that $|mY_0/(2k)| \le |Y_0|$.
Hence, for any $\a > 0$, and uniformly in $|Y_0| \le 3\sqrt{2k\log 2k}$ and $0\le m\le 2k$,
\[
   k + m|Y_0|/(2k) + (\a/3)\sqrt{2k\log 2k} \Le 2k,
\]
provided that $k/(\log 2k) \ge 2(3 + \a/3)^2$, giving
\eqs
   \lefteqn{ \Pr[|\tY_r| > (\alpha +3) \sqrt{2k\log 2k} \giv M_r=m] }\\
    &&\Le \Pr[|\tY_r - mY_0/(2k)| > \alpha \sqrt{2k\log 2k} \giv M_r=m] \Le 2(2k)^{-\a^2/2}.
\ens
Hence, for $k/(\log 2k) \ge 2(3 + \a/3)^2$ and $|Y_0| \le 3\sqrt{2k\log 2k}$, we have}
\eq
\label{tilde-Y-Chernoff}
   \Pr[\max_{0\le r\le s}|\tY_r| > (\alpha +3) \sqrt{2k\log 2k}] \Le 2s(2k)^{-\a^2/2}.
\en
In what follows,
we shall use $\a = 2$, and assume that $k \ge k_1 := \adbn{200}$.}

The similarity of the transition probabilities \eqref{Y-trans} and~\eqref{tilde-Y-trans}, together
with Lemma~\ref{lemma2}, is next used to show that, with high probability,
the processes $Y$ and~$\tY$ are almost indistinguishable for a long time.

For a sequence $y := (y(r),\,r\ge0)$, we denote the initial segment up to time~$s$ by $y([0,s]) := (y(0),y(1),\ldots,y(s))$.

\begin{lemma} \label{lemma3}
For $n = 4k \ge 8000$, and $\displaystyle s \le \frac{k^2}{2500 \log^2 (2k)}$, we have
$$
    \dtv(\law(Y([0,s])),\law(\tY([0,s]))) \Le 100 s^{1/2}k^{-1}\log (2k).
$$
\end{lemma}

\begin{proof}
For a sequence $y := (y(r),\,r\ge0)$ such that the $y(r)$ are integers with $|y(r) - y(r-1)| \le 1$ for all~$r \ge 1$, let the {\em likelihood ratio} of the
process~$\tY$ compared to~$Y$ on the segment $y([0,s])$ be given by
\[
    \L(y([0,s])) \Def \prod_{r=1}^s \adbng{\frac{\tp_{y(r-1),y(r)}}{p_{y(r-1),y(r)}}}.
\]

For $k \ge 2000$, we set $\e_k =  5 \sqrt{2\log (2k)/k}$, and note that $\eps_k \le 1/2$.
If $|j|/k \le \e_k$, we then have, from the formulae for the transition probabilities, that
\eq\label{ADB-ratio-bnds}
  \max\Blb \Bigl| \frac{\tp_{j,j+1}}{p_{j,j+1}} - 1 \Bigr|,
    \Bigl| \frac{\tp_{j,j-1}}{p_{j,j-1}} - 1 \Bigr|,
     \Bigl| \frac{\tp_{j,j}}{p_{j,j}} - 1 \Bigr| \Brb \Le \half\e_k^2,
\en
so that, if $\L(y([0,s])) \le 2$, it follows that
\eq\label{ADB-quadratic-bnd-0}
     (\L(y([0,s+1])) - \L(y([0,s])))^2 \Le (\adbng{\half}\L(y([0,s]))\e_k^2)^2 \Le  \e_k^4.
\en

Replacing~$y$ by a path of~$Y$, we note that
$(\L(Y([0,s])),\,s\ge0)$ is a martingale.
Defining
\[
   \tau \Def \inf \Big \{s\ge0: \{\L(Y([0,s])) > 2\} \cup \{|Y(s)| > 5 \sqrt{2k\log (2k)}\} \Big \},
\]
it follows from~\Ref{ADB-quadratic-bnd-0} that the quadratic variation of the martingale
$\L(Y([0,r]))$ until time
$s \wedge \t$ is at most $s \e_k^4$. Since also $\ex\L(Y([0,s\wedge\t])) = 1$, it follows
from the Burkholder--Davis--Gundy inequality that
\eq\label{quad-variation}
  \ex \Big \{\big(\L(Y([0,s\wedge\t])) - 1\big)^2 \Big \} \Le  s \e_k^4.
\en

Define the events $A_s$ and $B_s$ by
\[
    A_s \Def \{\L(Y([0,s])) < 1\}; \quad B_s \Def \{\t > s\}.
\]
Then
\eqs
\lefteqn{\dtv(\law(Y([0,s])),\law(\tY([0,s]))) = \ex\{I[A_s](1- \L(Y([0,s])))\} \phantom{fyawevraywt}}\\
    &\le& \pr[\overline{B_s}] + \ex\{I[A_s \cap B_s](1- \L(Y([0,s])))\},
\ens
and, on~$B_s$, $s = s\wedge\t$.  Hence, 
\eqs
   \dtv(\law(Y([0,s])),\law(\tY([0,s]))) &\le & \pr[\overline{B_s}] + \ex\{(1- \L(Y([0,s\wedge\t])))_+\} \\
      &=& \pr[\overline{B_s}] + \frac12 \ex|1- \L(Y([0,s\wedge\t]))| \\
      &\le& \pr[\overline{B_s}] + \frac12 s^{1/2}\e_k^2.
\ens
From~\eqref{quad-variation} and Kolmogorov's inequality, and from Lemma~\ref{lemma2}, we have
\[
     \pr[\overline{B_s}] \Le s \e_k^4 + \tfrac1{4} (s+1)k^{-3} \le \tfrac54 s \e_k^4.
\]
Hence, for $s \le \e_k^{-4}$, we have
\begin{equation}\label{ADB-Y-to-Y-tilde}
    \dtv(\law(Y([0,s])),\law(\tY([0,s]))) \Le 2 s^{1/2}\e_k^2 
        = 100 s^{1/2}k^{-1}\log 2k,
\end{equation}
as required.
\end{proof}

Thus, with error at most $100 s^{1/2}k^{-1}\log 2k$, we can replace $Y([0,s])$ by~$\tY([0,s])$ when
calculating probabilities, and make only a small error if  $s \ll (k/\log k)^2$.
Recalling that $n=4k$, this means that the approximation of $Y$ by~$\tY$ is asymptotically accurate over time
intervals of length $o\bigl((n/\log n)^2\bigr)$.

We now use a coupling argument to show how fast~$\tY$ converges to its equilibrium
distribution~$\tipi\uk$.

\begin{lemma} \label{lemma4}
For any $k\ge 1$ and $r \ge 4k$, we have
$$
\dtv \Bigl(\law(\tY(r)),\tipi\uk\Bigr) \Le (k^{-1/2}\E|Y(0)| + 2) e^{-r/2k}.
$$
\end{lemma}

\begin{proof}
First, we note that the process~$\tY$ can equivalently be described by way of a discrete Ehrenfest ball scheme.
There are~$2k$ balls, each of which is in state $0$ or~$1$.  At each step, a ball is chosen independently at random from
the~$2k$ balls, and its state is chosen to be $0$ or~$1$, each with probability~$1/2$, independently
of the whole past of the process.  If~$k+j$ balls are in state~$1$ and $k-j$ in state~$0$
at step~$r$, we say that $\tY(r) = j$;  then the probabilities for~$\tY(r+1)$ are
easily seen to be given by~\eqref{tilde-Y-trans}, and its equilibrium distribution~$\tipi\uk$
to be $\Bi(2k,1/2)*\delta_{-k}$.

We now define a coupling of two copies $\tY^1$ and~$\tY^2$ of the process~$\tY$, with $\tY^1(0) \ge \tY^2(0)$.
Pair the balls in the two processes so that those initially in state~$1$ in~$\tY^2$ are paired with
balls in state~$1$ in~$\tY^1$, and  those initially in state~$0$ in~$\tY^1$ are paired with
balls in state~$0$ in~$\tY^2$; then pair the remaining $\tY^1(0) - \tY^2(0)$ balls in the two processes.
Couple the evolution by selecting one of these pairs of balls at each step, and re-assigning its state
independently (the new state being the same for both $\tY^1$ and $\tY^2$).
Let~$M(r)$ denote the number of pairs of balls that have not been
drawn up to step~$r$, made up of $M_{1}(r)$ in state~$1$, $M_{0}(r)$ in state~$0$,
and of~$M_{2}(r) = M(r) - M_{1}(r) - M_{0}(r)$ from the $\tY^1(0) - \tY^2(0)$
pairs of balls with differing initial states.  Conditional on $M_{0}(r)$, $M_{1}(r)$ and~$M_{2}(r)$, we have
\[
     \tY^1(r) = Z(r) + M_1(r) + M_2(r) - k \quad  \mbox{ and } \quad \tY^2(r) = Z(r) + M_1(r) -k,
\]
where $Z(r)$ has distribution
$\Bi(2k-M(r),1/2)$.  Now, since the distribution $\Bi(m,1/2)$ is unimodal with mode~$\lfloor m/2 \rfloor$,
we have, for all $m \ge 1$, that
\[
     \dtv(\Bi(m,1/2),\Bi(m,1/2)*\delta_1) \Eq \Bi(m,1/2)\{\lfloor m/2 \rfloor\} \ <\ \frac1{\sqrt m}.
\]
It follows that
\eqs
   \lefteqn{\dtv \Big ({\mathcal L} (\tY^1(r) | M_{0}(r),M_{1}(r),M_{2}(r)),
                  {\mathcal L} (\tY^2(r) | M_{0}(r),M_{1}(r),M_{2}(r)) \Big )}\\
        &&\Le \min\{1,M_{2}(r)(2k-M(r))^{-1/2}\}  \Le  M_{2}(r) k^{-1/2} + I[M(r) > k],
\ens
implying that
\eq\label{ADB-Y12-bnd}
   \dtv \Bigl(\law(\tY^1(r)),\law(\tY^2(r)) \Bigr)
        \Le k^{-1/2}\E M_{2}(r) + \pr[M(r) > k].
\en
Now $\pr(M(r) > k)$ is the probability that all the $k$ draws come from some subset of $k$ of the $2k$ matched pairs of balls,
and so, for $r \ge 4k$,
$$
\pr(M(r) > k) \le \binom{2k}{k} 2^{-r} \le 2^{2k-r} \le 2^{-r/2}.
$$
We also have $\ex M_{2}(r) = (\tY^1(0) - \tY^2(0))(1-1/(2k))^r \le (\tY^1(0) - \tY^2(0)) e^{-r/2k}$.
Hence, allowing either ordering of $\tY^1(0)$ and~$\tY^2(0)$, it follows from~\Ref{ADB-Y12-bnd}
that
\eq\label{ADB-Y12-bnd-2}
    \dtv \Bigl(\law(\tY^1(r)),\law(\tY^2(r)) \Bigr)
        \le k^{-1/2}|\tY^1(0) - \tY^2(0)| e^{-r/2k} + 2^{-r/2}.
\en
Setting $\tY^1(0) = Y(0)$, and taking~$\tY^2(0) \sim \tipi\uk$ to be in equilibrium,
we deduce, by taking expectations in~\Ref{ADB-Y12-bnd-2}, that
\begin{eqnarray*}
    \dtv \Bigl(\law(\tY^1(r)),\tipi\uk\Bigr) &\Le&
         \{k^{-1/2}(\adbng{\E}|Y(0)| + \sqrt{k/2})\} e^{-r/2k} + 2^{-r/2} \\
         &\le& (k^{-1/2} \E|Y(0)| + 2) e^{-r/2k},
\end{eqnarray*}
as desired.
\end{proof}

\begin{proof} [Proof of Theorem~\ref{thm.BL2}(b)]
We combine Lemma~\ref{lemma3} with Lemma~\ref{lemma4},
replacing $Y(r)$ by $X(r_n(0)+r) - n/2$, to deduce that, for any~$j$,
\eqa
    \lefteqn{\dtv \Bigl(\law(X(r_n(0)+r) - n/2),\tipi\uk\Bigr)} \non\\
     &\Le& ( k^{-1/2}\E_j |X(r_n(0)) - n/2| + 2 ) e^{-2r/n} + 100 r^{1/2}k^{-1}\log 2k \non\\
     &\Le&  4 e^{-2r/n} + 400 r^{1/2}n^{-1}\log n,  \label{ADB-dtv-binomial}
\ena
where we have used~\Ref{ADB-second-mean-bnd} to reach the last inequality,
provided $8000 \le 4k = n \le r \le n^2/40000\log^2(n/2)$.

The bound in~\Ref{ADB-dtv-binomial}
remains valid for {\it any\/} initial distribution; taking $X(0) \sim \pi\un$, so that also $X(r_n(0)) \sim \pi\un$,
this implies that
\[
   \dtv \Bigl(\pi\un * \delta_{-n/2},\tipi\uk\Bigr)
      \Le 4 e^{-2r/n} + 400 r^{1/2}n^{-1}\log n.
\]
also.  (The bound above is valid for any $r$, and is minimised for $r$ of order $n \log n$.  One could obtain a stronger bound, of order $n^{-1}$, by
direct computation, but this is rather delicate and the gain is not relevant to us.)

Hence, for $n$ a sufficiently large multiple of~4, and $n \le r \le \quarter n \log n - n \log \log n$, we have,
\eqa
   \dtv \Bigl(\law_n(X(r_n(0)+r)),\pi\un\Bigr) &\le&
      2\{ 4e^{-2r/n} + 400 r^{1/2}n^{-1}\log n\} \non\\
   &\le& 10 e^{-2r/n}. \label{ADB-DS-upper}
\ena
This bound also holds trivially for $r \le n$.
Taking $r = \d n$, this proves the result in the case where $n$ is a multiple of~4.

If~$n$ is not divisible by~$4$, the argument remains almost the same.
Define $k := \lfloor n/4 \rfloor$, and set $Y(r) := X(r_n(0)+r) - 2k$, as above.  The transition rates for~$Y$ are not quite as
in~\Ref{Y-trans}, but they are very close, resulting only in an extra contribution of order~$O(k^{-1})$
to the bounds in~\Ref{ADB-ratio-bnds}.
This correction is of smaller order than $\e_k^2$, and can be absorbed into the bound~\Ref{ADB-Y-to-Y-tilde} provided $k$ is sufficiently large.
The rest of the proof is unchanged.
\end{proof}

Diaconis and Shahshahani~\cite{ds87}, and other authors, actually consider a more general version, with boxes
of unequal sizes.  The first box initially contains~$n'$ red balls, and the second $2n-n'$
black balls.  The mixing process runs as before.  Our approach can be used for
this model as well.  The jump probabilities for the process counting the number~$X$ of red balls
in the first box are again quadratic in the current state~$j$ of the process.
When evaluated close to the equilibrium mean $n'p$, where $p := n'/2n$, these probabilities are close
to the linear jump probabilities near equilibrium of another process~$\tY$ consisting of~\grbc{$\ell$} balls, coloured
red or black, with the following dynamics.  At each time step, a ball is chosen.  It is left
with unchanged colour with probability $1-\th$; otherwise, it is re-coloured red with probability~$q$
and black with probability $1-q$, independently of everything else (so that its colour may in fact
still be unchanged). Then $\tY(r)$ denotes the number of red balls at time~$r$.
The values of $\ell,\th$ and~$q$ to best match the original process are found to be
$$
   q := 2p(1-p);\quad \th := \frac1{2(1 - 2p(1-p))} \quad \mbox{ and }\quad
   \ell := \left\lfloor \frac{np(1-p)}{1-2p(1-p)} \right\rfloor;
$$
note that, for $n'=n$, as previously, we have $p=1/2 = q$, $\th = 1$ and $\ell = \lfloor n/2 \rfloor$,
corresponding to the approximation made before.  With these modifications, an analogous argument can
be carried out, to establish cut-off.

\section{A two host model of  disease}\label{parasites}

Our next example is a two-dimensional Markov chain~$\wX\un$ in continuous time, representing
a two host model of disease, in which transmission only occurs between one host type
and the other (snails and human beings in schistosomiasis (Jordan, Webbe and Sturrock~\cite{JWS})), or males and females in
sexually transmitted diseases (Hethcote and Yorke~\cite{HY84})).  Our framework is appropriate for a disease that is not
naturally endemic in a region, being supported at a low level through immigration from outside.
In state $\bx := (x_1,x_2)^T \in \Z_+^2$, there are~$x_1$ type-$1$ hosts and~$x_2$ type-$2$ hosts infected.
From any state $\bx$, there are four possible transitions, whose rates are as follows:
\begin{eqnarray}
  (x_1,x_2) &\to& (x_1+1,x_2)\ \mbox{ at rate }\ \a x_2 +  \m n \non\\
  (x_1,x_2) &\to& (x_1,x_2+1)\ \mbox{ at rate }\ \b x_1 + \n n \non\\
  (x_1,x_2) &\to& (x_1-1,x_2)\ \mbox{ at rate }\ \g x_1  \non\\
  (x_1,x_2) &\to& (x_1,x_2-1)\ \mbox{ at rate }\ \d x_2. \label{ADB-new-rates}
\end{eqnarray}
Here, $\alpha$, $\beta$, $\gamma$, $\d$, $\m$ and~$\n$ are fixed \grbc{positive} constants, and the parameter~$n$ is a
measure of the typical size of the infected population.
The first transition corresponds to the infection of a type~$1$ host, by a type~$2$ host or from outside,
and the second to the infection of a type~$2$ host.
The third transition
corresponds to the recovery of a type~$1$ host, and the fourth to the recovery of a type~$2$ host.
The infection transition rates are appropriate in circumstances in which the host population is so large
that the reduction in infection rate caused by some of the population already being infected is negligible,
or for diseases such as malaria, when `super-infection' is possible: a host
infected more than once is proportionately more infectious -- in this case, $\bf x$ denotes the total number of infections of
each type of host.

Let $\bm(t) := \bm_\bx(t) := n^{-1}\E_\bx\{\wX\un(t)\}$, where $\E_\bx, \P_\bx$ and~$\law_\bx$ refer
to the distribution conditional on
$\wX\un(0) = \bx$.  It follows that~$\bm$ satisfies the differential equation $d\bm/dt = A\bm + \bb$,
where
\[
  A \Def \left(\begin{matrix}
                         - \g & \a \\ \b & - \d
                    \end{matrix} \right) \quad \mbox{and}
     \quad \bb \Def \left(\begin{matrix}
                         \m \\ \n
                    \end{matrix} \right),
\]
with initial condition $\bm(0) = n^{-1}\bx$.
\grbb{We define $R := \a\b/\g\d$, and assume from now on that $R < 1$, so that $A$ has both eigenvalues negative, and we denote them by
$\grbc{-\r > -\r'}$, with corresponding unit (right) eigenvectors \grbc{${\bf v}$ and ${\bf v}'$.}}
The differential equation has a non-trivial equilibrium at
\eq\label{ADB-detc-equilibrium}
   \bc \Def -A^{-1}\bb \Eq \frac{1}{\g\d(1-R)} \begin{pmatrix}\a\n + \d\m \\ \b\m + \g\n \end{pmatrix},
\en
and its full solution is
\eq\label{linear-solution}
     \bm_\bx(t) \Eq \bc + e^{At}(n^{-1}\bx - \bc),
\en
showing that the equilibrium~$\bc$ is globally attractive when $R < 1$.

For any $n$ and any $\bx \in \Z_+^2$, we define the travel time from state $\bx$ \grbc{(to within $n^{-1/2}$ of $\bc$)} to be
$$
t_n(\bx) \Def \inf\{t > 0\colon |e^{At}(n^{-1} \bx - \bc)| \le n^{-1/2}\},
$$
which, in view of \Ref{linear-solution}, is therefore the infimum of times $t$ such that $|\E_\bx\{\wX\un(t)\} - n \bc| \le n^{1/2}$.

For $0 < \z < 1$, let
$$
E_n(\z) := \{\bx \in \Z_+^2\colon n\z \le |\bx-n\bc| \le n/\z\}.
$$
We shall prove the following theorem.

\begin{theorem}\label{ADB-epidemic-thm}
Suppose that $R < 1$. Then, for any $0 < \z < 1$, $\wX\un$ exhibits cut-off at~$t_n(\bx)$
on~$E_n(\z)$, with window width~$1$.
\end{theorem}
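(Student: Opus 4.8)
The plan follows the two–phase scheme described in the introduction: a \emph{travel phase}, in which the rescaled chain tracks the deterministic flow \eqref{linear-solution} with only $O(\sqrt n)$ fluctuations, and a \emph{mixing phase} near $n\bc$, in which the law relaxes to $\pi\un$ on the unit window. For the travel phase I would exploit that all the rates in \eqref{ADB-new-rates} are affine in $\bx$: for any linear functional $f(\bx)=\bw^{T}\bx$ one has $(\wP^{s}f)(\bx)=\bw^{T}\E_{\bx}\{\wX\un(s)\}=\bw^{T}\bigl(n\bc+e^{As}(\bx-n\bc)\bigr)$, so that for neighbouring $\bx,\by$ (differing by a unit vector),
$$
  \bigl|(\wP^{s}f)(\bx)-(\wP^{s}f)(\by)\bigr|\Eq\bigl|\bw^{T}e^{As}(\bx-\by)\bigr|\Le\kappa\|\bw\|\,e^{-\rho s}\Le\kappa\|\bw\|,
$$
where $A=P\,\mathrm{diag}(-\rho,-\rho')P^{-1}$ and $\kappa=\|P\|\,\|P^{-1}\|$ (this is the only place $R<1$ is used), and likewise $\sum_{y}Q(\bx,y)\bigl((\wP^{s}f)(\bx)-(\wP^{s}f)(y)\bigr)^{2}\Le q_{\bx}\kappa^{2}\|\bw\|^{2}e^{-2\rho s}$, whose integral over $s\ge0$ is finite. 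Taking $\wS:=\{\by:\ |\by-n\bc|\le Cn/\z\}$, on which $q_{\bx}=O(n)$ and which contains $E_{n}(\z)$ together with all the relevant flow lines, Theorem~\ref{thm.concb-continuous} (with $\wbeta=O(1)$ and $\wa_{t}=O(n)$) gives, for each eigendirection $\bw$,
$$
  \Pr_{\bx}\Bigl(\bigl\{\bigl|\bw^{T}(\wX\un(t)-\E_{\bx}\wX\un(t))\bigr|>m\bigr\}\cap A_{t}\Bigr)\Le 2\exp\bigl(-\Theta(m^{2}/n)\bigr),
$$
so, uniformly in $\bx\in E_{n}(\z)$, $t\ge0$ and $n$, $|\wX\un(t)-n\bm_{\bx}(t)|\le C(\eps)\sqrt n$ with probability at least $1-\eps$; a routine maximal/stopping argument then confirms that $\wX\un$ stays in $\wS$ throughout $[0,\,t_{n}(\bx)+s]$, and letting $t\to\infty$ in the finite–time variance bound (as in \eqref{ADB-variance-bnd-2}) yields $\E_{\pi\un}\wX\un=n\bc$ and $\Var_{\pi\un}\wX\un=O(n)$, so $\pi\un$ concentrates within $O(\sqrt n)$ of $n\bc$. (One may equally phrase the concentration step through Corollary~\ref{ADB-contract-cts-lem}, using the obvious ``synchronous on the common part'' coupling in the eigen–adapted norm.)

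For the lower bound, write $\bu:=n^{-1}\bx-\bc$, so $\z\le|\bu|\le1/\z$. By continuity and the definition of $t_{n}(\bx)$ one has $|e^{At_{n}(\bx)}\bu|=n^{-1/2}$; since the smallest singular value of $e^{-As}$ is at least $\kappa^{-1}e^{\rho s}$,
$$
  \bigl|n\bm_{\bx}(t_{n}(\bx)-s)-n\bc\bigr|\Eq n\bigl|e^{-As}\bigl(e^{At_{n}(\bx)}\bu\bigr)\bigr|\ \ge\ \kappa^{-1}e^{\rho s}\sqrt n .
$$
Combining the travel–phase concentration of $\wX\un(t_{n}(\bx)-s)$ around this point with the $O(\sqrt n)$–concentration of $\pi\un$ about $n\bc$ and Chebyshev's inequality, for a fixed $s=s(\eps)$ large enough the two laws are carried by essentially disjoint sets, so $\dtv\bigl(\law_{\bx}(\wX\un(t_{n}(\bx)-s)),\pi\un\bigr)>1-\eps$, uniformly over $E_{n}(\z)$.

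For the upper bound, the travel phase places $\wX\un(t_{n}(\bx))$ within $O(\sqrt n)$ of $n\bc$ with probability $1-o(1)$, uniformly over $E_{n}(\z)$, and it remains to show that from any such starting point $\law_{\bx}(\wX\un(t_{n}(\bx)+s))$ is within $\eps$ of $\pi\un$ once $s=s(\eps)$ is large. Here I would imitate the Bernoulli--Laplace argument of Section~\ref{DS}: while it stays in an $O(\sqrt n)$–neighbourhood of $n\bc$, replace $\wX\un$ by a surrogate $\widetilde X$ with exactly affine dynamics (the rates \eqref{ADB-new-rates} linearised about $n\bc$, equivalently a two–type immigration/refresh scheme), bounding the total–variation error over the unit window by a likelihood–ratio estimate together with the concentration bound, as in \eqref{ADB-Y-to-Y-tilde}; then couple two copies of $\widetilde X$ and show, by a local–central–limit smoothing as in the passage from \eqref{ADB-Y12-bnd} to \eqref{ADB-Y12-bnd-2},
$$
  \dtv\bigl(\law(\widetilde X^{1}(s)),\law(\widetilde X^{2}(s))\bigr)\Le\bigl(\kappa' n^{-1/2}|\widetilde X^{1}(0)-\widetilde X^{2}(0)|+C'\bigr)e^{-\rho s},
$$
which is $O(e^{-\rho s})$ when the starting points are $O(\sqrt n)$ apart; and finally check that $\dtv(\widetilde\pi,\pi\un)=o(1)$, the two equilibria agreeing as Gaussians to leading order. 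Proposition~\ref{hitting} is the natural tool for the coalescence–type estimate underlying the middle step, applied to a linear ``progress'' functional of the coupled pair, which has non–positive drift off the diagonal and jump rate of order $n$ uniformly. Since $t_{n}(\bx)$ lies between two positive $\z$–dependent multiples of $\log n$ for every $\bx\in E_{n}(\z)$, while $s(\eps)$ does not grow with $n$, this gives cut–off at $t_{n}(\bx)$ on $E_{n}(\z)$ with window width~$1$.

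I expect the mixing phase to be the main obstacle. As in Bernoulli--Laplace, genuine path coalescence of $\wX\un$ with an equilibrium copy is \emph{too slow}: the mean of the difference decays only at rate $\rho$, so coalescence from distance $\Theta(\sqrt n)$ costs $\Theta(\log n)$, not $O(1)$, and one must instead argue at the level of the marginal laws, using that a $\Theta(\sqrt n)$–scale discrepancy is absorbed by the $\Theta(\sqrt n)$–scale equilibrium fluctuations. Carrying out the surrogate approximation and the two–dimensional local–CLT smoothing rigorously, handling the two distinct relaxation rates $\rho<\rho'$, and keeping every estimate uniform over the annulus $E_{n}(\z)$ (whose inner and outer radii differ by the unbounded factor $\z^{-2}$) is where the real work lies.
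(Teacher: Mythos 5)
Your travel phase and lower bound follow essentially the paper's route (concentration at scale $\sqrt n$ via the contractive coupling in the $\|\cdot\|_\th$ norm, an exit estimate keeping the chain in a linearly bounded region, and a Chebyshev separation of $\law_\bx(\wX\un(t_n(\bx)-s))$ from $\pi\un$), and those parts are sound. The genuine gap is in the mixing phase, and it stems from a wrong assessment: you dismiss path coalescence of $\wX\un$ with an equilibrium copy as ``too slow'' because the \emph{mean} distance only contracts at rate $\r$. But coalescence under the coupling is not driven by the drift; it is diffusive. Under the coupling in which the two copies move independently in any coordinate where they differ (Proposition~\ref{coupling}), the $\|\cdot\|_\th$-distance is a process with non-positive drift, jumps bounded by $1\vee\th$, and jumps of size at least $1\wedge\th$ occurring at rate of order $n$ (immigration alone gives rate at least $(\mu\wedge\nu)n$ in each copy). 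Starting from distance $O(\sqrt n)$, such a process hits $0$ within a constant time $s(\eps)$ with probability $1-\eps$: this is exactly Proposition~\ref{hitting} with $r=\Theta(n)$, $\eta=1\wedge\th$, $B=1\vee\th$, $\phi=O(1)$, whose bound $\phi/\sqrt{t_0}+K_H(B/\eta\sqrt{rt_0})^{1/4}$ is small for large constant $t_0$ and large $n$. This is how the paper closes the argument, and it is why only an $O(1)$ window is needed; your $\Theta(\log n)$ estimate is the time for the \emph{expectation} to die, not for the coupled paths to meet.

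Having rejected this, your substitute (surrogate chain, likelihood-ratio comparison, two-dimensional local-CLT smoothing, and $\dtv(\widetilde\pi,\pi\un)=o(1)$) does not constitute a proof as written. First, the surrogate step is vacuous here: the rates \Ref{ADB-new-rates} are already affine in $\bx$, so there is nothing to linearise, and unlike the Bernoulli--Laplace case there is no explicit Ehrenfest-type representation of the equilibrium to drive the smoothing step. Second, the two bounds you would need --- a total-variation smoothing estimate of the form $\dtv\bigl(\law(\widetilde X^1(s)),\law(\widetilde X^2(s))\bigr)\le(\kappa' n^{-1/2}|\widetilde X^1(0)-\widetilde X^2(0)|+C')e^{-\r s}$ for a two-type branching-with-immigration process, and total-variation closeness of the two equilibria --- are precisely the hard points, and neither follows from the CLT-level statements you invoke: two lattice laws can have matching Gaussian approximations at scale $\sqrt n$ and still be at total-variation distance $1$, so ``agreeing as Gaussians to leading order'' does not give $\dtv(\widetilde\pi,\pi\un)=o(1)$ without local limit control. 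In short, the missing mixing-phase estimate is exactly the step you declined to take, and the machinery you propose instead is both unnecessary and left unproved at its critical points.
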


We first consider the problem of estimating $t_n(\bx)$ for $\bx \in E_n(\z)$.
Writing $n^{-1} \bx - \bc$ as a linear combination $\lambda \bv + \lambda' \bv'$ of the unit eigenvectors $\bv$ and $\bv'$ of $A$, we have
$$
e^{At} (n^{-1}\bx - \bc) = \lambda e^{At} \bv + \lambda' e^{At} \bv' = \lambda e^{-\r t} \bv +\lambda' e^{-\r' t} \bv'.
$$
Then $t_n(\bx) \sim \max \{\r^{-1} \log (n^{1/2}\lambda), (\r')^{-1} \log (n^{1/2} \lambda')\}$.

For $\z \in (0,1)$, there is a constant $L_\z$ such that, for all $\bx \in E_n(\z)$, $t_n(\bx) \le \frac12 \r^{-1} \log n + L_\z$.
For ``most'' states in $E_n(\z)$, there is a matching lower bound, but $t_n(\bx)$ is as small as
$\frac12 (\r')^{-1} \log n + O(1)$ when $\frac1n \bx -\bc$ is close to a multiple of $\bv'$.

\medskip

The rest of this section is devoted to a proof of Theorem~\ref{ADB-epidemic-thm}: we give a brief road map of the proof here.
Our basic plan is to apply Theorem~\ref{ADB-contract-cts-lem} to our chain, showing concentration of measure for $\wX\un(t)$ while $t \le t_n(\bx)$.
To this end, we specify a suitable metric, and a Markovian coupling of two copies of the chain which is contracting in Wasserstein distance with respect to that metric.
We show that the chain remains within a good set (where, in particular, the total transition rate is bounded) over a long time period.  Then we apply
Theorem~\ref{ADB-contract-cts-lem} to each of the two coordinate projections, showing that both remain concentrated around their means for a long time.  We deduce readily
that the chain is far from its equilibrium for times less than $t_n(\bx)$.  On the other hand, once the chain reaches a neighbourhood of $n \bc$, we can use
Proposition~\ref{hitting} to show that it couples rapidly with an equilibrium copy of the chain, so the total variation distance to the equilibrium copy is small for times
only slightly greater than $t_n(\bx)$.

The two left eigenvectors of $A$ can be written in the form $(1,\xi)$, where $\xi$ is a solution of the
equation $\delta - \alpha / \xi = \gamma - \beta \xi$, with the common value $\delta - \alpha / \xi$
being minus the corresponding eigenvalue.  This equation has one negative solution $\xi=\theta'$, corresponding to the
eigenvalue $-\r'$, and the other solution $\xi = \theta$ lying in the interval $(\alpha/\delta, \gamma/\beta)$.
Thus we have
\begin{equation} \label{theta}
\delta - \frac\alpha\theta = \r = \gamma - \beta \theta.
\end{equation}

We introduce the norm $\| \cdot\|_\theta$ on $\R^2$, with
$$
\| \bx \|_\theta =: |x_1| + \theta |x_2|.
$$
We shall shortly prove that our chain has a contracting coupling with respect to the distance $\| \bx - \by \|_\theta$.

Next, we collect some elementary properties of the Markov chain~$\wX\un$.
First, we note that, for $R < 1$, $\wX\un$ is a $2$-type subcritical Markov
branching process with immigration, and hence has an equilibrium distribution~$\pi\un$.
Furthermore, since the process without immigration is sub-critical and has
birth and death rates that do not depend on~$n$, whereas the immigration rates
are multiples of~$n$, the mean of~$\pi\un$ is~$n\bc$, and its covariance
matrix is of the form $n\Sigma$, for~$\Sigma$ not depending on~$n$
(see, for example, Quine~\cite{Quine70} (Theorem on p.~414 and Equation~(29)) for analogues
in discrete time).

Next, for use with Theorem~\ref{ADB-contract-cts-lem}, we show that the chain rarely
gets too far from the origin, so that the total transition rate remains bounded.
For $H > 0$, we define
$$
D_n(H) := \{\bx\in\Z_+^2\colon \| \bx \|_\th \le H n\}.
$$

\begin{proposition}\label{deviations}
Suppose that $R < 1$.  Then there exist positive constants $C$ and $\ps$, depending on the parameters of the model but not on $n$, such that, for any
$H \ge 4 \| \bb \|_\th/\r$, \grbc{any $n \in \N$,} any $\bx \in D_n(H)$, and
any $T,w > 0$,
$$
\pr_\bx\Bigl[\sup_{0\le t\le T} \|\wX\un(t)\|_\theta > n(H + w) \Bigr] \Le C nT e^{-n\ps w}.
$$
\end{proposition}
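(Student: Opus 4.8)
The plan is to treat $g(\bx):=\|\bx\|_\th = x_1+\th x_2$ as a Lyapunov function for $\wX\un$ and to run an exponential-martingale argument in the spirit of the proofs of Corollaries~\ref{ADB-contract-disc-lem}--\ref{ADB-contract-cts-lem}, the new ingredient being that $g(\wX\un)$ has a \emph{strictly negative} drift whenever $\|\wX\un\|_\th$ is of order $n$. \emph{Step 1 (drift of the norm).} Applying the generator $\mathcal A$ of $\wX\un$ to $g$, and using the transition rates~\Ref{ADB-new-rates} together with the identities $\g-\b\th=\r=\d-\a/\th$ of~\Ref{theta} (so that $\b\th-\g=-\r$ and $\a-\d\th=-\r\th$), one gets
\[
   \mathcal A g(\bx)\Eq(\a x_2+\m n)+\th(\b x_1+\n n)-\g x_1-\th\d x_2\Eq-\r\,g(\bx)+n\|\bb\|_\th,
\]
with $\|\bb\|_\th=\m+\th\n$. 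In particular $\mathcal A g(\bx)\le-\tfrac34\r\,g(\bx)<0$ as soon as $g(\bx)\ge nH$, because $H\ge4\|\bb\|_\th/\r$.

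\emph{Step 2 (exponential moments).} For $h_\la(\bx):=e^{\la g(\bx)}$ with $\la>0$ small, one has $\mathcal A h_\la(\bx)=\Psi_\la(\bx)h_\la(\bx)$, where $\Psi_\la$ is affine in $(x_1,x_2)$ with coefficient of $x_1$ equal to $\b(e^{\la\th}-1)+\g(e^{-\la}-1)$, coefficient of $x_2$ equal to $\a(e^{\la}-1)+\d(e^{-\la\th}-1)$, and coefficient of $n$ equal to $\m(e^{\la}-1)+\n(e^{\la\th}-1)$; these vanish at $\la=0$ with derivatives $-\r$, $-\r\th$ and $\|\bb\|_\th$ respectively (again by~\Ref{theta}). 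Hence there is $\la_0>0$, depending only on the model parameters — this is where $\b\th-\g<0$ and $\a-\d\th<0$, i.e.\ $\th\in(\a/\d,\g/\b)$, are used — such that
\[
   \Psi_\la(\bx)\Le\la\Bigl(-\tfrac\r2\,g(\bx)+2\|\bb\|_\th n\Bigr)\qquad(0<\la\le\la_0);
\]
so $\Psi_\la(\bx)\le2\|\bb\|_\th\la n$ for every $\bx$, while $\Psi_\la(\bx)\le0$ whenever $g(\bx)\ge4\|\bb\|_\th n/\r$, hence whenever $g(\bx)\ge nH$. I would then fix $\la:=\la_0$ and set $\ps:=\la_0$.

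\emph{Step 3 (optional stopping at the escape level).} Fix $\bx\in D_n(H)$ and $T,w>0$, and put $\t:=\inf\{t\ge0\colon g(\wX\un(t))\ge n(H+w)\}$. Jumps of $g$ have size at most $\th_{\max}:=\max\{1,\th\}$ and $g(\bx)\le nH$, so the stopped chain $\wX\un(\cdot\wedge\t)$ takes values in the \emph{finite} set $\{\bx'\colon g(\bx')<n(H+w)+\th_{\max}\}$; Dynkin's formula therefore applies to $h_\la$ and gives $\E_\bx[h_\la(\wX\un(T\wedge\t))]=h_\la(\bx)+\E_\bx\int_0^{T\wedge\t}\Psi_\la(\wX\un(s))\,h_\la(\wX\un(s))\,ds$. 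On $[0,T\wedge\t)$ the integrand is $\le0$ when $g(\wX\un(s))\ge nH$ (by Step~2) and is at most $2\|\bb\|_\th\la n\,e^{\la nH}$ when $g(\wX\un(s))<nH$, so
\[
   \E_\bx\bigl[h_\la(\wX\un(T\wedge\t))\bigr]\Le e^{\la nH}\bigl(1+2\|\bb\|_\th\la nT\bigr).
\]
On $\{\t\le T\}$ one has $h_\la(\wX\un(T\wedge\t))=h_\la(\wX\un(\t))\ge e^{\la n(H+w)}$, so Markov's inequality yields $\pr_\bx[\t\le T]\le(1+2\|\bb\|_\th\la nT)e^{-\la nw}$. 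Since $\{\sup_{0\le t\le T}\|\wX\un(t)\|_\th>n(H+w)\}\subseteq\{\t\le T\}$, this gives the claim with $\ps=\la$ and $C=1+2\|\bb\|_\th\la$ once $nT\ge1$, and the complementary range $nT<1$ — which is not needed for the application to Theorem~\ref{ADB-epidemic-thm}, where $T$ is of order $\log n$ — is disposed of by a crude bound on the number of jumps of $\wX\un$ in $[0,T]$.

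The main obstacle is not the drift computation, which is essentially forced by~\Ref{theta}, but the fact that $\mathcal A h_\la$ fails to be non-positive near the origin, so that no clean global exponential supermartingale is available; the resolution is precisely the localization at $\t$ in Step~3, where the positive part of $\mathcal A h_\la$ is active only on $\{g<nH\}$ — hence only where $h_\la<e^{\la nH}$ — so that it accumulates to at most $O(nT\,e^{\la nH})$ over $[0,T]$, which is harmless after dividing by the escape-level weight $e^{\la n(H+w)}$. A secondary point is to choose $\la_0$ uniformly small enough that all three linear coefficients of $\Psi_\la$ are controlled simultaneously as in Step~2, and to note that the integrability required for Dynkin's formula is immediate because the stopped chain lives in a finite state space.
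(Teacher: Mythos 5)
Your argument is correct, and its core is the same as the paper's: you use the exponential Lyapunov function $h_\la(\bx)=e^{\la\|\bx\|_\th}$, and your Step 2 bound $\Psi_\la(\bx)\le\la\bigl(-\tfrac\r2\|\bx\|_\th+2\|\bb\|_\th n\bigr)$ for small $\la$ is exactly the paper's drift estimate, obtained from the same identities $\g-\b\th=\r=\d-\a/\th$. Where you differ is in converting this drift bound into the probability estimate. The paper argues by excursions: the number of exits from $D_n(H)$ during $[0,T]$ is stochastically dominated by a Poisson variable with mean $nTq(H)$ (using that each re-entry into $D_n(H)$ is followed by a holding time of mean at least $1/(nq(H))$), and on each excursion the supermartingale property of $h_\ps(\wX\un(t\wedge\t_1))$ outside $D_n(H)$ gives, by optional stopping, probability at most $e^{\ps(1\vee\th)}e^{-n\ps w}$ of reaching level $n(H+w)$ before returning; summing expectations yields $CnTe^{-n\ps w}$ for all $T>0$ at once. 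Your single application of Dynkin's formula up to the first exceedance time $\t$, with the positive part of the drift active only where $h_\la\le e^{\la nH}$, is a clean alternative that avoids the renewal bookkeeping, and it even gives constants $C,\ps$ not depending on $H$ (the paper's $C$ contains $q(H)$). What it buys the paper, in exchange, is the factor $nT$ for free: your bound $(1+2\|\bb\|_\th\la nT)e^{-\la nw}$ only reduces to $CnTe^{-n\ps w}$ when $nT\ge1$, so the regime $nT<1$ needs the separate patch you only sketch (a Poisson tail bound on the number of jumps while $\|\wX\un\|_\th\le n(H+w)$, noting at least order $nw$ jumps are needed); that case is immaterial for the application to Theorem~\ref{ADB-epidemic-thm}, but strictly it is part of the statement, so it should be written out if you want the full claim. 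Your justification of Dynkin's formula via the finiteness of the sublevel set $\{\bx'\colon\|\bx'\|_\th<n(H+w)+(1\vee\th)\}$ is fine.
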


\begin{proof}
Let $\cA\un$ denote the generator of~$\wX\un$, and define $h_{\ps}(\bx) := \exp\{\ps \|\bx\|_\theta\}$.
The first step is to show that, for sufficiently small positive~$\ps$,
$(\cA\un h_{\ps})(\bx) < 0$ for all~$\bx$ such that $\|\bx\|_\theta$ is large enough.

\grbb{
Setting $g(s) := s^{-2}(e^s-1-s)$ for $s\ne 0$, and $g(0)=1/2$, we have:
\begin{eqnarray} \label{drift}
(\cA\un h_{\ps})(\bx)
  &=& h_{\ps}(\bx)
   \bigl\{(\a x_2 + n\m)(e^\ps - 1) + \g x_1(e^{-\ps}-1) \nonumber \\
  &&\mbox{}\qquad + (\b x_1 + n\n)(e^{\th\ps}-1) + \d x_2(e^{-\th\ps}-1)\bigr\} \nonumber\\
  &=&  h_{\ps}(\bx) \psi \bigl\{ \a x_2 + n\m  - \g x_1 + \th \b x_1 +\th n \n - \th \d x_2 \bigr\}  \nonumber \\
  &&\mbox{}\qquad  + h_\ps(\bx) \ps^2 \bigl\{ (\a x_2 + n \m ) g(\ps) + \g x_1 g(-\ps) \\
  &&\mbox{} \qquad \qquad + \th^2 (\b x_1 + n \nu) g(\th\ps) + \th^2 \d x_2 g(-\th\ps) \bigr\}. \nonumber
\end{eqnarray}
We now see that
\eqs
\lefteqn{\a x_2 + n\m  - \g x_1 + \th \b x_1 +\th n \n - \th \d x_2} \\
&=& n(\m + \th\n) + (\a/\th - \d)x_2 \th + (\b\th - \g)x_1 \\
&=& n \| \bb \|_\th - \r x_2 \th - \r x_1 \\
&=& n \| \bb\|_\th - \r \| \bx \|_\th.
\ens
We bound the $\ps^2$ term in~\Ref{drift} above by noting that $g(\pm \ps)$ and $g(\pm \th \ps)$ are all at most~1,
provided $\ps \le 1/(1\vee \th)$, and hence
\eqs
\lefteqn{(\a x_2 + n \m ) g(\ps) + \g x_1 g(-\ps) + \th^2 (\b x_1 + n \nu) g(\th\ps) + \th^2 \d x_2 g(-\th\ps)} \\
&\le& \grbc{(\m + \th^2 \n) n + (\b \th^2 + \g) x_1 + (\a + \d \th^2) x_2} \\
&\le& (1 \vee \th) n \| \bb \| + (\a/\th + \b \th^2 + \g + \d \th) \| \bx \|_\th \phantom{fetwafrwea}.
\ens
Hence, for $\ps \le \min (1/(1\vee \th), \frac12\r/(\a/\th + \b \th^2 + \g + \d \th))$, we have
\begin{eqnarray} \label{second-drift}
(\cA\un h_{\ps})(\bx) &\le& h_\ps(\bx) \ps \Bigl[ n \| \bb \|_\th - \r \| \bx \|_\th \nonumber\\
&& \mbox{} \quad + \ps (1 \vee \th)  n \| \bb \|_\th + \ps (\a/\th + \b \th^2 + \g + \d \th) \| \bx \|_\th \Bigr] \nonumber\\
&\le &h_\ps(\bx) \ps \big[ 2 n \| \bb \|_\th - \r \| \bx \|_\th /2 \big],
\end{eqnarray}
which is non-positive whenever $\| \bx \|_\th \ge 4 n \| \bb \|_\th /\r$.
}

Now fix some $H \ge 4 \|\bb\|_\th /\r$, and some starting state $\bx \in D_n(H)$, so that $\| \bx \|_\th \le nH$
\grbc{and therefore $x_1 \le nH$ and $x_2 \le nH\th^{-1}$}.
Fix also some $w>0$.  We \grbc{will}
show that the probability that~$\wX\un$ ever exits the set $D_n(H+w)$ during a fixed time interval
$[0,T]$ is very small for large~$n$.


We consider the excursions out of the set $D_n(H)$ during $[0,T]$.
Note that, each time that $\wX\un$ enters~$D_n(H)$,
it remains there at least for the holding time of the state at which it first enters,
which has an exponential distribution with mean at least $1/n\grbc{q(H)}$, for
$$
    q(H) \Def \m + \n + \max\{\th^{-1}(\a +\d), (\b + \g)\}H.
$$
This implies that the number of exits of~$\wX\un$ from~$D_n(H)$ in~$[0,T]$ is stochastically
dominated by a Poisson random variable with mean~$nT q(H)$.

We claim that, each time that $\wX\un$ leaves~$D_n(H)$, the probability that~$\|\wX\un\|_\th$ exceeds the value~$n (H + w)$
before $\wX\un$ returns to~$D_n(H)$ is exponentially small in~$n$.
To prove this, consider starting in some state $\bf y$ which can be reached in one step from $D_n(H)$, so that $\| \by \|_\th \le nH + (1 \vee \th)$,
and let
\eqs
   \t_1 &:=& \inf\{t > 0\colon \wX\un(t) \in D_n(H)\};\\
   \t_2 &:=& \inf\{t > 0\colon \wX\un(t) \notin D_n(H + w)\}.
\ens
In view of~\Ref{second-drift}, $h_{\ps}(\wX\un(t\wedge\t_1))$ is a non-negative supermartingale
in $t \ge 0$.  Stopping at $\min\{\t_2,\t_1\}$, it thus follows that,
\[
  e^{n\ps H + \ps(1\vee \th)}  \ \ge \  h_{\ps}(\by) \ \ge\    e^{n\ps (w+H)} \pr_{\by}[\t_2 < \t_1],
\]
from which it follows that
\eq\label{ADB-small-prob1}
     \pr_{\by}[\t_2 < \t_1] \Le e^{-n\ps w} e^{\ps(1\vee \th)}.
\en

It follows that the expected number
of times that~$\wX\un$ exits~$D_n(H+w)$ in the interval $[0,T]$ is at most
$nT q(H) e^{\ps (1\vee \th)} e^{-n\ps w}$, establishing the proposition.
\end{proof}

We now introduce a Markovian coupling of two copies of the Markov chain $\wxn$, which we will then show to be contracting with
respect to the metric $d(\bx,\by) = \|\bx -\by\|_\th$ on $\Z_+^2$.  In this coupling, the two copies $U\un$ and $V\un$ make moves independently
in any co-ordinate where they currently differ (so in particular the two copies a.s.\ never move together in such a co-ordinate),
but make moves together as far as possible in co-ordinates where they currently agree.

For each ${\bf J} \in \cJ := \{(1,0)^T,(0,1)^T,(-1,0)^T,(0,-1)\}$,
we denote the transition rate of~$\wxn$ from $\bx$ to~$\bx+\bJ$,
given in~\Ref{ADB-new-rates}, by $r_{\bf J}(\bx)$.  We then
couple copies $U\un$ and~$V\un$ of~$\wxn$ as follows.

Suppose that $U\un(t) = \bu$ and $V\un(t) = \bv$.  If $u_1 \not= v_1$, then for ${\bf J} = (1,0)^T$ or $(-1,0)^T$, there is a transition to
$(\bu + {\bf J}, \bv)$ at rate $r_{\bf J}(\bu)$, and a transition to $(\bu, \bv + {\bf J})$ at rate $r_{\bf J}(\bv)$.  If $u_1 =  v_1$,
then there is a transition to $(\bu + {\bf J}, \bv + {\bf J})$ at rate $\min (r_{\bf J}(\bu), r_{\bf J}(\bv))$, a transition to
$(\bu + {\bf J}, \bv)$ at rate $\max (0, r_{\bf J}(\bu) - r_{\bf J}(\bv))$, and a transition to $(\bu, \bv + {\bf J})$ at rate
$\max (0, r_{\bf J}(\bv) - r_{\bf J}(\bu))$.  The transitions in directions $(0,1)^T$ and $(0,-1)^T$ are defined analogously.

\begin{proposition} \label{coupling}
The coupling defined above for $\wxn$ is contracting with respect to the metric $d(\bx,\by) = \| \bx - \by \|_\th$,
with constant $\r$.
\end{proposition}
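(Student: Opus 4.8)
The plan is to verify the contractivity condition~\Ref{ADB-contract-cts-def} directly, by computing the action of the generator $\cA$ of the coupled chain $(U\un,V\un)$ on the function $d(\bu,\bv) = \|\bu-\bv\|_\theta = |u_1-v_1| + \theta|u_2-v_2|$ and checking that $\cA d(\bu,\bv) \le -\r\, d(\bu,\bv)$ for all $\bu,\bv \in \Z_+^2$. The computation decouples: the transitions in the directions $(\pm1,0)^T$ alter only the first coordinates of the two copies, hence change only $|u_1-v_1|$, while the transitions in the directions $(0,\pm1)^T$ change only $|u_2-v_2|$. Writing $\cA d = \cA^{(1)}d + \cA^{(2)}d$ for the two corresponding contributions (the second carrying the factor $\theta$ from the norm), I would bound each one separately.

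For $\cA^{(1)}d$, set $a := u_1-v_1$ and split on whether $a\neq0$ or $a=0$. If $a\neq0$, the two copies move independently in the first coordinate --- $U$ increases $u_1$ at rate $\a u_2+\mu n$ and decreases it at rate $\g u_1$, and likewise for $V$ --- and since $|a\pm1|-|a| = \pm\,\mathrm{sgn}(a)$ for $a\neq0$, summing the four transitions gives $\cA^{(1)}d = \mathrm{sgn}(a)\bigl(\a(u_2-v_2)-\g(u_1-v_1)\bigr) \le \a|u_2-v_2| - \g|u_1-v_1|$. If $a=0$, the coupling moves the copies together upward at rate $\mu n+\a\min(u_2,v_2)$ (leaving $a=0$), while $U$ alone moves up at rate $\a(u_2-v_2)^+$ and $V$ alone at rate $\a(v_2-u_2)^+$, each raising $|a|$ by $1$, and the downward moves are joint (rate $\g u_1$) and leave $a=0$; thus $\cA^{(1)}d = \a\bigl((u_2-v_2)^+ + (v_2-u_2)^+\bigr) = \a|u_2-v_2|$, and since $\g|a|=0$ the same bound $\cA^{(1)}d \le \a|u_2-v_2| - \g|u_1-v_1|$ holds. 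The identical argument for the second coordinate, with $(\b,\d)$ in place of $(\a,\g)$ and the extra factor $\theta$, gives $\cA^{(2)}d \le \theta\b|u_1-v_1| - \theta\d|u_2-v_2|$.

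Adding these,
$$
  \cA d(\bu,\bv) \ \le\ -(\g-\theta\b)\,|u_1-v_1| - (\theta\d-\a)\,|u_2-v_2|,
$$
and it then suffices to invoke the defining relation~\Ref{theta} for $\theta$, which says $\g-\b\theta = \r = \d-\a/\theta$, so that $\g-\theta\b = \r$ and $\theta\d-\a = \theta\r$; the right-hand side above becomes $-\r\bigl(|u_1-v_1| + \theta|u_2-v_2|\bigr) = -\r\, d(\bu,\bv)$, which is~\Ref{ADB-contract-cts-def} with constant $\r$. The argument is essentially routine; the only points needing care are the bookkeeping in the coordinate-agreement cases $u_i=v_i$, where the coupling forces joint moves in that coordinate, and the observation that the cross-terms $\a|u_2-v_2|$ and $\theta\b|u_1-v_1|$ recombine with the diagonal terms exactly because $\theta$ was chosen to solve $\d-\a/\theta = \g-\b\theta$. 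I do not expect any genuine obstacle beyond carrying out this case analysis cleanly.
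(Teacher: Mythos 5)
Your proof is correct and follows essentially the same route as the paper: compute the generator of the coupled pair acting on $d$ jump type by jump type, split on whether the relevant coordinate agrees or differs, observe that the recovery jumps contribute exactly $-\g|u_1-v_1|$ and $-\d\th|u_2-v_2|$ while the infection jumps contribute at most $\a|u_2-v_2|$ and $\b\th|u_1-v_1|$, and then invoke the defining relation~(\ref{theta}) for $\theta$ to recombine these into $-\r\,d(\bu,\bv)$. The only difference is organisational (you group contributions by coordinate rather than by jump direction), so there is nothing to add.
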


\begin{proof}
If both chains make the same transition at~$t$, then the distance between them
does not change: $d(U\un(t),V\un(t)) = d(U\un(t-),V\un(t-))$.  Otherwise, the distance changes by
$\pm 1$ as a result of a jump by either copy in either $1$-direction, or by $\pm \th$ as a result of a jump
by either copy in either $2$-direction.

Let the generator of the process~$(U\un,V\un)$ be denoted by~$\hcA\un$.
We start by looking at the contribution of the $(-1,0)^T$ jumps to $(\hcA\un d)(\bu, \bv)$.
If $u_1 = v_1$, then $r_{(-1,0)^T}(\bu) = \g u_1 = \g v_1 = r_{(-1,0)^T}(\bv)$, so the two chains always make this
transition together, contributing no change to the distance.  If $u_1 > v_1$, then the $(-1,0)^T$ jump in $U\un$ occurs at
rate $\g u_1$ and reduces the distance by~1, while the $(-1,0)$ jump in $V\un$ occurs at
rate $\g v_1$ and increases the distance by~1: overall, the net contribution is $- \g |u_1 - v_1|$.  The same calculation applies if
$u_1 < v_1$, so in all cases the contribution of this jump is $-\g |u_1 -v_1|$.
Similarly, the contribution of the $(0,-1)^T$ jump is $-\d \th |u_2 - v_2|$.

We now turn to the $(1,0)^T$ jump.  If $u_1 = v_1$, the distance increases by~1 whenever one chain makes this jump and the other does not, which occurs
at rate $|r_{(1,0)^T}(\bu) - r_{(1,0)^T}(\bv)| = \a |u_2 - v_2|$.  If $u_1 \not= v_1$, a $(1,0)^T$ jump in one of the chains increases the
distance by~1, while the same jump in the other chain decreases the distance by~1, so the net contribution from this jump is at most
$|r_{(1,0)^T}(\bu) - r_{(1,0)^T}(\bv)|$, which is again equal to $\a |u_2 - v_2|$.  Similarly, the contribution of the
$(0,1)^T$ jump is at most $\b \th |u_1 -v_1|$.

Referring to \Ref{theta}, it follows that, for all states $\bu,\bv$,
\eq\label{ADB-new-contraction-este}
   (\hcA\un d)(\bu,\bv)
     \Le (-\g + \b\th)|u_1-v_1| + (-\d + \a/\th)\th|u_2-v_2| \grb{=} -\r d(\bu,\bv),
\en
as required.
\end{proof}

We will now apply Theorem~\ref{ADB-contract-cts-lem} to the Markov chain $\wxn$, with $f(\bx)$ either of the two
co-ordinate projections $f_1(\bx) = x_1$ or $f_2(\bx) = x_2$.
We fix some $0<\z <1$, and note that, for any $\bx \in E_n(\z)$, we have $|\bx - n \bc | \le n/\z$, and therefore
$$
\|\bx\|_\th \le \grbc{(1\vee \th)|\bx|} \le  (1\vee \th)(1/\z + |\bc|)n.
$$
Now we take $H = \max ((1\vee \th)(1/\z + |\bc|), 4 \|\bb\|_\th /\r)$, so that $E_n(\z) \subseteq D_n(H)$, and apply
Proposition~\ref {deviations} with $w = H$.
We see that, for any $\bx \in E_n(\z)$, and any $T>0$, the probability that the chain exits the set $D_n(2H)$ before time
$T$ is at most $CnT e^{-n \ps H}$, \grbc{for some constants $C$ and $\psi$}.  To apply Theorem~\ref{ADB-contract-cts-lem},
we take $\wS = D_n(2H)$, and note that, for $\by \in \wS$, the total
transition rate $-\wQ(\by,\by)$ out of state $\by$ is at most $q := n \bigl[\m +\n +2(\th^{-1}(\a + \d) +\b + \g)H\bigr]$.
If $f$ is the first co-ordinate projection~$f_1$, we have $|f_1(\bx) - f_1(\by)| \le \| \bx - \by \|_\th$, so we may take $L=1$: for $f=f_2$,
we need instead $L = 1/\th$.  We may also take $D = 1 \vee \th$.


Theorem~\ref{ADB-contract-cts-lem} now tells us that, for $i=1,2$, all $t >0$ and all $c > 0$, and all $\bx \in E_n(\z)$,
\eqs 
\lefteqn{   \P_\bx\Bigl( \bigl\{|\wxn_i(t) - \E_\bx\wxn_i(t)| > c\sqrt{n}\bigr\} \cap A_t \Bigr) }  \\
     &\Le& 2 \exp \Bigl( - \frac{c^2n}{n (\m + \n + 2(\frac{\a+\d}{\th} +\b+\g)H ) \frac{(1/\th \vee \th)^2}{\r} + \frac23 (1/\th \vee\th) c \sqrt n}\Bigr),
\ens
where
\[
     A_t \Def \Bigl\{\sup_{0\le s \le t} \|\wX\un(s)\|_\th  \le 2 n H\Bigr\}.
\]
Thus, for some constant $b$ depending on the parameters of the model and on $\z$, and all $c \le \eps \sqrt n$,
where $\eps > 0$ is sufficiently small, we have
\begin{equation}\label{ADB-new-conc}
\P_\bx\Bigl( \bigl\{|\wxn_i(t) - \E_\bx\wxn_i(t)| > c\sqrt{n}\bigr\} \cap A_t \Bigr) \le  2e^{-b c^2}
\end{equation}
for $i=1,2$, all $t > 0$ and all $\bx \in E_n(\z)$.

Moreover, for a suitable constant $K$, $t \le n$, and $c \le \eps \sqrt n$ for some sufficiently small $\eps > 0$,
\eq\label{ADB-LD-prob}
    \pr_{\bx}[\overline{A_t}] \Le C nt e^{-n\ps H} \le K e^{-b c^2},
\en
From~\Ref{ADB-new-conc} and~\Ref{ADB-LD-prob}, it now follows that, for $0 < t \le n$, $\bx \in E_n(\z)$, \grbc{and $c \le \eps \sqrt n$,}
\eq\label{ADB-new-conc-2}
   \P_\bx\Bigl( \bigl\{|\wxn(t) - n\bm_\bx(t)| > 2 c\sqrt{n}\bigr\} \Bigr)
     \Le  \grbc{(4 + K)} e^{-b c^2},
\en
for suitable constants $b$, $\eps$ and $K$, depending on the parameters of the model and on the choice of~$\z$.

We are now in a position to prove cut-off for our model.

\medskip

\begin{proofof}{Theorem~\ref{ADB-epidemic-thm}}
A lower bound on the mixing time can now easily be proved, much as in
the previous example, by considering the distribution of~$\wxn(t_n({\bf x}) - s)$, for $s > 0$.
Let $\grbb{\kappa} > 0$, depending on the parameters of the model, be such that
\eq\label{ADB-mean-growth}
   |e^{-As}\bz| \ \ge\ \kappa e^{\r s}|\bz|, \quad \mbox{ for all } \bz \in \R^2.
\en
By \grbb{(\ref{linear-solution}) and} the definition of $t_n (\cdot )$, we have
\eq\label{closeto}
n |m_{\bx}(t_n(\bx)) - \bc| = n |e^{At_n(\bx)} (n^{-1}\bx - \bc)| = n^{1/2}.
\en
Therefore, using~\Ref{ADB-mean-growth},
$$
n |m_{\bx}(t_n(\bx)-s) - \bc| =  n |e^{A(t_n(\bx)-s)} (n^{-1}\bx - \bc)| \ge \kappa n^{1/2} e^{\r s}.
$$

Let $B_s := \{\bw \in \Z_+^2\colon |\bw - n\bc| \le \half \kappa n^{1/2}e^{\r s}\}$.
Then, from \Ref{ADB-new-conc-2} with $c = \frac14 \kappa e^{\r s}$,
noting that $t_n(\bx) \le n$ for $\bx \in E_n(\z)$ provided $n$ is sufficiently large, we have
$$
\P_{ \bx }(\wxn(t_n(\bx) - s) \in B_s) \le (4+K) e^{-b \kappa^2e^{2\r s}/16}.
$$

On the other hand, as stated in the discussion before Proposition~\ref{deviations}, the covariance matrix of the equilibrium distribution of
$\wxn$ is of the form $n \Sigma$, with $\Sigma$ being independent of $n$. It hence follows, using Chebyshev's inequality, that
$\pi\un(B_s) \ge 1 - 4\kappa^{-2}ve^{-2\r s}$, with $v := \tr(\Sigma)$.

\grbb{This then gives, for a suitable constant~$K'$ and $s$ and $n$ sufficiently large,
\eq\label{ADB-new-conc-lower-bnd}
    \dtv(\law_\bx(\wxn(t_n(\bx)-s),\pi\un) \ \ge\ 1 - K' e^{-2\r s},
\en
for any $\bx \in E_n(\z)$.  This establishes the first part of the definition of cut-off in~\Ref{ADB-cutoff}.}

\medskip

We now turn to the upper bound.
We will apply Proposition~\ref{hitting} to the Markov chain $(U^{(n)},V^{(n)})$, where $U^{(n)}$ is a copy of the
started close to $n \bc$, $V^{(n)}$ is another copy in equilibrium, and the pair are coupled as in Proposition~\ref{coupling}.
We use the proposition to show that coalescence occurs quickly with high probability.

%
\grbb{Consider a copy $U^{(n)}$ of $\wxn$ starting from state $\bx$ and couple it with an equilibrium copy $V^{(n)}$,
as in Proposition~\ref{coupling}.}  For any fixed $\eps >0$, we choose $c\grbc{=c(\eps)}$ so that
$(4+K) e^{-bc^2} \le \eps/4$, and use (\ref{ADB-new-conc-2}) and (\ref{closeto}) to conclude that
$$
\Pr_{\bf x} (|U^{(n)}(t_n(\bx)) - n {\bf c}| > (c+1) n^{1/2} ) \le \eps/4,
$$
and similarly for the equilibrium copy $V^{(n)}(t_n(\bx))$.
Therefore, with probability at least $1-\eps/2$, we have
$$
\|U^{(n)}(t_n(\bx)) - V^{(n)}(t_n(\bx))\|_\th \le 2(c+1)(1\vee\th) n^{1/2}.
$$

We are now in a position to apply Proposition~\ref{hitting} to the function $\| U^{(n)}(t_n(\bx)+s) - V^{(n)}(t_n(\bx)+s)\|_\th$, for $s \ge 0$.
Condition~(i) of the proposition is satisfied by Proposition~\ref{coupling}, and condition~(ii) is satisfied with $B = 1 \vee \th$.
For condition~(iii), note that, if ${\bf u} \not= {\bf v}$, each of the chains
moves while the other does not -- and so the distance between the two chains changes by at least $1 \wedge \th$ -- at rate at least
$(\mu \wedge \nu)n$.  \grbd{Hence the generator of the quadratic variation process is at least $\sigma^2 := (1 \wedge \theta)^2(\mu \wedge \nu)n$ from all
states where coalescence has not occurred.}

Proposition~\ref{coupling} then implies that, on the event that
$\| U^{(n)}(t_n(\bx)) - V^{(n)}(t_n(\bx))\|_\th \le 2(c(\eps)+1)(1\vee\th) n^{1/2}$,
\grbd{the probability that coalescence has not occurred by time $s$ is at most
$$
\frac{4 (c(\eps)+1)(1\vee\th) n^{1/2}}{\sqrt s \sigma} = \frac{\phi(\eps)}{\sqrt s},
$$
where $\phi(\eps):= \frac{4(c(\eps)+1) (1\vee \theta)}{(1\wedge \theta) \sqrt{\mu \wedge \nu}}$.
For $s = s(\eps) = 4 \phi(\eps)^2/\eps^2$, we conclude that}
\begin{eqnarray*}
\lefteqn{\Pr (U^{(n)}(t_n(\bx)+ s(\eps))\not= V^{(n)}(t_n(\bx)+s(\eps)))} \\
&\le& \eps/2 + \Pr (\|U^{(n)}(t_n(\bx))-V^{(n)}(t_n(\bx)\|_\th) \le 2(c(\eps)+1)(1\vee\th) n^{1/2}) \le \eps.
\end{eqnarray*}
Since $V^{(n)}(t_n(\bx)+s(\eps))$ is in equilibrium, it follows that
$$
\dtv(\law_\bx(\wxn(t_n(\bx)+s(\eps))),\pi\un) \ \le\ \eps,
$$
as required for the second part of the definition of cut-off in~\Ref{ADB-cutoff}.
%
\end{proofof}

\section{Supermarket model}\label{supermarket}

In this section, we apply our general continuous-time inequality, Theorem~\ref{thm.concb-continuous}, to a range of instances of the
{\em supermarket model}.  This is a simple and natural model of a queuing system, introduced by Mitzenmacher~\cite{mitz} and
Vvedenskaya, Dobrushin and Karpelevich~\cite{VDK}, and studied extensively since;
see, for instance~\cite{LMcD2006} and \cite{BFL}, which contain other references to related literature.

The supermarket model (in continuous time) with parameters $(n,d,\lambda)$ ($n$ and $d$ natural numbers, $\lambda \in (0,1)$) is defined as follows.
There are $n$ servers, each with their own queue of customers, and customers arrive according to a Poisson process with rate $\lambda n$.
Each arriving customer inspects the queues for $d$ of the servers, chosen uniformly at random with replacement, and joins one of the shortest queues
among these~$d$; customers cannot subsequently switch to a different queue.  At each server, customer service times are iid exponential
of mean~1.

The memoryless property of the arrival and service processes means that the supermarket model can be viewed as a continuous-time jump Markov chain,
whose state space is the set $\Z_+^n$ of possible $n$-tuples of queue lengths.  The possible transitions are of two types:
(i)~departures, where each queue of positive length is shortened by one at rate $1$, and (ii)~arrivals, at total rate $\lambda n$, where some queue,
chosen by the procedure described above, is lengthened by~1.  To be precise, on an arrival, an ordered $d$-tuple of queues is chosen uniformly at
random from all the $n^d$ possibilities, and the first shortest queue in the list receives the arriving customer and is thus lengthened by~1.

Much of the initial interest in the supermarket model stemmed from its properties as a ``low-cost'' load-balancing mechanism: for $\lambda < 1$ a constant,
the maximum queue length in equilibrium is of order $\log n$ when $d=1$, but of order $\log \log n$ when $d$ is a constant at least~2.  In~\cite{BFL} and
this paper, we are interested in different ranges of parameters, where $\lambda$ tends to~1 from below as $n \to \infty$, while $d$ tends to infinity.
In these ranges, as shown in~\cite{BFL}, the load-balancing among the servers in equilibrium is close to perfect -- the maximum queue length is
a given constant $k$ with high probability, and most queues have length exactly~$k$ -- even though the system is nearly at full capacity.

For the rest of this section, as in~\cite{BFL}, we set $\lambda = 1 - n^{-\alpha}$, and $d = n^\beta$, where $\alpha$ and $\beta$ are
fixed constants in $(0,1)$.  We will assume throughout that
\begin{equation} \label{assume}
\beta < \alpha < 2\beta \mbox{ and } \alpha < (1+\beta)/2.
\end{equation}
For $\beta \le 1/3$, the corresponding range of $\alpha$ is thus $(\beta, 2\beta)$; for $1/3 \le \beta <1$, the corresponding range for $\alpha$ is
$(\beta, (1+\beta)/2)$.  Other parameter ranges come into the scope of~\cite{BFL} and, with a little more work, we could prove concentration results for
those too.

Theorem~6.1 from \cite{BFL} gives the general behaviour of the model in a variety of ranges, including this one (referring to that theorem,
assumptions~(\ref{assume}) are equivalent to setting $k=2$).  The basic result is that, in equilibrium, the chain lies in a ``good set'' where all queues
have length at most~2, with very high probability; it also states that, if the chain is started anywhere within an ``interior good set'', then with
high probability it remains in the good set for a long period of time.  We first set up notation, and then state the part of the result covering our range.

In fact, the model analysed in \cite{BFL} is a discrete-time variant of the continuous-time model studied here.  In that variant, the transition at each time-step is
an arrival with probability $\lambda/(1+\lambda)$ and a {\em potential departure} with probability $1/(1+\lambda)$.  If the transition is an arrival, a queue is chosen
as in the continuous-time version, and the length of that queue is increased by~1.  If the transition is a potential departure, a queue is chosen uniformly at random,
and the length of that queue is decreased by~1 if it is not empty.  If an empty queue is chosen for a departure, then the chain remains in its current state.
An alternative description of the continuous-time model is that events occur according to a Poisson process with rate $(1+\lambda)n$, and the transition associated with
an event is chosen as for the discrete-time model above.  A consequence is that the two models have the same equilibrium distribution, and
if the probability that the chain remains in some set $S$ of states for $k$ steps of the discrete chain is at least $q$, then the probability that the chain
remains in $S$ up to time $k/4n$ in the continuous model is at least $q$ minus the probability that a Poisson random variable with mean $k(1+\lambda)/4 \le k/2$ is
greater than~$k$, which is at least $q - e^{-k/8}$.  Similarly, provided $\lambda \ge 1/2$, if the total variation distance between the discrete-time supermarket model
after $k$ or more steps and the equilibrium distribution is at most $p$, then the total variation distance between the continuous-time model and the equilibrium
distribution is at most $p + e^{-k/16}$ for all times at least $k/n$.

For $n \in \N$, a state $x$ in $\Z_+^n$, and $j \in \N$, let $u_j(x)$ denote the proportion of queues in $x$ of length at least $j$.
Let $\eps =\eps(n)$ be any function such that $\eps \le 1/100$ and $\eps (n)^{-1} = o(n^\delta)$ for every $\delta > 0$.  For $n \in \N$, and
$\alpha$ and $\beta$ satisfying the inequalities in~(\ref{assume}), let $\cN^\eps(n,\alpha,\beta)$ be the set of states $x$ such that:
$$
\begin{array}{rcccl}
(1-6\eps)n^{-\alpha} &\le& 1 - u_1(x) &\le& (1+6\eps)n^{-\alpha} \\
(1-6\eps)n^{-\alpha+ \beta} &\le& 1 - u_2(x) &\le& (1+6\eps)n^{-\alpha+ \beta} \\
&&u_3(x) &=& 0
\end{array}
$$
A state $x$ in $\cN^\eps(n,\alpha,\beta)$ will thus have between $(1\pm 6\eps)n^{1-\alpha}$ empty queues, between $(1\pm 6\eps)n^{1-\alpha + \beta}$
queues of length~0 or~1 -- most of which will then have length~1 -- and the remaining queues all of length~2.  As $\beta < \alpha$, this implies that
the proportion of queues of length exactly~2 tends to~1 as $n \to \infty$.

The following result is taken from Theorems~6.1 and~1.2 of~\cite{BFL} -- in the application of Theorem~1.2, we take $t \ge n^2$ so that $t/3200n^{1+\beta} > \frac14 \log^2 n$
for $n$ sufficiently large; as is remarked after Theorem~10.5 of~\cite{BFL}, the conclusion is valid for the full range of $\eps$ stated above.  Note that the results
in~\cite{BFL} are stated for the discrete-time version of the model; we have derived results for the continuous-time version as described above, and bounded above the error
probabilities involved in the translation by $e^{-\frac14 \log^2 n}$.

\begin{theorem} [Brightwell, Fairthorne and Luczak] \label{thm:supermarket}
Given $n$ and $\eps(n)$ as above, and $\alpha$ and $\beta$ satisfying the inequalities in~(\ref{assume}), let $(Y(t))$ be a copy of the supermarket
process with parameters $(n,d,\lambda)$, where $\lambda = 1 - n^{-\alpha}$ and $d = n^\beta$, in equilibrium.  Then, for $n$ sufficiently large,
$$
\Pr\left( Y(t) \notin \cN^\eps (n,\alpha,\beta) \right) \le e^{-\frac14 \log^2 n}.
$$

Moreover, if $(X(t))$ is a copy of the supermarket process with $X(0) \in \cN^{\eps/6}(n,\alpha, \beta)$, then
$$
\Pr\left( X(t) \notin \cN^\eps (n,\alpha,\beta) \mbox{ for some } t \in [0,\frac{1}{4n} e^{\frac13 \log^2 n}] \right) \le 2 e^{-\frac14 \log^2 n},
$$
and, for $n$ sufficiently large and $t \ge n$,
$$
d_{TV} \big( \cL(X(t)), \Pi \big) \le 7n e^{-\frac14 \log^2 n},
$$
where $\Pi$ denotes the equilibrium distribution.
\end{theorem}

We will focus on the number $V(x) = n(1-u_1(x))$ of empty queues, and investigate how well $V(Y(t))$ is concentrated around its mean for an equilibrium
copy $(Y(t))$ of the supermarket process with parameters as above.  For $(Y(t))$, the mean total arrival
rate is $\lambda n = n(1-n^{-\alpha})$, while the mean total departure rate is the expected number of non-empty queues, which is
$n - \E V(Y(t)) = n \E u_1(Y(t))$.  In equilibrium, the mean arrival rate is equal to the mean departure rate, so we have
$\E V(Y(t)) = n^{1-\alpha}$.  States $x$ in $\cN^\eps(n,\alpha,\beta)$ thus all have $V(x)$ within $6\eps n^{1-\alpha}$ of the mean $\E V(Y(t))$.
We shall prove that
we have concentration of $V(Y(t))$ within $n^{(1-\beta)/2}$ of its mean $n^{1-\alpha}$: as $(1-\beta)/2 < 1-\alpha$, this is a sharper concentration result
than is given by Theorem~\ref{thm:supermarket}.
It is remarked in~\cite{BFL} that the proof of Theorem~\ref{thm:supermarket} goes through for $\eps = n^{-\delta}$, where $\delta$ is
sufficiently small: the implied result is still not as strong as we shall prove here, since $\delta$ would have to be strictly less than the minimum
of several quantities, one of which is $(1-\alpha) - (1-\beta)/2$, and this is the smallest of the quantities for part, but not all, of our range -- more details
can be found in the arXiv version of~\cite{BFL}.

The supermarket model is also used as an example by Luczak in~\cite{l08}, to illustrate the concentration inequality derived in that paper.
That analysis is based on a natural coupling $(X(t),Z(t))$ of two copies of the supermarket model with the same parameters, which we now describe -- our proof
is also based on this coupling.
In the coupling, the arrival times for the two processes are identical, and on an arrival the same ordered $d$-tuple of queues is inspected in the
two processes.  For each of the queues, a ``potential departure'' from the queue occurs at rate~1: for each of the copies of the process, if the
queue is non-empty at the time of the potential departure, a
customer is served and leaves the system at that time.  If states $x$ and $z$ are adjacent (i.e., one can be reached from the other by
a single transition), then they differ by~1 in exactly one queue.  For an adjacent pair $(x,z)$, we call the queue where the two states differ the
{\em unbalanced queue}, and we say that $x > z$ if the unbalanced queue is longer in $x$ than in $z$.  If $X(0) = x$ and $Z(0) = z$, where $x$ and $z$
are adjacent with $x > z$, then we claim that, under the coupling, the pair $(X(t),Z(t))$ remains adjacent, with $X(t) > Z(t)$, until the two copies
coalesce.  On a departure from the unbalanced queue, coalescence occurs if that queue is already empty in $Z(t)$, and otherwise the queue remains
unbalanced.  If an arriving customer joins the unbalanced queue in $x$, they join that queue in $z$ as well.  It is also possible that an arriving
customer joins the unbalanced queue in $z$ and a different queue in $x$; the states remain adjacent, but a different queue becomes unbalanced.

The analysis in~\cite{l08} assumes that $d$ is a constant, but it is easy to see that the proof there gives concentration around the mean only to
within order $\sqrt{nd}$.  For small enough $\beta$ and $\alpha$, this is still a stronger result than that implied by Theorem~\ref{thm:supermarket},
but the result we prove below always gives stronger concentration.

\begin{theorem} \label{concentration}
Let $(Y(t))$ be a copy of the supermarket model with parameters $(n,d,\lambda)$, in equilibrium, where $\lambda= 1 - n^{-\alpha}$, $d=n^\beta$,
and $(\alpha, \beta)$ satisfy~(\ref{assume}).
Then, for $n$ sufficiently large, and any $m$,
$$
\Pr \big( |V(Y(t)) -n^{1-\alpha}| > m \big) \le 2 \exp \left( - \frac{m^2 d}{112 n} \right) + \exp\left( -\frac15 \log^2 n\right).
$$
In particular, if $c=c(n)$ is positive, with $c = o(\log n)$, and $n$ is sufficiently large, we have
$$
\Pr \big( |V(Y(t)) -n^{1-\alpha}| > c n^{(1-\beta/2)} \big) \le 3 e^{- c^2/112}.
$$
\end{theorem}

Our proof will be an application of Theorem~\ref{thm.concb-continuous} to the (well-behaved) continuous-time chain $(Y(t))$.  We give the proof below,
postponing the proof of a key lemma.

\begin{proof}
We shall apply Theorem~\ref{thm.concb-continuous} to the supermarket model with the given parameters, with $f(x) = V(x)$, the number of empty queues
in state $x$.  We set $\eps = \eps(n) = 1/\log n$, and let $\widehat S$ be the set $\cN^\eps(n,\alpha,\beta)$.  We then consider starting in a state
$X(0) \in \cN^{\eps/6}(n,\alpha,\beta)$.
Note that, for any state $x$, the total
transition rate $q_x$ out of state $x$ is at most $2n$.  In order to apply the result, we need to identify a constant $\wbeta$ satisfying~(\ref{eq.coupling-cts}),
and a function $\weta (s)$ satisfying~(\ref{ADB-cts-key-bnd}).  We obtain these by analysing the natural coupling of two copies of the chain described above.

Accordingly, we consider a pair of copies $(X,Z)$ starting in adjacent states $x$ and~$z$ with $x > z$, evolving according to the coupling described above, so that
the two copies remain adjacent until coalescence.  At any time~$t$, $X(t)$ and $Z(t)$ are adjacent or equal, and if they are adjacent then there is one
unbalanced queue.  Let $L(t)$ denote the length of the longer unbalanced queue, or 0 if there is none, at time~$t$: the random process $(L(t))$ is thus a
function of the coupled pair $(X(t),Z(t))$, taking values in $\Z_+$, making steps up and down by~1, until it steps from~1 to~0 and remains at~0 thereafter.
For a pair $(x,z)$ of adjacent initial states, and $s \ge 0$, let $a_1(s) = a_1^{xz}(s)$ denote the
probability that $L(s)$ is equal to~$1$.

For an initial adjacent pair of states $(X(0),Z(0)) =(x,z)$ with $x > z$, and any time~$s$, the difference $V(X(s)) - V(Z(s))$ is equal to~1 when $L(s) = 1$ and~0 otherwise,
so the quantity $(\widehat P^s V)(x) - (\widehat P^s V)(z) = \E_x V(X(s)) - \E_z V(Z(s))$ is exactly equal to $a_1^{xz}(s)$.  In particular, we thus have
$|(\widehat P^s V)(x) - (\widehat P^s V)(z) | \le 1$, so we may take $\wbeta = 1$.

If $x \in \wS = \cN^\eps(n,\alpha,\beta)$, and $z$ is adjacent to $x$, then either $z \in \cN^{2\eps}(n,\alpha,\beta)$, or $z > x$ and $z$ has a queue of
length~3; in the latter case, the transition from $x$ to $z$ is an arrival in which only queues of length~2 are inspected, and the rate of such arrivals
from any state $x \in \cN^\eps(n,\alpha,\beta)$ is at most $(1-\frac12 n^{-\alpha+\beta})^d \le \exp (-\frac12 n^{-\alpha + 2\beta}) \le \exp(-\frac14 \log^2 n)$, for
$n$ sufficiently large.  As the total transition rate out of any state $x$ is at most $2n$, we have, a little crudely,
\begin{equation} \label{Q}
\sum_{z\in S} Q(x,z) \big( (\widehat P^s V)(x) -  (\widehat P^s V)(z) \big)^2 \le \exp(-\frac14 \log^2 n) + 2n \max_{z \sim x}
a_1^{xz}(s)^2,
\end{equation}
where the maximum is over initial pairs $(x,z)$ where $z$ is adjacent to $x$ and both are in $\cN^{2\eps}(n,\alpha,\beta)$.

\begin{lemma} \label{lem.bound}
\begin{equation} \label{rho}
a_1^{xz}(s) \le e^{-(d+2)s/2} + \frac{4}{d+2} e^{-s/(d+2)} + 2 e^{-\frac14 \log^2 n},
\end{equation}
whenever $x$ and $z$ are adjacent states in $\cN^{2\eps}(n,\alpha,\beta)$, and $s \le e^{\frac15 \log^2 n}$.
\end{lemma}

We postpone a proof of Lemma~\ref{lem.bound} until later, but we now indicate briefly what the terms in~(\ref{rho}) signify.  The final term accounts for the possibility
of leaving the set $\cN^{12\eps} (n,\alpha,\beta)$.  The first term accounts for the probability that $L(0)=1$ and no transition has occurred before time~$s$ to change the
length of the unbalanced queue.  The second term is the main term; roughly speaking, it arises from showing that coalescence occurs in time of order~$d$, and, conditional on
coalescence not occurring before time~$s$, the probability that $L(s) = 1$ is of order $1/d$.





We continue with the main thread of our proof, assuming the bound (\ref{rho}) in Lemma~\ref{lem.bound}.  Given this bound, we may take $\widehat{\alpha}(s)$
in~(\ref{ADB-cts-key-bnd}) to be
\begin{eqnarray*}
&& \exp( - \frac14 \log^2 n) + 2n \Big( e^{-(d+2)s/2} + \frac{4}{d+2} e^{-s/(d+2)} + 2 e^{-\frac14 \log^2 n} \Big)^2 \\
&\le& \exp( - \frac14 \log^2 n) + 6n \Big( e^{-(d+2)s} + \frac{16}{(d+2)^2} e^{-2s/(d+2)} + 4 e^{-\frac12 \log^2 n}\Big) \\
&\le& 2\exp( - \frac14 \log^2 n) + 6n e^{-(d+2)s} + \frac{96n}{(d+2)^2} e^{-2s/(d+2)},
\end{eqnarray*}
and
$$
\widehat \alpha_t = \int_0^t \widehat \alpha(s) \, ds
\le 2t e^{-\frac14 \log^2 n} + \frac{6n}{d+2} + \frac{48n}{d+2} +  \le \frac{55n}{d+2},
$$
for $t \le e^{\frac15 \log^2 n}$ and $n$ sufficiently large.

Now consider starting at any state $X(0) \in \cN^{\eps/6}(n,\alpha,\beta)$, and let $A_t$ be the event that the process stays within
$\widehat S = \cN^\eps(n,\alpha,\beta)$ until time~$t$.  For $t \le e^{\frac15 \log^2 n}$, Theorem~\ref{thm:supermarket} tells us that the
probability of $A_t^c$ is at most $2e^{-\frac14 \log^2 n}$.  We now apply
Theorem~\ref{thm.concb-continuous}, and obtain that, for any $m \ge 0$, and any $t \le e^{\frac15 \log^2 n}$,
\begin{eqnarray*}
\lefteqn{\Pr_{X(0)} \left( \left\{ |V(X(t)) - \widehat{P}^tV(X(0)) | > m \right\} \cap A_t \right)} \\
&\le& 2 \exp \left( -\frac{m^2}{110n/(d+2) + 2m/3} \right).
\end{eqnarray*}
For $m < n/d$, we have $\frac{m^2}{110n/(d+2) + 2m/3} \ge \frac{m^2d}{111n}$;
for $m \ge n/d$, we have that $\frac{m^2}{110n/(d+2) + 2m/3} \ge \frac{m}{111} \ge \frac14 \log^2 n$ for $n$
sufficiently large (depending on $\beta$).  Therefore we have, for any $m$ and $t \le e^{\frac15 \log^2 n}$,
\begin{eqnarray*}
\lefteqn{\Pr_{X(0)} \left( |V(X(t)) - \widehat{P}^tV(X(0)) | > m \right) } \\
&\le& 2 \exp \left( -\frac{m^2d}{111n} \right) + 4 \exp( - \frac14 \log^2 n ).
\end{eqnarray*}

The final part of Theorem~\ref{thm:supermarket} tells us that, for $t \ge n$ and any $X(0)$ in $\cN^{\eps/6}(n,\alpha,\beta)$, the total variation
distance between ${\mathcal L}_t^{X(0)}$ and the equilibrium distribution is at most $7n e^{-\frac14 \log^2 n}$.  Thus, choosing $t$ so that
$n \le t \le e^{\frac15 \log^2 n}$, we see firstly that $|\E V(X(t)) - \E V(Y(t))| \le 7n^2 e^{-\frac14 \log^2 n} \le 1$, for $n$ sufficiently large,
where $Y(t)$ is a copy in equilibrium.  Recalling that $\E V(Y(t)) =n^{1-\alpha}$, this yields that
\begin{eqnarray*}
\lefteqn{\Pr_{X(0)} \left( |V(X(t)) - n^{1-\alpha} | > m \right)} \\
&\le& 2 \exp \left( -\frac{(m-1)^2d}{111n} \right) + 4 \exp( - \frac14 \log^2 n )\\
&\le& 2 \exp \left( -\frac{m^2d}{112n} \right) + 4 \exp( - \frac14 \log^2 n ),
\end{eqnarray*}
for $m \ge 224$, and the inequality also holds trivially for $m < 224$ provided $n$ is sufficiently large.  We then further deduce that
$$
\Pr \left( |V(Y(t)) - n^{1-\alpha} | > m \right) \le 2 \exp \left( -\frac{m^2d}{112n} \right) + 8n^2 \exp( - \frac14 \log^2 n ),
$$
which implies the claimed result.
\end{proof}

It remains to prove Lemma~\ref{lem.bound}.  For this, we will use the following technical lemma, a variant of Gronwall's Lemma.

\begin{lemma} \label{gronwall-variant}
If $f(x)$ is continuous on $[0,\tau]$ and, for some $\gamma > 0$,
$$
f(s) \le f(t) - \gamma \int_t^s f(u) \, du \quad \mbox{ for all } 0 \le t \le s \le \tau,
$$
then $f(s) e^{\gamma s}$ is non-increasing on $[0, \tau]$ and so $f(s) \le f(0) e^{-\gamma s}$ for all $s\in [0,\tau]$.
\end{lemma}

\begin{proof}
Suppose for a contradiction that $f(s) e^{\gamma s} > f(t) e^{\gamma t}$, where $0 \le t < s \le \tau$.
Now take $t'$ to be the maximum value in $[t,s)$ such that $f(t') e^{\gamma t'} = f(t) e^{\gamma t}$.  By continuity, it follows that
$f(u) e^{\gamma u} \ge f(t') e^{\gamma t'}$ for all $u \in [t',s]$.

Applying the hypothesis to the times $t'$ and $s$, we obtain that
$$
f(s) \le f(t') - \gamma \int_{t'}^s f(u) \, du \le f(t') \left[ 1 - \gamma \int_{t'}^s e^{-\gamma(u-t')} \, du \right]
$$
$$
= f(t') e^{-\gamma (s-t')} = f(t) e^{-\gamma (s-t)},
$$
which gives the desired contradiction.
\end{proof}

\begin{proof} [Proof of Lemma~\ref{lem.bound}]
We need to show that, for $n$ sufficiently large, (\ref{rho}) holds whenever $x$ and $z$ are adjacent states in
$\cN^{2\eps}(n,\alpha,\beta)$, and $s \le e^{\frac15 \log^2 n}$.
Fix adjacent states $x$ and $z$ in $\cN^{2\eps}(n,\alpha,\beta)$ with $x > z$.  Let $\px$ be the probability that
either copy of the chain exits $\cN^{12\eps}(n,\alpha,\beta)$ before time $e^{\frac15 \log^2 n}$; by Theorem~\ref{thm:supermarket} (with
$\eps/6$ replaced by $2 \eps$), we have
\begin{equation} \label{pexit}
\px \le 2 e^{-\frac14 \log^2 n}.
\end{equation}

Until the copies coalesce, there is an unbalanced queue, with length $L(t)$ in $X(t)$ and length $L(t) -1$ in $Z(t)$; whatever the
length of the unbalanced queue, the rate of departures from the unbalanced queue is~1, and a departure would lead to coalescence
if $L(t)=1$, or reduce the unbalanced queue lengths by~1 if $L(t) \ge 2$.  If $L(t)=2$, then an arrival does not change $L(t)$ unless the process
leaves $\cN^{12\eps} (n,\alpha,\beta)$.  If $L(t)=1$, then an
arrival increases the length of the unbalanced queue exactly when the arriving customer joins the (empty) unbalanced queue in $Z(t)$.  The
rate $R_t$ of such arrivals depends on the number of empty queues in $Z(t)$; we could give an exact expression, but we content
ourselves with loose bounds that are easy to derive.  The rate $R_t$ is certainly at most the rate of
arrivals in which the unbalanced queue is inspected, which is equal to $\lambda n (1 - (1-1/n)^d) \le \lambda d \le d$.  Any arriving
customer who inspects the unbalanced queue and no other empty queue in $Z_t$ -- we call such an arrival a {\em critical arrival} -- will join
the unbalanced queue and thus cause $L(t)$ to increase from~1 to~2: while $Z(t)$ is in $\cN^{12\eps}(n,\alpha,\beta)$, the
proportion $p_t$ of empty queues in $Z(t)$ is at most $2 n^{-\alpha}$, and so the rate of critical arrivals is
$$
\lambda n \Big[ \big(1-p_t + \frac 1n\big)^d - (1-p_t)^d \Big] \ge \lambda n \frac dn (1-p_t)^{d-1} \ge \lambda d (1-p_t d)
$$
$$
\ge \frac34 d (1-2n^{-\alpha+\beta}) \ge \frac12 d,
$$
for $n$ sufficiently large.  Hence $R_t \ge d/2$ as long as $L(t) = 1$ and $Z(t)$ is in $\cN^{12\eps}(n,\alpha,\beta)$.

In summary, if $L(t)=1$, then $L$ decreases at rate~1, and increases at a rate $R_t$ between $d/2$ (provided $Z(t) \in \cN^{12\eps}(n,\alpha,\beta)$) and $d$.
If $L(t)=2$, then $L$ decreases at rate~1.

We consider the coupled pair of chains up to time $\tau = e^{\frac15 \log^2 n}$, starting from the initial state $(x,z)$.
Extending our earlier notation, we let $a_j(t) = \Pr(L(t) =j)$ and
$a_{\ge j}(t) = \Pr(L(t) \ge j)$, for $j=1,2$.

Applying Dynkin's formula, as well as the facts we have established about the rates of transitions for $L(t)$, we have that, for $t < s$,
\begin{equation} \label{eqa3}
a_{\ge 1}(s) = a_{\ge 1}(t) - \int_t^s a_1(u) \, du;
\end{equation}
\begin{equation} \label{eqa2}
a_{\ge 2}(s) = a_{\ge 2}(t) + \int_t^s \Big( - a_2(u) + \E [ 1_{L(u)=1} R_u ] \Big) du;
\end{equation}
\begin{equation} \label{eqa1}
a_1(s) = a_1(t) + \int_t^s \Big( - a_1(u) + a_2(u) - \E [ 1_{L(u)=1} R_u ] \Big) du.
\end{equation}
We also note that, for $u \le \tau$,
\begin{equation} \label{da1}
d a_1(u) \ge \E [ 1_{L(u)=1} R_u ] \ge \frac12 d (a_1(u) - \px).
\end{equation}

Recall that $\px$ is the probability that either copy leaves the set $\cN^{12\eps} (n,\alpha,\beta)$ before time $\tau$.  Note that
\begin{equation} \label{a2}
a_2(t) \le a_{\ge 2}(t) \le a_2(t) + \px
\end{equation}
for all $t \in [0,\tau]$.

Our aim is to prove the upper bound~(\ref{rho}) on $a_1(s) = a_1^{xz}(s)$ for all $s \le \tau$.
We shall establish that $a_{\ge 2}(s)$ is of order $d a_1(s)$, for $s$ larger than about $1/d$, and that $a_{\ge 1}(s)$ falls off at least
as fast as roughly $e^{-s/d}$.  This implies that the time to coalescence is approximately dominated by an exponential random variable with mean~$d$,
while, for $t$ greater than about $1/d$, conditional on coalescence not having occurred, the probability that $L(t) = 1$ is
of order $1/d$; these bounds  will yield~(\ref{rho}).
In our formal analysis, we shall use Lemma~\ref{gronwall-variant} several times.

We first consider the function
$$
r(s) = a_{\ge 2}(s) - (d+1) a_1(s) - \px.
$$
From (\ref{eqa2}), (\ref{eqa1}), (\ref{da1}) and (\ref{a2}), we have
\begin{eqnarray*}
r(s) &=& r(t) + \int_t^s \Big( (d+2) ( - a_2(u) + \E [ 1_{L_u=1} R_u ]) + (d+1) a_1(u) \Big) du \\
&\le & r(t) + \int_t^s \Big( (d+2) (-a_{\ge 2}(u) + \px + d a_1(u)) + (d+2) a_1(u) \Big) du \\
&= & r(t) - (d+2) \int_t^s r(u) \, du,
\end{eqnarray*}
and therefore from Lemma~\ref{gronwall-variant} we have that $r(s) \le r(0) e^{-(d+2)s} \le e^{-(d+2)s}$.  Rearranging, we obtain that, for $s \le \tau$,
\begin{equation} \label{eqa1s}
a_1(s) \ge \frac{1}{d+2} \left[ a_{\ge 1}(s) - e^{-(d+2)s} - \px \right].
\end{equation}
This tells us that, roughly speaking, after a lead-in time of order $1/d$, the probability $a_1(s)$ that the unbalanced queue has length~1
is at least about $1/(d+2)$ times the probability $a_{\ge 1}(s)$ that coalescence has not occurred.

The next step is to use the above to show that $a_{\ge 1}(s)$ falls off at least
as fast as roughly $e^{-s/d}$.  We see from (\ref{eqa3}) and (\ref{eqa1s}) that
$$
a_{\ge 1}(s) \le a_{\ge 1}(t) - \frac{1}{d+2} \int_t^s \Big( a_{\ge 1}(u) - e^{-(d+2)u} - \px \Big) du
$$
Now we consider the function
$$
v(s) = a_{\ge 1}(s) + \frac{2}{(d+2)^2} e^{-(d+2)s} - \px.
$$
We have
\begin{eqnarray*}
\lefteqn{\int_t^s \Big( a_{\ge 1}(u) - e^{-(d+2)u} - \px \Big) du} \\
&=& \int_t^s \Big( v(u) - \Big(1 + \frac{2}{(d+2)^2}\Big) e^{-(d+2)u} \Big) du \\
&\ge& \int_t^s v(u) \, du - \frac{2}{d+2} (e^{-(d+2)t} - e^{-(d+2)s}),
\end{eqnarray*}
and so
\begin{eqnarray*}
v(s) &\le& v(t) + \frac{2}{(d+2)^2} (e^{-(d+2)s} - e^{-(d+2)t}) - \frac{1}{d+2} \int_t^s v(u) \, du \\
&&\mbox{} + \frac{2}{(d+2)^2} (e^{-(d+2)t} - e^{-(d+2)s}) \\
&=& v(t) - \frac{1}{d+2} \int_t^s v(u) \, du.
\end{eqnarray*}
Therefore, by Lemma~\ref{gronwall-variant},  $v(s) \le v(0) e^{-s/(d+2)} \le 2 e^{-s/(d+2)}$, and we deduce that, for $s \le \tau$,
\begin{equation} \label{rft}
a_1(s) + a_{\ge 2}(s) = a_{\ge 1}(s) \le 2 e^{-s/(d+2)} + \px.
\end{equation}

Finally, we show that $a_1(s)$ is at most about $2/d$ times $a_{\ge 1}(s)$.
We apply Lemma~\ref{gronwall-variant} to the function
$$
q(s) = \frac d2 a_1(s) - a_{\ge 2}(s) - \frac d2 \px.
$$
From (\ref{eqa2}), (\ref{eqa1}) and (\ref{da1}), we have, for $t< s$,
\begin{eqnarray*}
q(s) &=& q(t) + \int_t^s \Big( -\frac d2 a_1(u) + \Big(\frac d2 +1\Big) \big(a_2(u) - \E [ 1_{L_u=1} R_u ]\big) \Big) du \\
&\le& q(t) + \Big(\frac d2 +1\Big) \int_t^s \Big( a_{\ge 2}(u) - \frac d2 (a_1(u) - \px) \Big) du \\
&\le& q(t) - \Big(\frac d2 +1\Big) \int_t^s q(u) \, du.
\end{eqnarray*}
We obtain that $q(s) \le q(0) e^{-(d/2 + 1)s} \le \frac d2 e^{-(d/2+1)s}$, so
$$
\frac d2 a_1(s) - a_{\ge 2}(s) \le \frac d2 e^{-(d/2+1)s} + \frac d2 \px.
$$

Summing with (\ref{rft}) yields, for $s \le \tau$,
$$
\Big(\frac d2 + 1\Big) a_1(s) \le \frac d2 e^{-(d/2+1)s} + 2 e^{-s/(d+2)} + \Big(\frac d2 +1\Big) \px,
$$
and so
$$
a_1(s) \le e^{-(d/2+1)s} + \frac{4}{d+2} e^{-s/(d+2)} + \px,
$$
which is the required bound.
\end{proof}

Theorem~\ref{concentration} gives concentration of the random variable $V(Y(t))$ about its mean within order $\sqrt{n/d}$.
We note that no such bound can be shown if we rely only on the fact that $V(x)$ is a Lipschitz function of the state space.  Indeed, coalescence of the
Markov chain takes time of order $d$, and the results of~\cite{l08}, \cite{Ollivier} or~\cite{Paulin} would only give concentration within order
$\sqrt {nd}$ of the mean.

We indicate briefly why we expect that concentration of $V(Y(t))$ within order $\sqrt{n/d}$ of its expectation is best possible.  If we look at the
transitions of the process over a time period $[0,t]$ of length $t=nd$, the number of arrivals has fluctuations of order $\sqrt {nd}$.
The analysis in the proof of Theorem~\ref{concentration} and Lemma~\ref{lem.bound} suggests that a positive proportion of the extra customers will still be in the
system at the end of the period, and approximately a proportion $1/d$ of these will be in queues of length~1, so that
fluctuations of order $\sqrt{nd}$ in the number of arrivals during $[0,t]$ result in fluctuations of order $\sqrt{n/d}$ in the number of empty queues at time~$t$.


We believe that a similar proof can be used to show sharp concentration of measure results for the supermarket model in the range where $\lambda < 1$ and $d \ge 2$
are fixed constants.  Here it is known that the proportion of queues of length at least $k$, for each $k$ fixed, is close to
$v(k) = \lambda^{(d^k-1)/(d-1)}$ in equilibrium.  For $k \ge 1$, let $f_k(x)$ be the number of queues at least $k$; for $x$ any state with approximately
$nv(k)$ queues of length $k$ for each $k$, and $k$ large, the quantity $Q(x,y) ((\widehat{P}^s f_k) (x) - (\widehat{P}^s f_k) (y))^2$ is dominated by
terms where the transition from $x$ to $y$ creates an unbalanced queue of length~$k$, and there is no departure from the unbalanced queue before
time $s$.  Thus we may take $\widehat{\alpha}(s)$ at most some constant times $n v(k) e^{-2s}$, and obtain concentration within order $\sqrt{n v(k)}$
for $f_k(x)$ in equilibrium, at least for $k$ large.

\medskip
{\bf Acknowledgements.}  MJL thanks Monash University for their
kind hospitality while part of this work was accomplished.  GRB thanks the University of Melbourne for their
equally kind hospitality while a different part of the work was accomplished.


\begin{thebibliography}{99}

\bibitem{Anderson}
W.J.\ Anderson (1991). {\em Continuous-time Markov chains - An applications-oriented approach}.
Springer Series in Statistics.  Springer Verlag, New York Inc.

\bibitem{BFL}
G.R.\ Brightwell, M.\ Fairthorne and M.J.Luczak (2018). The supermarket model with bounded queue lengths in equilibrium.
{\em J.\ Stat.\ Phys.} {\bf 173}, 1149--1194.

\bibitem{BL2013}
G.R.\ Brightwell and M.J.\ Luczak (2013).
 A fixed-point approximation for a routing model in equilibrium.
Preprint, {\tt arXiv: 1306.5002}.

\bibitem{Chv}
V.\ Chv\'atal (1979).
The tail of the hypergeometric distribution.
{\em Discr.\ Math.} {\bf 25}, 285--287.

\bibitem{ch}
J.P. Crametz and P.J. Hunt (1991). A limit result respecting graph structure for a fully connected loss network with alternative routing. {\em Ann. Appl. Probab.} {\bf 1}, 436--444

\bibitem{ds87}
P.\ Diaconis and M.\ Shahshahani (1987). Time to reach stationarity in the Bernoulli--Laplace diffusion model.
{\em SIAM J.\ Math.\ Anal.\/} {\bf 18}, 208--218.

\bibitem{DLS94}
P.J.\ Donnelly, P.\ Lloyd and A.\ Sudbury (1994).
Approach to stationarity of the Bernoulli--Laplace diffusion model.
{\it Adv.\ Appl.\ Probab.\/} {\bf 26}, 715--727.

\bibitem{EN}
A.\ Eskenazis and E.\ Nestoridi (2020).
Cutoff for the Bernoulli-Laplace urn model with $o(n)$ swaps.
{\it Ann.\ Inst.\ H.\ Poincar\'e, Probab.\ Statist.\/}  {\bf 56}, 2621--2639.

\bibitem{Feller68}
W.\ Feller (1968).
{\it An introduction to probability theory and its applications,\/} vol~1, 3rd edn.
Wiley, New York.

\bibitem{vdG95}
S.\ van de Geer (1995).
Exponential inequalities for martingales, with application to maximum likelihood estimation for counting processes.
{\it Ann. Statist.\/} {\bf 23}, 1779--1801.

\bibitem{GLP}
R.\ Gheissari, E.\ Lubetzky and Y.\ Peres (2018). Concentration inequalities for polynomials of contracting Ising modela.
{\it Electron.\ Commun.\ Probab.\/} {\bf 23}, no.76, 1--12.

\bibitem{GHK}
R.J.\ Gibbens, P.J.\ Hunt and F.P.\ Kelly (1990). Bistability in communication networks.
In {\it Disorder in physical systems} (G.R. Grimmett and D.J.A. Welsh, eds.) 113--128, OUP.

\bibitem{gm}
C. Graham and S. M\'el\'eard (1993). Propagation of chaos for a fully connected network with alternate routing.
{\em Stoch. Proc. Appl.} {\bf 44}, 159--180.



\bibitem{HY84}
H.W.\ Hethcote and J.A.\ Yorke (1984).  {\it Gonorrhea Transmission Dynamics and Control.}  Lecture Notes in Biomathematics,
vol. 56, Springer, Berlin Heidelberg New York.

\bibitem{hoeffding}
W. Hoeffding (1963). Probability inequalities for sums of bounded random variables, {\em J.\ Am.\ Statist.\ Assoc.} {\bf 58} , 13--30.

\bibitem{JWS}
P.\ Jordan, G. Webbe and R.F.\ Sturrock (1993). {\it Human schistosomiasis.}  Wallingford: CAB International.

\bibitem{llp}
D.A.\ Levin, M.J.\ Luczak and Y.\ Peres (2010). Glauber dynamics for the mean-field Ising model:
cut-off, critical power law and metastability.  {\em Probab.\ Theory Relat.\ Fields} {\bf 146}, 223--265.

\bibitem{LPW}
D.A.\ Levin, Y.\ Peres, and E.L.\ Wilmer (2017).
{\it Markov chains and mixing times.}  American Mathematical Society, Providence, RI.  Second edition.

\bibitem{l08}
M.J.\ Luczak (2008).
Concentration of measure and mixing of Markov chains.
{\em Discrete Mathematics and Theoretical Computer Science}
(Invited paper, {\em Proc.\ 5th Colloq.\ Mathem.\ Comp.\ Sci.} 95--120;\,
{\tt arXiv: 0809.4856}.

\bibitem{L2012}
M.J. Luczak (2012).
A quantitative differential equation approximation for a routing model.
Unpublished, {\tt arXiv:1212.3231}.

\bibitem{LMcD2006}
M.J.\ Luczak and C.\ McDiarmid~(2006).
On the maximum queue length in the supermarket model.
{\em  Ann.\ Probab.\/} {\bf 34}, 493--527.

\bibitem{Marton96}
K.\ Marton~(1996).
A measure concentration inequality for contracting Markov chains.
{\em Geom.\ Funct.\ Anal.\/} {\bf 6}, 556--571.

\bibitem{cmcd98}
C.\ McDiarmid (1998).
{\em Concentration}.
In: Probabilistic Methods for Algorithmic Discrete Mathematics
(M. Habib, C. McDiarmid, J. Ramirez, B. Reed, eds.), pp.\ 195--248;  Springer-Verlag, Berlin.

\bibitem{m}
M. Mitzenmacher (1996). The power of two choices in randomized load-balancing. PhD dissertation, Berkeley.

\bibitem{mitz} M. Mitzenmacher (1996) Load balancing and density dependent
jump Markov processes.  {\it Proc.\ 37th Ann.\ Symp.\ Found.\ Comp.\ Sci.} 213--222.

\bibitem{Ollivier}
Y.\ Ollivier (2009).
Ricci curvature  of  Markov  chains  on  metric  spaces.
{\it J.\ Funct.\ Anal.\/} {\bf 256}, 810–-864.

\bibitem{Paulin}
D.\ Paulin (2016).
Mixing and concentration by Ricci curvature.
{\it J. Funct. Anal.\/} {\bf 270}, 1623–-1662.

\bibitem{Quine70}
M.P.\ Quine (1970).
The multi-type Galton--Watson process with immigration.
{\it J.\  Appl.\ Probab.\/} {\bf 7}, 411--422.

\bibitem{rrjw}
M.\ Rabinovich, A.\ Ramdas, M.I.\ Jordan and M.J.\ Wainwright (2020).
Function-specific mixing times and concentration away from equilibrium.
{\it Bayesian Anal.\/} {\bf 15}, 505--532.

\ignore{
\bibitem{RLT75}
R.L.\ Tweedie (1975).
Sufficient conditions for regularity, recurrence and ergodicity
of Markov processes.
{\em Math.\ Proc.\ Cam.\ Phil.\ Soc.} {\bf 78}, 125--136.
}

\bibitem{Veysseire}
L.\ Veysseire (2012).
Coarse Ricci curvature for continuous-time Markov processes.
Unpublished, {\tt arXiv:1202.0420}.

\bibitem{VDK}
N.\ Vvedenskaya, R.\ Dobrushin and F.\ Karpelevich (1996).
Queuing system with selection of the shortest of two queues: anasymptotic approach.
{\em Probl.\ Inf.\ Transm.} {\bf 32}, 15--27.

\bibitem{WH}
S.\ Watanabe and M.\ Hayashi (2017).
Finite-length analysis on tail probability for Markov chain and application to simple hypothesis testing.
{\em Ann.\ Appl.\ Probab.} {\bf 27}, 811--845.

\end{thebibliography}
\end{document}